\documentclass[11pt]{amsart}
\usepackage[nobysame,abbrev,alphabetic]{amsrefs}
\usepackage{amssymb}
\usepackage{amsmath}
\usepackage[all]{xy}
\usepackage{verbatim}
\usepackage{float}
\restylefloat{figure}
\usepackage{tikz}

\usepackage{kantlipsum} 

\setlength{\textwidth}{\paperwidth}
\addtolength{\textwidth}{-2in}
\calclayout

\usepackage[colorlinks=true, pdfstartview=FitV, linkcolor=blue,
  citecolor=blue, urlcolor=blue,pagebackref=false]{hyperref}

\DeclareMathOperator{\Img}{Im}

\addtolength{\textwidth}{20pt}
\addtolength{\evensidemargin}{-10pt}
\addtolength{\oddsidemargin}{-10pt}
\addtolength{\textheight}{15pt}


\raggedbottom

\newtheorem{thm}{Theorem}[section]
\newtheorem{claim}[thm]{Claim}

\newtheorem{lem}[thm]{Lemma}
\newtheorem{prop}[thm]{Proposition}
\newtheorem{defin}[thm]{Definition}

\newtheorem{examples}[thm]{Examples}
\newtheorem{rem}[thm]{Remark}
\swapnumbers

\numberwithin{equation}{section}

\newcommand\Lref[1]{{Lemma~\ref{#1}}}%
\newcommand\Clref[1]{{Claim~\ref{#1}}}%
\newcommand\Pref[1]{{Proposition~\ref{#1}}}%
\newcommand\Cref[1]{{Corollary~\ref{#1}}}%
\newcommand\Tref[1]{{Theorem~\ref{#1}}}%
\newcommand{\nati}[1]{{\color{blue}\bf #1}}
\newcommand{\michael}[1]{{\color{red}\bf #1}}

\title{Expander Graphs -- Both Local and Global}
\makeatletter
\author[Michael Chapman]{Michael Chapman}
\address{Michael Chapman\hfill\break Einstein institute of Mathematics,\hfill\break The Hebrew University, Jerusalem 91904, Israel.}
\email{michael.chapman@mail.huji.ac.il}

\author[Nati Linial]{Nati Linial }
\address{Nati Linial\hfill\break Department of Computer Science,\hfill\break The Hebrew University, Jerusalem 91904, Israel. }
\email{nati@cs.huji.ac.il }
\thanks{The second author is supported by ERC grant 339096 ``High dimensional combinatorics".}

\author[Yuval Peled]{Yuval Peled}
\address{Yuval Peled\hfill\break Courant Institute of Mathematical Sciences,\hfill\break
New York University\\ 251 Mercer St.\\ New York, NY~10012\\~USA.}
\email{yuval.peled@courant.nyu.edu}
\makeatother

\keywords{}

\begin{document}

\maketitle

\begin{abstract}
Let $G=(V,E)$ be a finite graph. For $v\in V$ we denote by $G_v$ the subgraph of $G$ that is induced by $v$'s neighbor set. We say that $G$ is $(a,b)$-regular for $a>b>0$ integers, if $G$ is $a$-regular and $G_v$ is $b$-regular for every $v\in V$. Recent advances in PCP theory call for the construction of infinitely many  $(a,b)$-regular expander graphs $G$ that are expanders also locally. Namely, all the graphs $\{G_v|v\in V\}$ should be expanders as well. While random regular graphs are expanders with high probability, they almost surely fail to expand locally. Here we construct two families of $(a,b)$-regular graphs that expand both locally and globally. We also analyze the possible local and global spectral gaps of $(a,b)$-regular graphs. In addition, we examine our constructions vis-a-vis properties which are considered characteristic of high-dimensional expanders.
\end{abstract}

\section{Introduction}
It is hard to overstate the significance of expander graphs in theoretical computer science and the impact their study has had on a number of mathematical areas. A particularly fascinating example of such an application is Dinur's proof of the PCP Theorem, e.g., \cite{RS07}. However, in recent advances in PCP theory \cite{DinurKaufman} more specialized expander graphs are required. If $v$ is a vertex in a graph $G$ we denote by $G_v$ the subgraph of $G$ that is induced by $v$'s neighbors and call it the \emph{link} of $v$ in $G$. We seek large regular expanders $G$ such that $G_v$ is an expander for every $v\in V(G)$. 

One of the first discoveries in the study of expanders is that for every $d\ge 3$ asymptotically almost every $d$-regular graph is a very good expander. However, it is easy to verify that almost every $d$-regular graph is very far from satisfying the above requirement, as $G_v$ is typically an anticlique. So, here is the central question of the present article: Given positive integers $a>b$ do there exist arbitrarily large $(a,b)$-expanders? Namely, $a$-regular expander graphs $G$ such that every $G_v$ is a $b$-regular expander. If so, how good can the expansion properties (edge expansion, spectral gap) of $G$ and the graphs $G_v$ be?

These investigations are closely related to the recently emerging field of high-dimensional expanders. Vertex-expansion, edge-expansion, spectral gaps and the speed of convergence of the simple random walk on the graph are key ingredients in the theory of expander graphs. While these parameters need not perfectly coincide, they mutually control each other quite tightly. In contrast, the high-dimensional theory suggests a number of inherently different ways to quantify expansion. Namely, the connections between these concepts are nowhere as tight as in the one-dimensional case of expander graphs. It is very suggestive to explore families of $(a,b)$-expanders in light of this array of quantitative measures of high-dimensional expansion. 

\subsection*{Preliminaries, main results and organization}
Let $G$ be a graph and $v\in V(G)$. The link of $v$ denoted $G_v$ is the subgraph of $G$ that is induced by the vertex set $\{u\in V\ |\ uv\in E\}$.

\begin{defin}
Let $a>b\geq 0$ be integers. An {\em $(a,b)$-regular graph} $G$ is an $a$-regular graph, where for every vertex $v\in V(G)$ the link $G_v$ is $b$-regular.
\end{defin}

We recall some basic notions about expander graphs. Let $G$ be a $d$-regular graph with adjacency matrix $A_G$, and let $d=\lambda_1\geq  \lambda_2 \geq ...\geq \lambda_n$ be $A_G$'s eigenvalues. We say that $G$ is an \emph{$\epsilon$-spectral expander} if its \emph{normalized spectral gap} is at least $\epsilon$, i.e., $1-\frac{\lambda_2}{d}\geq\epsilon$. We say that $G$ is a \emph{$\delta$-edge expander} if $|E(U,V\setminus U)|\ge\delta\cdot \min(|U|,|V\setminus U|)$ for every $U\subseteq V(G)$, where $E(A,B)$ is the set of edges with one vertex in $A$ and one in $B$. The largest such $\delta$ is called the edge-expansion (or Cheeger constant) of $G$. 
In our general discussion, we occasionally say that {\em $G$ is an expander} or that {\em it is an $(a,b)$-expander}. These expressions mean respectively that $G$ has some non-trivial but unspecified spectral gap and that $G$ is $(a,b)$-regular and has some non-trivial but unspecified local and global spectral gaps. In the more technical parts of the paper we avoid such loose language and specify the parameters as needed.

\begin{examples} \rm Here are some known families of $(a,b)$-regular graphs:
\begin{enumerate}
    \item An $n$-clique is $(n-1,n-2)$-regular and has good expansion properties. However, a family of $(n-1,n-2)$-regular graphs must consist of disjoint unions of $n$-cliques, and such graphs fail to be global expanders. 
    \item Let $p$ be a prime integer. The $2$-dimensional Ramanujan complexes coming from $PGL_3(\mathbb{Q}_p)$ are $(2p^2+2p+2,p+1)$-regular and have good local and global expansion properties (See \cite{LSV} and \cite{KostaOri}). Note that this is the \textbf{only} family of Ramanujan complexes whose $1$-skeleton is $(a,b)$-regular for some $a$ and $b$. These graphs have many high dimensional expansion properties, see e.g., \cite{DinurKaufman} and \cite{EvraKaufman}.  
      \item An additional group theoretic construction is due to Kaufman and Oppenheim in \cite{Kaufman_Opp1} (The $2$-dimensional case). 
    \item Though $a$ and $b$ are not bounded in this case, Conlon's hypergraph expanders \cite{Conlon} are also $(a,b)$-expanders.
    \item The $1$-skeleton of non-singular, $a$-regular triangulations of surfaces is $(a,2)$-regular. See Section \ref{sec:geom_const} for more on this.
    
\end{enumerate}
\end{examples}

In section \ref{secABTB} we ask how large the spectral gaps can be in an $(a,b)$-regular graph. We first prove an optimal Alon-Boppana type bound which makes no reference to the graph's local expansion:

\begin{thm} \label{ABTB}
The second eigenvalue of an $(a,b)$-regular graph satisfies $$\lambda_2\geq b+2\sqrt{a-b-1}-o_n(1).$$
The bound is tight.
\end{thm}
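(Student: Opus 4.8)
The plan is to adapt the classical Alon–Boppana argument to exploit the local regularity. The key structural observation is that in an $(a,b)$-regular graph $G$, the neighborhood of any vertex $v$ looks, up to the first two spheres, like the neighborhood in the ``tensor-like'' infinite graph that is the right object to compare against: each $v$ has $a$ neighbors, and inside $G_v$ every vertex has degree $b$, so each neighbor $u$ of $v$ sends exactly $b$ edges back into $G_v$ (including possibly the edge to... no — $u$ has $b$ neighbors among $v$'s neighbors), hence exactly $a-1-b$ edges to vertices at distance $2$ from $v$ (other than through $v$ itself). Thus the local branching structure, after the first step, is that of an infinite graph $T_{a,b}$ built from $a$-cliques... more precisely, the universal object is the infinite $(a,b)$-regular graph obtained by a clique-sum / tree-of-cliques type construction whose spectral radius is exactly $b + 2\sqrt{a-b-1}$. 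First I would identify this infinite graph $X_{a,b}$ and compute its spectral radius $\rho = b+2\sqrt{a-b-1}$; the relevant computation is that the non-backtracking/branching walk on $X_{a,b}$ behaves like a walk that accrues a deterministic shift $b$ at each step (from the clique structure one is currently inside) plus a tree-like branching of degree $a-b-1$, giving the shifted Bethe-lattice spectral radius.

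The main steps: (1) Show that for every $r$, if $G$ has girth-like local freeness up to radius $r$ around some vertex (which, by a counting argument, happens at a positive density of vertices once $G$ is large, since $G$ is $(a,b)$-regular with bounded degree and $n\to\infty$), then the ball $B_r(v)$ in $G$ contains a copy of the ball $B_r$ in $X_{a,b}$ as a subgraph — or more robustly, the relevant $2$-sphere counts match, which is all that is needed. (2) Use the standard test-function argument: take an eigenfunction-like test vector supported on $B_r(v)$, built from the radial eigenfunction of $X_{a,b}$ at eigenvalue close to $\rho$, and use Rayleigh quotient estimates to conclude $\lambda_2 \ge \rho - o_n(1)$. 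To get $\lambda_2$ (rather than $\lambda_1$) one uses two far-apart such balls, or subtracts the constant function, as in the usual Alon–Boppana proof. (3) For tightness, exhibit a family of $(a,b)$-regular graphs whose second eigenvalue approaches $b + 2\sqrt{a-b-1}$; the natural candidate is a suitable ``line-graph-like'' or clique-tree construction — e.g. replace each vertex of a high-girth $(a-b-1+1)$-regular Ramanujan graph... actually one wants a graph that is locally $X_{a,b}$ and globally Ramanujan, for instance a balanced blow-up construction, or the graphs constructed later in the paper — which have $\lambda_2 \le b + 2\sqrt{a-b-1} + o_n(1)$.

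The hard part will be Step (1)–(2) made precise: the right notion of ``local model'' $X_{a,b}$ and verifying that a single eigenfunction of $G$ cannot simultaneously be small on a large fraction of these near-extremal balls. The cleanest route is probably to avoid literal subgraph embeddings and instead argue at the level of the Rayleigh quotient: construct $f$ supported on $B_r(v)$ with $f(w)$ depending only on $\mathrm{dist}(v,w)$ via the coefficients $c_k = (a-b-1)^{-k/2}\cdot(\text{polynomial in }k)$ that make the shifted tree operator nearly attain $\rho$, and then bound $\langle A_G f, f\rangle$ from below using \emph{only} the $(a,b)$-regularity to count, for each sphere $S_k(v)$, the number of edges within $S_k$ (at least $b|S_k|/2$, from local $b$-regularity propagating outward) and between $S_k$ and $S_{k+1}$. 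This edge bookkeeping is exactly where the ``$+b$'' and the ``$a-b-1$'' enter, and getting the inequalities to go the right way (we need a lower bound on $\langle A_Gf,f\rangle$, so we need lower bounds on within-sphere edges and control on the boundary) is the delicate point; it should work because local $b$-regularity forces \emph{at least} the clique-like internal connectivity, never less.
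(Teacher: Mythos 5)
Your approach is the Rayleigh-quotient (Nilli-style) route to Alon--Boppana, whereas the paper uses the trace/moment method: it counts closed walks of length $t$ via a stack-based ``Catalan word'' encoding with forward, backward, and sideways letters, where sideways steps (indexed in $b$ ways) encode moves within a link and forward steps (indexed in $a-b-1$ ways) are moves away from the current stack top. The two routes are genuinely different; yours is closer to the standard test-function proof, and if carried out correctly it would yield the same bound.

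However, there is a concrete error in your edge bookkeeping that would derail the argument as written. You claim that $(a,b)$-regularity forces ``at least $b|S_k|/2$ edges within $S_k$, from local $b$-regularity propagating outward.'' This is false. Fix a root $v$ and a vertex $u\in S_k(v)$ with $k\ge 2$. Nothing prevents all $b+1$ of $u$'s neighbors in $S_{k-1}\cup S_k$ from actually lying in $S_{k-1}$ and forming a $K_{b+1}$ there, while the remaining $a-b-1$ neighbors of $u$ lie in $S_{k+1}$ and form a $b$-regular graph among themselves; this is consistent with $G_u$ being $b$-regular whenever $a\ge 2b+2$, and gives $u$ zero neighbors inside $S_k$. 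What $(a,b)$-regularity actually gives you is one-sided: each $u\in S_k$ has \emph{at most} $a-b-1$ neighbors in $S_{k+1}$ (because the $b$ common neighbors of $u$ and some $S_{k-1}$-neighbor of $u$ all lie in $S_{k-1}\cup S_k$, so $u$ has at least $b+1$ non-forward edges). You should then show, for a nonnegative radially decreasing test function with $c_{k-1}\ge c_k\ge c_{k+1}$, that for any nonnegative $(\alpha,\beta,\gamma)$ with $\alpha+\beta+\gamma=a$, $\alpha\ge 1$, $\gamma\le a-b-1$, one has $\alpha c_{k-1}+\beta c_k+\gamma c_{k+1}\ge c_{k-1}+b c_k+(a-b-1)c_{k+1}$; this follows by a small convexity argument, and is what replaces your within-sphere lower bound. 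A related problem is your first version of Step (1): an $(a,b)$-regular graph need not contain any vertex whose $r$-ball matches the clique-tree $X_{a,b}$, so the ``positive density of vertices with girth-like local freeness'' claim should be dropped in favor of the Rayleigh-quotient version. Finally, your tightness discussion is directionally right (the paper's tight family is the distance-two graph on one side of a $(c,d)$-biregular bipartite Ramanujan graph, whose links are disjoint unions of cliques --- precisely your clique-tree picture), but it would need to be made concrete.
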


In the graphs that we construct to prove the tightness of the bound in \Tref{ABTB}, all the links are disconnected. Therefore, it is natural to ask whether the same bound can be attained by graphs whose links are all expanders, or at least connected. The following theorem shows that the answer is negative, by describing some {\em tradeoff} between local and global expansion.

\begin{thm} \label{tradeoff}
Consider an $(a,b)$-regular graph each of whose links has edge expansion at least $\delta>0$. Then {there exists some $\epsilon=\epsilon(a,b,\delta)>0$ such that the} second eigenvalue {of the graph} satisfies:
\[
\lambda_2 \geq \left( b+2\sqrt{a-b-1}\right)(1+\epsilon)-o_n(1).
\]
For fixed $a$ and $b$ with $a\geq b^2+O(b)$, $\epsilon$ strictly increases with $\delta$. For any other {fixed} values of $a$ and $b$, $\epsilon$ increases for small enough $\delta$.
\end{thm}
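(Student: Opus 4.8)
The plan is to sharpen the Alon--Boppana argument behind \Tref{ABTB}. As there, one reduces --- via two vertices at distance $\Omega(R)$ and a radial test function on the radius-$R$ ball around each --- to bounding the top of the \emph{radial spectrum} at a vertex $v$, i.e. the action of $A_G$ on functions constant on each distance sphere $S_t=S_t(v)$. With $n_t=|S_t|$, $e_t=|E(S_t)|$, $Q_t=|E(S_{t-1},S_t)|$, $\kappa_t=Q_t/n_t$ (the average number of ``parents'' of a vertex of $S_t$), $\sigma_t=2e_t/n_t$ and $\gamma_t=n_{t+1}/n_t$, this is the weighted Jacobi operator with diagonal $\sigma_t$ and off-diagonal $\kappa_{t+1}\sqrt{\gamma_t}$, and a localized plane-wave test function gives $\lambda_2(G)\ge \max_t(\sigma_t+2\kappa_{t+1}\sqrt{\gamma_t})-o_n(1)$. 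The degree identity $Q_t+2e_t+Q_{t+1}=an_t$ rewrites the bracket as $a-\kappa_t+\kappa_{t+1}-\kappa_{t+1}(\sqrt{\gamma_t}-1)^2$. I would then add the \emph{free} structural inequality: every $u\in S_t$ has (\#parents of $u$)$+$(\#same-sphere neighbours of $u$)$\ge b+1$ --- because a fixed parent $p$ of $u$ has $b$ common neighbours with $u$ (the edge $up$ lies in $b$ triangles), each necessarily at distance $t-1$ or $t$ from $v$, the former being further parents of $u$ and the latter same-sphere neighbours. Summing, $\kappa_t+\sigma_t\ge b+1$, hence $\kappa_{t+1}\gamma_t\le a-b-1$; since $\gamma_t\ge1$ on layers where the ball keeps growing (and if no long such window exists the graph is of amenable type, where $\lambda_2=a-o_n(1)$ is stronger), $\kappa_{t+1}(\sqrt{\gamma_t}-1)^2\le(\sqrt{a-b-1}-\sqrt{\kappa_{t+1}})^2$, decreasing in $\kappa_{t+1}$. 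Averaging the bracket over a long window (the telescoping $\kappa_{t+1}-\kappa_t$ terms contribute only $o_n(1)$) then yields
\[
\lambda_2(G)\ \ge\ a-\big(\sqrt{a-b-1}-\sqrt{\textstyle\min_t\kappa_{t+1}}\big)^2-o_n(1),
\]
which for $\min_t\kappa_{t+1}=1$ is exactly the bound of \Tref{ABTB}.

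The crux is thus: \textbf{$\delta$-edge-expansion of the links forces the average back-degree $\kappa_{t+1}$ above $1$, by an explicit $\eta=\eta(a,b,\delta)>0$.} For $p\in S_t$, the vertex set $N(p)$ of the link $G_p$ splits by distance from $v$ into $P,S,F$ (distances $t-1,t,t+1$), with \emph{no} $P$--$F$ edge of $G_p$; hence by edge-expansion of $G_p$, $|E_{G_p}(S,F)|=|E_{G_p}(P\cup S,F)|\ge\delta\min(|P|+|S|,|F|)$. Each such edge $\{x,y\}$ ($x\in S$, $y\in F$) certifies that the distance-$(t+1)$ vertex $y$ has $x$ as a parent besides $p$. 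Summing over $p\in S_t$, the left side is $\sum m_{xy}$ over $S_t$--$S_{t+1}$ edges, with $m_{xy}\le b$; regrouping by $y$, each $y$ is certified at most $q_y(q_y-1)\le a(q_y-1)$ times, so $a(Q_{t+1}-n_{t+1})\ge\delta\sum_{p\in S_t}\min(|P|+|S|,|F|)$. A short case analysis --- the regime where atypically many $p$ have large parent- or same-sphere degree only helps, by the mechanism of the previous paragraph --- gives $\sum_p\min(|P|+|S|,|F|)\ge c(a,b)n_t$ with $c(a,b)$ of order $\min(b+1,a-b-1)$, and $n_t\ge n_{t+1}/(a-b-1)$ then yields $\kappa_{t+1}\ge1+\eta$ with $\eta\asymp\delta\,c(a,b)/(a(a-b-1))$. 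Making this combinatorial lemma precise --- in particular obtaining a clean, \emph{monotone} dependence of $\eta$ on $\delta$, and controlling the error and exceptional regimes --- is the step I expect to be hardest.

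Combining, $\lambda_2(G)\ge a-(\sqrt{a-b-1}-\sqrt{1+\eta})^2-o_n(1)=(b+2\sqrt{a-b-1})(1+\epsilon)-o_n(1)$ with $\epsilon=\epsilon(a,b,\eta)>0$ explicit and strictly increasing in $\eta$ on the admissible range $\eta<a-b-2$. The monotonicity statements follow from how the forced $\eta$ depends on $\delta$: increasing $\delta$ shrinks the class of admissible links, so the true forced $\eta$ is weakly increasing in $\delta$, while the bound $\eta\gtrsim\delta\,c(a,b)/(a(a-b-1))$ makes it strictly increasing for small $\delta$; when $a\ge b^2+O(b)$ the constant $c(a,b)$ and the exceptional-case analysis behave uniformly over the whole admissible range of $\delta$, giving strict monotonicity throughout, whereas for smaller $a$ the losses confine the conclusion to small $\delta$. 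Finally, to see that $\epsilon$ is essentially optimal I would pair this with matching constructions of $(a,b)$-regular graphs whose links are $\delta$-edge-expanders and whose $\lambda_2$ approaches the stated value.
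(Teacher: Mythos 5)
Your proposal takes a genuinely different route from the paper. The paper sharpens the moment/trace-method proof of \Tref{ABTB}: it encodes closed walks by Catalan words in $F,B,S$ with a stack of ``local coordinates'', and then uses the edge-expansion of each link $G_x$ to reinterpret certain $FS$ transitions as $FB$ transitions. Concretely, for a least number $R\ge\min(b+1,a-b-1)\delta$ of edges between $\Phi_{x,z}$ and $\Psi_{x,z}$, the count of ways to realize an $FB$ subword grows from $a-b-1$ to $a-b-1+r$ while $FS$ shrinks from $(a-b-1)b$ to $(a-b-1)b-r$, for any $r\le R$; plugging this into the entropy optimization and then choosing $r=\min(R,\lceil b(c^3-c^2)/(b+c)\rceil)$ (with $c=\sqrt{a-b-1}$) gives the correction $\Delta$ and, in particular, the two monotonicity regimes of the theorem and the explicit threshold in the remark. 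You instead run a Nilli-style radial test-function argument: you reduce to a Jacobi operator on sphere statistics $(\sigma_t,\kappa_t,\gamma_t)$, use the degree identity to write $\sigma_t+2\kappa_{t+1}\sqrt{\gamma_t}=a-\kappa_t+\kappa_{t+1}-\kappa_{t+1}(\sqrt{\gamma_t}-1)^2$, feed in the free $(a,b)$-regularity inequality $\kappa_t+\sigma_t\ge b+1$, and identify the single structural quantity $\eta$ with $\kappa_{t+1}\ge 1+\eta$ as the carrier of the expansion hypothesis. This is a perfectly legitimate alternative; it is arguably more transparent about \emph{what} is being exploited (extra back-edges forced by link expansion), whereas the paper's walk-reinterpretation is more combinatorial and yields a sharper numerical $\epsilon$ and a cleaner picture of the threshold $\delta^*$.

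Two remarks on the details. First, the step you flag as hardest --- proving $\kappa_{t+1}\ge 1+\eta(a,b,\delta)$ --- is in fact not hard given what you already have. From $|P_p|+|S_p|\ge b+1$ you also get $|F_p|\le a-b-1$; since $\min(b+1,a-b-1)/(a-b-1)\le 1$, in both cases ($|P_p|+|S_p|\gtrless|F_p|$) one has $\min(|P_p|+|S_p|,|F_p|)\ge\frac{\min(b+1,\,a-b-1)}{a-b-1}\,|F_p|$, and summing over $p$ gives $\sum_p\min(\cdot)\ge\frac{\min(b+1,a-b-1)}{a-b-1}Q_{t+1}\ge\frac{\min(b+1,a-b-1)}{a-b-1}n_{t+1}$, whence $\kappa_{t+1}-1\ge\delta\min(b+1,a-b-1)/(a(a-b-1))$ directly; no exceptional-case analysis is needed. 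Second, the places that do need more than a sentence are (i) the ``amenable'' escape route when spheres fail to grow over a long window (you need a quantitative Cheeger-type step to convert non-growth into $\lambda_2 = a - o_n(1)$), (ii) controlling terms with $\gamma_t<1$ and/or $\kappa_{t+1}>a-b-1$ (your clean bound $\kappa_{t+1}(\sqrt{\gamma_t}-1)^2\le(\sqrt{a-b-1}-\sqrt{\kappa_{t+1}})^2$ assumes $\gamma_t\ge1$, which does not hold layer-by-layer), and (iii) the monotonicity discussion: your explicit $\eta$ is linear in $\delta$, so your bound is strictly increasing in $\delta$ for all $a,b$, which does not by itself reproduce the paper's dichotomy at $a\gtrsim b^2$. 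In the paper that dichotomy comes from the explicit optimization over $r\le R$, where the optimizer saturates at $r^*=\lceil b(c^3-c^2)/(b+c)\rceil$ once $\delta$ passes a threshold; nothing in your scheme is explicitly optimizing over such a parameter, so if you wish to match the statement as written you should either observe that strict monotonicity for all $\delta$ is in fact a stronger form of the claim for \emph{your} $\epsilon$, or rephrase accordingly.

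Finally, your last sentence about ``matching constructions'' goes beyond what the theorem asserts (it is a lower bound, not a characterization); the paper itself leaves the tightness of this tradeoff as an open problem.
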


In Section \ref{sec:polyg} we introduce our \emph{Polygraph} constructions, which can be viewed as a family of new graph products. Concretely, these constructions transform a high-girth regular expander into an $(a,b)$-expander.
To illustrate this idea, let $q> p \ge 0$ be integers, let $G$ be a graph {with distance function $\rho$} and girth larger then $3p+3q$. The vertex set of the {\em polygraph} $G_S$ is $V(G)^3$ and $(x_1, x_2, x_3)$ is a neighbor of $(y_1, y_2, y_3)$ iff the multiset of three distances $[\rho(x_i,y_i)|i=1,2,3]$ coincides with the multiset $[p, q, p+q]$.

For illustration, here is a way of viewing the Polygraph corresponding to $p=0$ and $q=1$. Take three copies of a $d$-regular graph $G$ of girth bigger than $3$ and have a token move on each of them. At every step two of the tokens move to a neighboring vertex and the third token stays put. Any configuration of tokens is a vertex of the graph and the above process defines its adjacency relation.

\begin{thm} \label{Poly_are_exp}
Let $q> p\ge 0$ be even integers. 
If $G$ is connected, non-bipartite and its girth is bigger than $3p+3q$, then $G_S$ is an $(a,b)$-regular local $\epsilon$-spectral expander and global $\epsilon'$-spectral expander. Here $a,b$ and $\epsilon$ depend only on $p$ and $q$, while $\epsilon'$ depends also on the spectral gap of $G$.
\end{thm}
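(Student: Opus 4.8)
The plan is to establish the three claims---$(a,b)$-regularity, local spectral expansion, and global spectral expansion---more or less separately, since they require different ingredients. First I would pin down the combinatorics of the polygraph $G_S$. Fix the degree $d$ of $G$ and count, for a vertex $x=(x_1,x_2,x_3)$, the neighbors $y$ with $[\rho(x_i,y_i)]=[p,q,p+q]$: because $p,q$ are even and the girth exceeds $3p+3q$, the ball of radius $p+q$ around each $x_i$ is a tree, so the number of vertices at distance exactly $r$ from $x_i$ is a fixed function $N(r)=N(r;d)$ (namely $d(d-1)^{r-1}$ for $r\geq 1$, and $1$ for $r=0$). Summing over the three ways to assign the distances $p,q,p+q$ to the three coordinates gives the degree $a=a(p,q,d)$ as an explicit polynomial in $d$; the high-girth hypothesis guarantees there is no overcounting or coincidence among the balls. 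Actually, since the statement asserts $a$ depends only on $p,q$, I would work with a fixed $d=d(p,q)$ absorbed into the constants, or else note that $d$ is itself determined once we demand the final object be genuinely $(a,b)$-regular---either way this is a bookkeeping step.

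Next, the link structure. Given $x$, the link $G_{S,x}$ is the graph on the neighbor set of $x$; two neighbors $y,z$ of $x$ are adjacent in $G_S$ iff $[\rho(y_i,z_i)]=[p,q,p+q]$ as well. The key point is that, within the high-girth (tree) regime, the ``local picture'' around $x$ is completely determined by $p,q,d$ and carries no global information, so $G_{S,x}$ is \emph{the same graph} (up to isomorphism) for every $x\in V(G_S)$, and likewise it is $b$-regular for an explicit $b=b(p,q,d)$. Its spectral gap $\epsilon=\epsilon(p,q)$ is then simply the spectral gap of this one fixed finite graph, which is positive provided that graph is connected---so I would verify connectivity of the link directly from the adjacency rule (this uses that $q>p$ and the parity, which make the ``move two tokens'' steps rich enough to connect all configurations; the $p=0,q=1$ token picture is the intuition, and the general even case should reduce to it by a similar argument or by explicitly exhibiting paths). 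For the link to be an expander we just need this combinatorial graph to be connected and non-trivial, after which $\epsilon$ exists by finiteness.

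For the global spectral gap I would use a spectral/random-walk decomposition. The adjacency operator $A_{G_S}$ acts on $\ell^2(V(G)^3)$, and I want to express it---or rather bound its restriction to the space orthogonal to constants---in terms of operators built from $A_G$. On a high-girth graph, ``move a token to distance exactly $r$'' is a polynomial $f_r(A_G)$ in the adjacency operator (a Chebyshev-like polynomial, since balls are trees), so $A_{G_S}$ is a sum, over the three coordinate-assignments of $\{p,q,p+q\}$, of tensor products $f_{r_1}(A_G)\otimes f_{r_2}(A_G)\otimes f_{r_3}(A_G)$. Diagonalizing $A_G$ with eigenvalues $d=\mu_1\geq\mu_2\geq\cdots$, the eigenvalues of $A_{G_S}$ are exactly $\sum_{\text{assignments}} f_{r_1}(\mu_{j_1})f_{r_2}(\mu_{j_2})f_{r_3}(\mu_{j_3})$. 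The top eigenvalue $a$ comes from $\mu_{j_1}=\mu_{j_2}=\mu_{j_3}=d$; I must show every other eigenvalue is at most $a(1-\epsilon')$. When all three $\mu_{j_i}=d$ except we are off the constant vector---impossible since $\mu_1=d$ is simple as $G$ is connected---so at least one coordinate uses $\mu_j$ with $|\mu_j|\le\mu_2(G)<d$ (non-bipartiteness rules out $\mu_n=-d$, which matters because $f_r$ at $-d$ could otherwise be large in absolute value). A continuity/compactness argument on the polynomials $f_r$ then yields a uniform gap $\epsilon'=\epsilon'(p,q,\lambda_2(G))>0$: on the compact set where at least one argument lies in $[-d+\eta, d-\eta]$ and the others in $[-d,d]$, the function $\sum f_{r_1}f_{r_2}f_{r_3}$ is strictly below its value $a$ at $(d,d,d)$.

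\textbf{Main obstacle.} The routine parts are the degree/link counts and the tensor-product diagonalization. The genuinely delicate step is controlling the \emph{signs} in the global eigenvalue estimate: the polynomials $f_r(A_G)$ evaluated at negative eigenvalues of $G$ can be large in absolute value (comparable to $f_r(d)$ when $\mu\approx -d$), and in a product of three such factors the signs might conspire to make $\sum_{\text{assignments}} f_{r_1}(\mu_{j_1})f_{r_2}(\mu_{j_2})f_{r_3}(\mu_{j_3})$ close to, or even exceeding, $a$ in absolute value. This is precisely why the hypotheses ``non-bipartite'' (so $\mu_n>-d$) and ``even $p,q$'' are needed, and making the resulting bound uniform---so that $\epsilon'$ depends only on $p,q$ and the spectral gap of $G$, not on $G$ itself---requires the compactness argument above, carried out carefully so that the symmetrization over the three assignments of $\{p,q,p+q\}$ is exploited rather than fought against.
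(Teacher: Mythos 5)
Your global-expansion argument is essentially the paper's: diagonalize $A_{G_S}$ as a symmetrized tensor of polynomials of $A_G$ (the $f_r$ are the Geronimus polynomials of the paper, Lemma~\ref{Geroni}), and bound all non-principal eigenvalues away from $a_S$. The paper makes this explicit in Proposition~\ref{lambdaGS} rather than resorting to a compactness argument, but the structure and the role of non-bipartiteness and even parities are identical. So that half is fine, just less quantitative than what the paper actually does.

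The local-expansion argument is where there is a genuine gap. You fix $d$ (``absorbed into the constants'') and conclude that the link is a single fixed finite connected graph, hence has some positive spectral gap. Two problems. First, you defer the connectivity of the link to ``a similar argument or explicitly exhibiting paths,'' but this is exactly the content of Claim~\ref{G_S_v-connected}, which is nontrivial (the conditions there are delicate and case-dependent); without it the ``fixed finite graph'' could be disconnected and your $\epsilon$ could be $0$. Second, and more importantly, the paper's proof of Theorem~\ref{Poly_are_exp} is specifically designed to give a bound on $\epsilon$ that is \emph{uniform in $d$}: since $b_S$ grows with $d$, the naive ``the link is a fixed graph for fixed $d$'' observation produces a spectral gap that could in principle decay to $0$ as $d\to\infty$. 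The paper rules this out by deriving two lower bounds $\beta_1(d)$ (via Cheeger's inequality and the Babai/Aldous fact that a vertex-transitive graph of diameter $\Delta$ has vertex expansion $\ge \frac{1}{2\Delta}$, combined with the diameter bound $\le 10$ from the pivot-vertex path construction) and $\beta_2(d)$ (via a count of length-$10$ paths between any two link vertices, yielding $\lambda(L)^{10}\le \deg(L)^{10}-m\,a_S$ for an explicit $m$), one decreasing and one increasing in $d$, and taking their maximum. This is the bulk of the work on the local side and your proposal doesn't engage with it; if you want to claim $\epsilon$ depends only on $p,q$ (not on $d$), something like this two-bound argument is required, not merely finiteness of the link.
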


In Section \ref{sec:concrete} we investigate in detail the regularity and local spectral gaps of two specific Polygraph constructions.
In Section \ref{sec:hdx} we examine Polygraphs from the perspective of high-dimensional expanders. We discuss properties such as geometric overlap, discrepancy, coboundary expansion and mixing of the edge-triangle-edge random walks.
In Section \ref{sec:geom_const} we provide some additional constructions of $(a,b)$-regular graphs, based on regular triangulations of surfaces and tensor products of graphs.
We conclude the paper with some open questions related to this study.

\section{The second eigenvalue of $(a,b)$-regular graphs - Lower bounds} \label{secABTB}

\begin{proof}[Proof of \Tref{ABTB}]
This theorem is reminiscent of the Alon-Boppana Theorem. We are inspired by the proof of that theorem via the moment method (e.g., \cite{HLV} Section 5.2). Let $G$ be a $d$-regular graph with adjacency matrix $A_G=A$ and eigenvalues $d=\lambda_1 \ge \lambda_2 \ge \ldots \ge \lambda_n$. For $t$ a positive integer we note that
\begin{equation}\label{moment_method}
\textrm{trace}(A^{t})=\sum \lambda_i^t \le d^t+(n-1)\cdot \Lambda^t,
\end{equation}
where $\Lambda=\lambda(G):=\max\{\lambda_2, -\lambda_n\}$. On the other hand, $\textrm{trace}(A^{t})$ is the number of closed walks of length $t$ in $G$. This number can be bounded from below by counting length-$t$ walks that start and end at some given {\em origin} vertex in $G$'s universal cover $\mathbb{T}_d$, the (infinite) $d$-regular tree. Associated with such a walk is a word in $\{F, B\}^t$, where $F$ (resp.\ $B$) stands for a {\em forward} step away from the origin ({\em backward} step toward it). This word satisfies the {\em Catalan condition}, i.e., it has an equal number of $B$'s and $F$'s, and every initial segment has at least as many $F$'s as $B$'s. Also, $B$-steps are uniquely defined whereas every $F$-step can be realized in $d-1$ ways. By working out the number of such words, the Alon-Boppana bound $\Lambda\ge 2\sqrt{d-1}-o_n(1)$ is obtained.

En route to a proof there are two obvious obstacles:
\begin{equation}\label{origin}
\text{When the walk resides at the origin, there are $d$ possible $F$ steps.}
\end{equation}
We soon address this point.
\begin{equation}\label{parity}
\noindent\text{A closed walk in $\mathbb{T}_d$ has even length, but in our proof $t$ is odd.}
\end{equation}
The advantage of $t$ being  odd is that the term $\Lambda^t$ in (\ref{moment_method}) can be replaced by the possibly smaller $\lambda_2^t$.

Our proof needs a modified notion of forward and backward steps and also allow for {\em sideways} step. We consider length-$t$ {\em Catalan words} in the alphabet
$\Sigma=\{F_j|j=1,\ldots,a-b-1\}\cup\{B\}\cup\{S_i|i=1,\ldots,b\}.$
Namely, a word with an equal number of $F$'s and $B$'s where $\#F\ge\#B$ in each initial segment. We wish to injectively associate to each such word a closed walk in our graph. Roughly speaking, when the next letter in the word is $F_j$ we should move to the $j$-th {\em forward neighbor} of our current position, likewise move to the $i$-th {\em sideways neighbor} upon reading $S_i$, and finally moving one step backward on a $B$.

In $\mathbb{T}_d$ it is perfectly clear what forward and backward mean and sideways does not exist. As we explain next, we navigate a general graph using a {\em local system of coordinates}. To this end we use a stack $X$ in which we store vertices, where every two consecutive entries in $X$ are two adjacent vertices in $G$. An invariant that we maintain is that $x$, the vertex at which the walk currently resides is always a neighbor of {\bf top}, the top entry of $X$. Suppose that we move next from $x$ to a neighbor $y$. 
\begin{itemize}
\item
If $y$ is not a neighbor of {\bf top}, this is a {\em forward} step, and we {\em push} $x$.
\item
If $y=${\bf ~top}, this is a {\em backward} step and we {\em pop}.
\item
If $y$ is a neighbor of {\bf top}, this is a {\em sideways} step, and the stack stays unchanged.
\end{itemize}

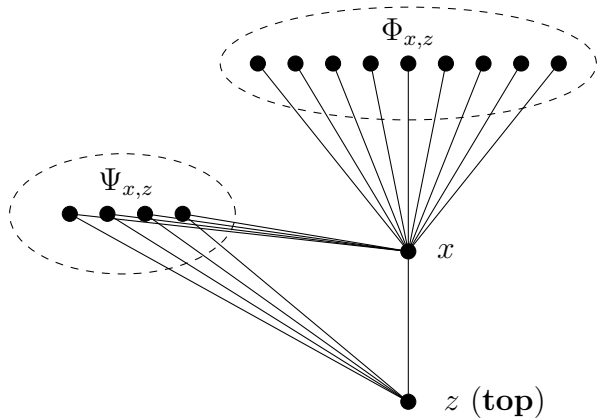
\begin{figure}
\begin{center}
\begin{tikzpicture}
\draw[dashed] (4,2.5) ellipse (2.5cm and .75cm);
\draw[dashed] (0.2,.5) ellipse (1.5cm and 0.8cm);
\draw[fill=black] (4,0) circle (0.1cm);
\draw[fill=black] (4,-2) circle (0.1cm);
\draw[fill=black] (1,0.5) circle (0.1cm);
\draw[fill=black] (0.5,0.5) circle (0.1cm);
\draw[fill=black] (0,0.5) circle (0.1cm);
\draw[fill=black] (-0.5,0.5) circle (0.1cm);

\draw[fill=black] (4,2.5) circle (0.1cm);
\draw[fill=black] (4.5,2.5) circle (0.1cm);
\draw[fill=black] (3.5,2.5) circle (0.1cm);
\draw[fill=black] (3,2.5) circle (0.1cm);
\draw[fill=black] (5.5,2.5) circle (0.1cm);
\draw[fill=black] (5,2.5) circle (0.1cm);
\draw[fill=black] (2.5,2.5) circle (0.1cm);
\draw[fill=black] (6,2.5) circle (0.1cm);
\draw[fill=black] (2,2.5) circle (0.1cm);

\draw (4,0)--(4,2.5);
\draw (4,0)--(4.5,2.5);
\draw (4,0)--(3.5,2.5);
\draw (4,0)--(3,2.5);
\draw (4,0)--(2.5,2.5);
\draw (4,0)--(2,2.5);
\draw (4,0)--(5,2.5);
\draw (4,0)--(5.5,2.5);
\draw (4,0)--(6,2.5);

\draw (4,-2)--(4,0);
\draw (4,-2)--(1,0.5)--(4,0);
\draw (4,-2)--(0.5,0.5)--(4,0);
\draw (4,-2)--(0,0.5)--(4,0);
\draw (4,-2)--(-0.5,0.5)--(4,0);

\node at (4.5,0) {\large $x$};
\node at (5.15,-2) {\large $z~$({\bf top})};

\node at (4,2.9) {\large $\Phi_{x,z}$};
\node at (0.25,0.9) {\large $\Psi_{x,z}$};

\end{tikzpicture}
\end{center}
\caption{
The local system of coordinates of $x$, at which the walk currently resides, with respect to its neighbor $z$, that is currently at the top of the stack.}
\label{fig:local}
\end{figure}

It remains to define which is $x$'s $j$-th forward (resp.\ sideways) neighbor. This choice is not absolute, but rather depends on the current {\bf top}: Consider two neighbors $x$ and $z$ in $G$, where we think of $z$ as the current {\bf top}, and $x$ as our current position. We fix some ordering on the set $\Phi_{x,z}$ of the $a-b-1$ neighbors of $x$ that are not neighbors of $z$ and an ordering on the set $\Psi_{x,z}$ of the $b$ joint neighbors of $x$ and $z$. Thus, if we are currently at $x$, and $z$ is at the top of the stack, we interpret the symbol $F_j$ as ``move to the $j$-th vertex in $\Phi_{x,z}$". Likewise $S_i$ means ``move from $x$ to the $i$-th vertex in $\Psi_{x,z}$" and $B$ means ``step from $x$ to $z$". In other words, $\Psi_{x,z}= G_x\cap G_z$, whereas $\Phi_{x,z}= G_x\setminus (G_z\cup\{z\})$. (See Figure \ref{fig:local}).

Given a starting vertex $v$ and a length-$t$ Catalan-word $\omega$ over the alphabet $\Sigma$, we will specify a closed walk of length $t+2$ that starts and ends at $v$. Before we do that we need to deal with the issue raised in (\ref{origin}) above.
We associate with every vertex $v$ one of its neighbors $\varphi_v$. We start our walk at $v$, then move to $\varphi_v$ and push $v$ on the stack. Henceforth we follow the transitions that are dictated by $\omega$ and the push/pop rules described above. Since $\#F_{\omega}=\#B_{\omega}$, when we are done reading $\omega$ the stack contains only the symbol $v$, and we therefore reside at a neighbor of $v$. We now empty the stack and move to $v$. This clearly associates injectively a closed path as described with every pair $(v,\omega)$ for $v$ a vertex and $\omega$ a Catalan word.

We have thus shown that the number of closed walks of length $t+2$ in $G$ is at least
\begin{equation}
n\sum_{0\leq k< \frac{t}{2}}\binom{t}{k, k, t-2k} \frac{1}{k+1} b^{t-2k}(a-b-1)^k.
\end{equation}

Here $n$ counts the choices of the starting vertex $v$. The trinomial coefficient counts words $\omega$ with $\#F_{\omega}=\#B_{\omega}=k$ and $\#S_{\omega}=t-2k$. The term $\frac{1}{k+1}$ accounts for the probability that the Catalan Condition holds. Finally every $F$-step can be indexed in $a-b-1$ ways and every $S$-step in $b$ ways.

There are only $O(t)$ terms in this sum whereas the largest term is exponential in $t$, so it suffices to determine the largest term in the sum. To this end we express $k=\alpha t$, and then we need to find the $\alpha$ that maximizes the expression\footnote{Logarithms here are to base $2$, unless otherwise stated.}
\begin{equation}\label{disco}
\mathcal{H}(\alpha,\alpha,1-2\alpha)+\alpha \log(a-b-1) + (1-2\alpha)\log b,
\end{equation}
where $\mathcal{H}$ is the binary entropy function.
Straightforward calculation yields that the maximum is $\log(b+2\sqrt{a-b-1})$ which is attained for $\alpha=\frac{\sqrt{a-b-1}}{b+2\sqrt{a-b-1}}$.
When we return to (\ref{moment_method}), the best lower bound on $\lambda_2$ is attained for $t\approx \frac{\log n}{\log d - \log(\lambda_2)}$ and yields
\[
\lambda_2\ge b+2\sqrt{a-b-1}-o_t(1) 
\]
as claimed.

We now prove that this bound is tight. Let $H=(L,R,F)$ be a connected, bipartite, left $c$-regular and right $d$-regular graph with $\textrm{girth}(H)\ge 8$. Associated with $H$ is the $c(d-1)$-regular graph $G=(L,{E})$, where $xy\in E$ if and only if there is a vertex $z\in R$ such that $xz, yz\in F$. Note that every link in $G$ is a $(d-2)$-regular graph which is the disjoint union of $c$ graphs each of which a $(d-1)$-clique. All told, this is a construction of $(c(d-1),d-2)$-regular graphs. For example, here is a concrete family of bipartite graphs $H$ as above with $c=2$. Let $\Gamma$ be a $d$-regular triangle-free graph, and let $L=E(\Gamma), R= V(\Gamma)$ and $F$ the vertex-edge incidence relation of $\Gamma$.

Of course, the links in this graph are not expanders - they are not even connected. It is easy to see that the adjacency matrix of $G$ is a block in $A_H^2-cI$. If $H$ is a $(c,d)$-biregular bipartite Ramanujan graph  (see Section $2.3$ in \cite{MarSpiSri}), then $\lambda(G)\leq \sqrt{c-1}+\sqrt{d-1}$. Thus $$\lambda(G)\le (\sqrt{c-1}+\sqrt{d-1})^2 -c=d-2+2\sqrt{c(d-1)-(d-2)-1}$$ showing that the bound is tight.
\end{proof}

\begin{proof}[Proof of \Tref{tradeoff}]

To start, we improve the lower bound on $\lambda_2$ in Theorem \ref{ABTB} when each $G_v$ is {\em connected}. In those cases where Theorem \ref{ABTB} gives a tight bound, our census of closed walks is complete. However, as we soon observe, when all the links are connected, many additional closed walks emerge. To maintain the overall structure of the proof $F, B$ and $S$ steps still go with push, pop and no change to the stack, but they need no longer reflect the distance from the origin.
 
Given an initial vertex $v$ and a word in $B, F, S$ (with appropriate indices) again we associate to these data a walk in $G$ that starts and ends in $v$. However, the correspondence is now somewhat different. Suppose that the walk currently resides at the vertex $x$, its neighbor $z$ is at the top of the stack, and the coming two letters are $FB$ in this order. Because the link of $x$ is connected, there must be an edge between some vertex $u\in\Phi_{x,z}$ and some vertex $y\in \Psi_{x,z}$. Say that we realize the $F$-step by moving from $x$ to $u$. After this move {\bf top~}$=x$ and the penultimate entry in the stack is $z$. In the proof of Theorem \ref{ABTB} the coming $B$-step is realized now by moving back to $x$, and popping $x$, making {\bf top~}$=z$. But because $u$ has a neighbor $y\in \Psi_{x,z}$, we can also move from $u$ to $y$ and pop $x$ while respecting the structure of the proof. In other words, now we can and will consider the transitions $x\to u\to y$ as realizing the subword $FB$ rather than $FS$ (see Figure \ref{fig:local_exp}).

\begin{figure}
\begin{center}
\begin{tikzpicture}
\draw[dashed] (4,2.5) ellipse (2.5cm and .85cm);
\draw[dashed] (0.2,.5) ellipse (1.5cm and 0.8cm);
\draw[fill=black] (4,0) circle (0.1cm);
\draw[fill=black] (4,-2) circle (0.1cm);
\draw[fill=black] (1,0.5) circle (0.1cm);
\draw[fill=black] (0.5,0.5) circle (0.1cm);
\draw[fill=black] (0,0.5) circle (0.1cm);
\draw[fill=black] (-0.5,0.5) circle (0.1cm);

\draw[fill=black] (4,2.5) circle (0.1cm);
\draw[fill=black] (4.5,2.5) circle (0.1cm);
\draw[fill=black] (3.5,2.5) circle (0.1cm);
\draw[fill=black] (3,2.5) circle (0.1cm);
\draw[fill=black] (5.5,2.5) circle (0.1cm);
\draw[fill=black] (5,2.5) circle (0.1cm);
\draw[fill=black] (2.5,2.5) circle (0.1cm);
\draw[fill=black] (6,2.5) circle (0.1cm);
\draw[fill=black] (2,2.5) circle (0.1cm);

\draw (4,0)--(4,2.5);
\draw (4,0)--(4.5,2.5);
\draw (4,0)--(3.5,2.5);
\draw (4,0)--(3,2.5);
\draw (4,0)--(2.5,2.5);
\draw (4,0)--(2,2.5);
\draw (4,0)--(5,2.5);
\draw (4,0)--(5.5,2.5);
\draw (4,0)--(6,2.5);

\draw (4,-2)--(4,0);
\draw (4,-2)--(1,0.5)--(4,0);
\draw (4,-2)--(0.5,0.5)--(4,0);
\draw (4,-2)--(0,0.5)--(4,0);
\draw (4,-2)--(-0.5,0.5)--(4,0);

\node at (4.5,0) {\large $x$};
\node at (5.15,-2) {\large $z~$({\bf top})};
\node at (2,2.8) {\large $u$};
\node at (-.5,0.8) {\large $y$};

\draw[red,ultra thick,dashed] (-0.5,.5)--(2,2.5);
\node at (4,2.9) {\large $\Phi_{x,z}$};
\node at (0.75,0.9) {\large $\Psi_{x,z}$};

\end{tikzpicture}
\end{center}
\caption{
The assumption that $G_x$ is connected guarantees that there is an edge $uy$ between $\Phi_{x,z}$ and $\Psi_{x,z}$. The walk $x\to u \to y$ is now considered an $FB$ move rather than an $FS$ move as in \Tref{ABTB}. As a result, the vertex $z$, and not $x$, is on the top of the stack when the walk reaches $y$. The better an expander graph $G_x$ is, the more edges there are between $\Phi_{x,z}$ and $\Psi_{x,z}$. }
\label{fig:local_exp}
\end{figure}
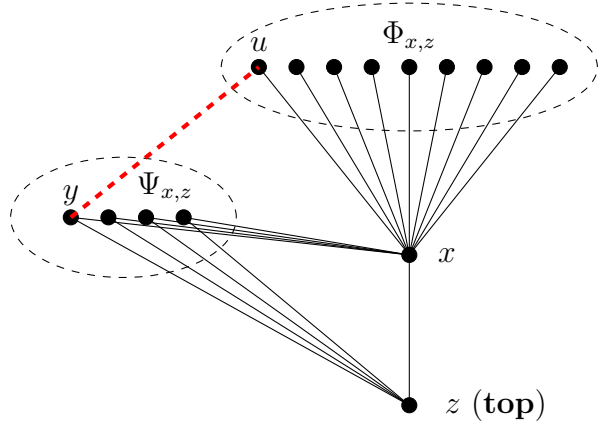

To complete the details, we place $u$ first in the ordering of $\Phi_{x,z}$, and $y$ first in $\Psi_{u,x}$. We interpret each subword $F_iB$ for $i=1,\ldots,a-b-1$ (including $F_1B$) as before. However, we allow as well the subword $F_{1^{\ast}} B$ to which we associate the transitions $x\to u\to y$. The same applies to subwords $F_iS_j$ which we interpret as usual. However, we forbid the subword $F_1S_1$ to avoid overcounting the walk $x\to u \to y$. 

This change affects the census in Theorem \ref{ABTB}. A subword $FB$ has now $a-b$ rather $a-b-1$ realizations, whereas for $FS$ the count goes down from $(a-b-1)b$ to $(a-b-1)b-1$. As the next calculation shows, the gain outweighs the loss, yielding a better lower bound on $\lambda_2$.

Clearly there are $\left( \beta ^{2\beta}(1-2\beta)^{1-2\beta} \right)^{-t(1+o_t(1))}$ length $t$ words in the alphabet $\{F,B,S\}$ with $\beta t$ letters $F$ and $B$ and $(1-2\beta)t$ letters $S$. Standard concentration-of-measure inequalities show that with a proper choice of the $o_t(1)$ terms, the same asymptotic counts remains even if we insist that:
\begin{itemize}
\item 
The Catalan condition for $F$ and $B$ holds
\item
Every pair of consecutive letters appears the "right" number of times. E.g., the number of $FB, FS, SS$ subwords is $(1+o_t(1))\beta^2t$, resp.\ $(1+o_t(1))\beta(1-2\beta)t$, and $(1+o_t(1))(1-2\beta)^2t$, etc.
\end{itemize}
For every such word, we compute the number of permissible ways to index the $F$-steps and the $S$-steps as following:  

\begin{itemize}
\item 
A letter $F$ that is followed by an $F$ can be indexed in $a-b-1$ ways.
\item A letter $S$ that is not preceded by an $F$ can be indexed in $b$ ways.
\item A pair of consecutive letters $FB$ can be indexed in $a-b$ ways.
\item A pair of consecutive letters $FS$ can be indexed in $(a-b-1)b-1$ ways.
\end{itemize} 

In summary, we seek to maximize
\[
\begin{split}
\mathcal{H}(\beta,\beta,1-2\beta)&+\beta^2 \log(a-b-1) + (1-2\beta)(1-\beta)\log b\\
&+\beta^2\log(a-b)+(\beta-2\beta^2)\log(b(a-b-1)-1).
\end{split}
\]
Write $\log(b(a-b-1)-1)=\log b+\log(a-b-1)+\log(1-\frac{1}{b(a-b-1)})$ and $\log(a-b)=\log(a-b-1)+\log(1+\frac{1}{a-b-1})$ to conclude that instead of the analysis of Equation (\ref{disco}) we now seek $\beta$ that maximizes
\begin{equation}\label{delta}
S(\beta,a,b)=\mathcal{H}(\beta,\beta,1-2\beta)+\beta \log(a-b-1) + (1-2\beta)\log b + \Delta,
\end{equation}
where
\[
\Delta=\beta^2\log\left (1+\frac{1}{a-b-1}\right )+\beta(1-2\beta)\log\left (1-\frac{1}{b(a-b-1)}\right).
\]

We now prove that $\max_{\beta}S(\beta,a,b) > \log(b+2\sqrt{a-b-1})$ whenever $a-b\ge 3$ and $b\ge 2$. The proof for complementary parameter range follows from by observing that $G$ is necessarily comprised of disjoint copies of the same graph $H$ and is therefore not even connected. If $b=0,1$ and $\delta>0$, then $H$ is a triangle. When $a-b=1,2$ the same holds with $H=K_{a+1}$ and $H=(K_{a+2}$ minus a perfect matching) respectively.

We denote $c=\sqrt{a-b-1}$. The values of $$\frac{(b+2c)^2}{\log(e)}\left(\max_{\beta}S(\beta,a,b)-\log(b+2c)\right)$$ for small $a$ and $b$'s are shown in the following table:
\vspace{0.2in}
\begin{center}
\begin{tabular}{ |c||c|c|c|c|c|c|c|}
 \hline
 \multicolumn{8}{|c|}{$\frac{(b+2c)^2}{\log(e)}\cdot\max(S(\beta,a,b)-\log(b+2c))$} \\
 \hline
 $a-b-1$ and $b$ & $2$ & $3$ & $4$ & $5$ & $6$ & $7$ & $8$ \\
 \hline
 $2$ & $0.062$ & $0.08$  & $0.088$ & $0.092$ & $0.094$ & $0.096$ & $0.097$\\
 $3$ & $0.281$ & $0.287$ & $0.29$  & $0.29$  & $0.291$ & $0.291$ & $0.291$\\
 $4$ & $0.397$ & $0.401$ & $0.402$ & $0.402$ & $0.402$ & $0.401$ & $0.401$\\
 $5$ & $0.472$ & $0.474$ & $0.475$ & $0.475$ & $0.475$ & $0.474$ & $0.474$\\
 $6$ & $0.525$ & $0.527$ & $0.527$ & $0.527$ & $0.527$ & $0.527$ & $0.526$\\
 $7$ & $0.565$ & $0.567$ & $0.567$ & $0.567$ & $0.567$ & $0.566$ & $0.566$\\
 $8$ & $0.597$ & $0.598$ & $0.599$ & $0.599$ & $0.598$ & $0.598$ & $0.598$\\
 \hline
\end{tabular}    
\end{center}
\vspace{0.2in}
\noindent Consequently, in proving our statement we can ignore the case where both $b$ and $c$ are small. We do not find a closed form expression for $\beta$ that maximizes (\ref{delta}).
Instead we let $\beta:=\frac{c}{b+2c}$, and show that $S(\frac{c}{b+2c},a,b)>\log(b+2c)$.
With this choice of $\beta$ there holds $\frac{1-2\beta}{\beta}= \frac{b}{c}$, so that

\[
\Delta=\frac{\beta^2}{c}\left(c\cdot\log(1+\frac{1}{c^2})+b\cdot\log(1-\frac{1}{b\cdot c^2}) \right).
\]
It is easily seen that for $c>0$ fixed, this expression is an increasing function of $b$, whence it suffices to verify that $\Delta > 0$ when $b=2$. Using Taylor expansion it is easily verified that this inequality holds already for $c>1.5$. The same analysis yields that for every $b\ge 2$ and large $c$ there holds $\Delta = \frac{\log(e)-o_c(1)}{(b+2c)^2}$. It follows that if all $G_v$ are connected then
\[
\lambda_2(G) \geq \left( b+2\sqrt{a-b-1}\right)\left(1+\Omega\left(\frac{\log e}{(b+2c)^2}\right)\right)-o_n(1) 
\]
as claimed.

We turn to consider what happens when the graphs $G_v$ expand. In this case, for every two adjacent vertices $x, z$ there are some edges between the sets $\Psi_{x,z}$ and $\Phi_{x,z}$, where, as above, $\Psi_{x,z}:= G_x\cap G_z$, and $\Phi_{x,z}:= G_x\setminus (G_z\cup\{z\})$. 
Let $R$ be the least number of such edges over all $xz\in E(G)$. Hence, by the definition of edge expansion, $R\geq\min(b+1,a-b-1)\cdot \delta$.
Under the assumption that all $G_v$ are connected we pick one edge $uy$ with $u\in \Phi_{x,z}$ and $y\in\Psi_{x,z}$ and create a special forward step denoted by $F_{1^{\ast}}$. We interpret the subword $F_{1^{\ast}}B$ as an instruction to move $x\to u\to y$ and maintaining $z$ on the top of the stack. In addition, we forbid the subword $F_iS_j$ where $u$ is the $i$-th vertex in $\Phi_{x,z}$ and $y$ is the $j$-th vertex in $\Psi_{u,x}$. In the present context we can likewise consider some $r\le R$ edges $u_k y_k$, $k=1,...,r$, with $u_k\in \Phi_{x,z}, y_k\in\Psi_{x,z}$. Associated with them we create $r$ types of forward steps called $F_{1^{\ast}}, F_{2^{\ast}},\ldots,F_{r^{\ast}}$ and associate with the subword $F_{k^{\ast}}B$ the move $x\to u_k\to y_k$, while $z$ stays on the top of the stack. In addition, we forbid subwords of the form $F_{i_k}S_{j_k}$, where $u_k$ is the $i_k$-th vertex in $\Phi_{x,z}$ and $y_k$ is the $j_k$-th vertex in $\Psi_{u_k,x}$. This works for any choice of $r\le R$ such edges. Now a pair of consecutive letters $FB$ can be indexed in $a-b+r-1$ ways but a pair of consecutive letters $FS$ can be indexed only in $(a-b-1)b-r$ ways. This yields the same maximization problem of (\ref{delta}) with the correction term
\[
\Delta=\beta^2\log\left (1+\frac{r}{a-b-1}\right )+\beta(1-2\beta)\log\left (1-\frac{r}{b(a-b-1)}\right).
\]

By letting $\beta:=c/(b+2c)$ as before this reformulates as
\[
\Delta=\frac{\beta^2}{c}\left(c\cdot \log(1+\frac{r}{c^2}) +b\cdot \log(1-\frac{r}{bc^2})\right).
\]
Straightforward calculations show that the value of $r$ that maximizes this expression is $\frac{b(c^3-c^2)}{b+c}$. So, we let $r:=\min\left(R, \lceil\frac{b(c^3-c^2)}{b+c}\rceil\right)$. In this case, when $c$ is large, we get $\Delta=\frac{r\log(e)-o_c(1)}{(b+2c)^2}$ and 
\[
    \lambda_2(G) \geq \left( b+2\sqrt{a-b-1}\right)\left(1+\Omega\left(\frac{r\log(e)}{(b+2c)^2}\right)\right)-o_n(1) 
\]
completing the proof. 
\end{proof}

\begin{rem}
\rm
If $\delta<\frac{b(a-b-1)(\sqrt{a-b-1}-1)}{(b+\sqrt{a-b-1})\min(b+1,a-b-1)}$, then
$\epsilon$ increases with $\delta$. Note that if $a\geq  b^2+5b+5$, this restriction on $\delta$ is vacuous and $\epsilon$ is always increasing, since the edge expansion of a $b$-regular graph cannot exceed $\frac{b}{2}$. 
\end{rem}

\section{The Polygraph}\label{sec:polyg}
In this section we provide a construction of infinite families of $(a,b)$-regular graphs with good local and global expansion properties. This construction is strictly combinatorial and resembles certain graph products such as Cartesian powers and tensor powers. \Tref{Poly_are_exp} is proved at the end of this section.

The following terminology is used throughout: Let $S$ be a multiset of $k$ non-negative integers and multiplicities $m_1,...,m_k$. We use the notation $S=[ l_1,\ldots,l_m]$, where $m=\sum m_i$. Let $G$ be a $d$-regular graph with $d\geq 3$ and girth bigger than $3\max(S)$. Let $\rho$ be the graph metric of $G$, i.e.\ for two vertices $x,y\in V(G)$, $\rho(x,y)$ is the length of a shortest path between them.
\begin{defin}
\rm
\emph{The Polygraph} $G_S=(V_S,E_S)$ has vertex set $V_S=V(G)^m$, and two vertices $\bar{x}=(x_1,\ldots,x_m)$ and $\bar{y}=(y_1,\ldots,y_m)$ in  $V_S$ are neighbors if and only if  $[\rho(x_1,y_1),...,\rho(x_m,y_m)]=S$ as multisets.
\end{defin}
\noindent
The \emph{distance profile} of any two vertices  $\bar{x}$ and $\bar{y}$ in  $V_S$ is $$\bar{\rho}(\bar{x},\bar{y})=(\rho(x_1,y_1),...,\rho(x_m,y_m)).$$ 
Thus, $\bar x \bar y \in E_S$ given $\bar \rho (\bar x, \bar y)=S$ as multisets.
Conversely, if $\bar\rho(\bar x,\bar y)=(d_1,...,d_m)$, then $\bar x \bar y\in E_{Z}$, where $Z=[d_1,...,d_m]$.

\begin{rem} \label{TenRem}
\rm
When $S'=[1,0,...,0]$, the polygraph $G_{S'}$ coincides with $G^{\square m}$, the $m$-th Cartesian power of $G$.
If $S=[ l_1,\ldots,l_m]$ and $N=\sum_{i=1}^m l_i$, then $G_S$ embeds in $G^{\square m}$, where every edge of $G_S$ can be mapped to a (non-unique) length $N$ non-backtracking path. Some of the proofs below involve polygraphs with the same graph $G$ and two distinct multisets $S_1, S_2$. In such situations, it is useful to embed  both $G_{S_1}$ and $G_{S_2}$ into the same $G^{\square m}$.
\end{rem}

\begin{claim} \label{regularity_GS}
The polygraph $G_S$ is $(a_S, b_S)$-regular, where $a_S$ and $b_S$ depend only on $S$.
\end{claim}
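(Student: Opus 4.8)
The plan is to show that the two local degrees depend only on the multiset $S$ and not on the chosen base graph $G$, by exploiting the high-girth hypothesis, which forces the relevant balls in $G$ to look like balls in the $d$-regular tree $\mathbb{T}_d$. First I would verify $a_S$-regularity. Fix a vertex $\bar x=(x_1,\dots,x_m)\in V_S$. A neighbor $\bar y$ is obtained by choosing, for each coordinate $i$, a vertex $y_i$ with $\rho(x_i,y_i)=e_i$, where $(e_1,\dots,e_m)$ ranges over all orderings of the multiset $S$. For a fixed such ordering, the number of choices of $y_i$ at distance exactly $e_i$ from $x_i$ is the size of the sphere of radius $e_i$ around $x_i$; since $\mathrm{girth}(G)>3\max(S)\ge 2e_i+1$, this sphere has exactly $d(d-1)^{e_i-1}$ vertices when $e_i\ge 1$ and exactly one vertex when $e_i=0$ — in particular it is independent of $x_i$ and of $G$. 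Summing the product $\prod_i |S_{e_i}(x_i)|$ over all distinct orderings $(e_1,\dots,e_m)$ of $S$ gives a number $a_S$ depending only on $S$. (One must be slightly careful when $S$ has repeated entries so as not to double-count: the orderings should be counted as distinct tuples $(e_1,\dots,e_m)$, and distinct tuples always yield disjoint sets of neighbors $\bar y$ since the distance profile $\bar\rho(\bar x,\bar y)$ recovers the tuple.)

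Next I would verify $b_S$-regularity, i.e.\ that the link $(G_S)_{\bar x}$ is $b_S$-regular for a number $b_S=b_S(S)$. Let $\bar y$ and $\bar z$ be two neighbors of $\bar x$; we must count the common neighbors of $\bar y$ and $\bar z$ among the neighbors of $\bar x$, i.e.\ the number of $\bar w\in V_S$ with $\bar\rho(\bar x,\bar w)$, $\bar\rho(\bar y,\bar w)$, $\bar\rho(\bar z,\bar w)$ all equal to $S$ as multisets, and show this count is the same for every choice of adjacent $\bar x,\bar y$ in the link-counting (and that it vanishes or not appropriately so the link is genuinely regular of one degree). The point is that this is again a purely local count in each coordinate: once we fix the three target distances $(\rho(x_i,w_i),\rho(y_i,w_i),\rho(z_i,w_i))$ in coordinate $i$, the number of admissible $w_i$ is determined by the isometry type of the configuration $\{x_i,y_i,z_i\}$ together with these distances, and because $\mathrm{girth}(G)>3\max(S)$ every ball of radius $3\max(S)$ in $G$ is a ball in $\mathbb{T}_d$, so all these local counts are $G$-independent constants. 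Aggregating over the (finitely many) distance profiles that are simultaneously orderings of $S$ in the three relations gives $b_S$ as an explicit sum depending only on $S$.

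The main obstacle — and the step I would spend the most care on — is the bookkeeping in the link: one has to (a) enumerate the possible coordinatewise types of a triple $(x_i,y_i,z_i)$ inside $\mathbb{T}_d$, subject to $\rho(x_i,y_i)$ and $\rho(x_i,z_i)$ being entries of $S$ (with girth guaranteeing the tree model is valid since $\rho(y_i,z_i)\le 2\max(S)$), (b) for each type, count $w_i$ realizing each admissible triple of distances to $x_i,y_i,z_i$, again via the tree, and (c) confirm that summing the products over all global profiles gives a constant independent of which adjacent pair $\bar x,\bar y$ we picked — this last point follows because the whole computation factors over coordinates and in each coordinate depends only on a bounded-radius neighborhood, which is a tree. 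The girth bound $3\max(S)$ is exactly what is needed: any configuration among $x_i,y_i,z_i,w_i$ lives within distance $3\max(S)$, so no cycle of $G$ can interfere and every such local count equals the corresponding count in $\mathbb{T}_d$, proving both $a_S$ and $b_S$ depend only on $S$.
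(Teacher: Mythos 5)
Your treatment of $a_S$ is correct and essentially matches the paper's: fix an ordering $(e_1,\dots,e_m)$ of $S$, multiply sphere sizes coordinatewise, and sum over orderings, with the girth bound guaranteeing that every sphere of radius $\le\max(S)$ has the tree size $d(d-1)^{e_i-1}$.

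For $b_S$, however, your setup counts the wrong quantity. You write ``Let $\bar y$ and $\bar z$ be two neighbors of $\bar x$; we must count \ldots the number of $\bar w\in V_S$ with $\bar\rho(\bar x,\bar w)$, $\bar\rho(\bar y,\bar w)$, $\bar\rho(\bar z,\bar w)$ all equal to $S$.'' That is the number of common neighbors of a fixed \emph{triangle} $\bar x\bar y\bar z$, i.e.\ the degree of the triangle in the tetrahedron graph. To show the link $(G_S)_{\bar x}$ is $b_S$-regular you must instead fix a single \emph{edge} $\bar x\bar y$ and count vertices $\bar z$ adjacent to both $\bar x$ and $\bar y$, i.e.\ those with $\bar\rho(\bar x,\bar z)=\bar\rho(\bar y,\bar z)=S$, then show this count is the same for every $\bar y$ adjacent to $\bar x$ and every $\bar x$. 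As written, your argument would establish regularity of edge-links, not vertex-links.

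Once that is fixed, your coordinatewise counting approach does work, and in fact is exactly the computation the paper records in its appendix: $b_S=\sum_{\omega,\omega'\in\Omega}\prod_{j=1}^m f_{l_j}(\omega_j,\omega'_j)$, where $f_i(j,k)$ is the number of tree vertices at distances $j$ and $k$ from the endpoints of a length-$i$ geodesic. But you still need one small symmetry observation that your write-up glosses over: the formula is written relative to a particular ordering $(e_1,\dots,e_m)=(l_1,\dots,l_m)$ of $S$ as the distance profile of $\bar x\bar y$, and you must check that permuting the coordinates of that profile does not change the total sum. This follows by the change of index $\omega\mapsto\omega\circ\sigma$, $\omega'\mapsto\omega'\circ\sigma$, but it is not, as you assert, an automatic consequence of ``the computation factors over coordinates and each coordinate is a tree''; that observation gives $G$-independence of the coordinatewise factors, not invariance of the aggregate under reordering.

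It is worth noting that the paper's own proof of the claim avoids all of this bookkeeping by a different route: it first argues that $(G_S)_{\bar x}\cong((\mathbb{T}_d)_S)_{\bar\xi}$ for all $\bar x$ (one appeal to the girth bound, rather than a coordinatewise one), and then proves regularity of the tree link by exhibiting the vertex-transitive action of $\mathrm{Stab}(\xi)\wr S_m$ on it. The symmetry argument is cleaner and gives regularity without computing anything; your direct count (once corrected) is more laborious but produces the explicit value of $b_S$ that the paper relegates to the appendix.
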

\begin{proof}
 Clearly,
\[
a_{S}=\frac{m!}{m_1!\cdot...\cdot m_k!}\cdot \prod_{j=1}^m\lfloor d(d-1)^{l_j-1}\rfloor.
\]
\noindent
The first term counts the distinct rearrangements of $S$, and for $t<\text{girth}(G)$ there are exactly $\lfloor d(d-1)^{t -1}\rfloor$ vertices at distance $t$ from any vertex in $G$. The floor is relevant only when $l_j=0$, in which case $\lfloor d(d-1)^{l_j -1}\rfloor=\lfloor\frac{d}{d-1}\rfloor=1$

We next consider $b_S$ (which may be zero). Given a vertex $\bar x=(x_1,...,x_m)$, the balls of radius $\max(S)$ around $x_i$ in $G$ are isomorphic to a ball of the same radius around some fixed vertex $\xi$ in $\mathbb{T}_d$ the $d$-regular tree. Moreover, since $\textrm{girth}(G)>3\max(S)$, if $y_i,z_i\in B_{\max(S)}(x_i)$ have $\rho(y_i,z_i)\leq \max(S)$, then the shortest path between them is contained in the ball $B_{\max(S)}(x_i)$. Thus $(G_S)_{\bar x}$, the link of $\bar{x}$ in $G_S$, is isomorphic to $((\mathbb{T}_d)_S)_{\bar \xi}$, the link of $\bar\xi =(\xi,...,\xi)$ in $(\mathbb{T}_d)_S$. Since our argument does not depend on the choice of $\bar x$, the degree $b_S$ is well-defined if we prove that $((\mathbb{T}_d)_S)_{\bar \xi}$ is regular.

The symmetric group $S_m$ acts naturally on $(\mathbb{T}_d)_S$ by permuting the coordinates. Moreover, the automorphism group $\textrm{Aut}(\mathbb{T}_d)$ acts on each coordinate separately. Thus, for $\textrm{Stab}(\xi)\leq \textrm{Aut}(\mathbb{T}_d)$ the stabilizer of $\xi\in\mathbb{T}_d$, the group $G=\textrm{Stab}(\xi) \wr S_m=(\prod_{i=1}^m \textrm{Stab}(\xi)) \rtimes S_m$  is a subgroup of the stabilizer of $\bar{\xi}$ in $\textrm{Aut}((\mathbb{T}_d)_S)$. Hence  $G$ acts on the link of $\bar{\xi}$. Since $\textrm{Stab}(\xi)$ acts transitively on spheres around $\xi$, and $S_m$ acts transitively on all distance profiles, we can conclude that $G$ acts transitively on the vertices of the link $((\mathbb{T}_d)_S)_{\bar{\xi}}$. Clearly, a graph with a transitive automorphism group is regular.
\end{proof}

We provide a closed formula for $b_S$ in the appendix. It is based on the following arithmetic criterion on $S$.

\begin{claim}\label{b_S=0}
With the above notation, $b_S>0$ iff there is a $3\times m$ matrix, every row of which is comprised of the integers $l_1,\ldots,l_m$ in some order, where every column has an even sum and satisfies the triangle inequality.
\end{claim}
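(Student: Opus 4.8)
The plan is to reduce, via \Clref{regularity_GS}, to a statement about the tree $\mathbb{T}_d$ and then to translate ``the link of $\bar\xi$ contains an edge'' into the combinatorics of a $3\times m$ distance matrix. Recall from the proof of \Clref{regularity_GS} that $(G_S)_{\bar x}\cong ((\mathbb{T}_d)_S)_{\bar\xi}$ for $\bar\xi=(\xi,\ldots,\xi)$, and that $a_S>0$ always, so $b_S>0$ precisely when this link has at least one edge; that is, when there exist vertices $\bar y=(y_1,\ldots,y_m)$ and $\bar z=(z_1,\ldots,z_m)$ of $(\mathbb{T}_d)^m$ with all three distance profiles $\bar\rho(\bar\xi,\bar y)$, $\bar\rho(\bar\xi,\bar z)$ and $\bar\rho(\bar y,\bar z)$ equal to $S$ as multisets. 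I would encode such a configuration by the $3\times m$ matrix $M$ whose $i$-th column is $\bigl(\rho(\xi,y_i),\,\rho(\xi,z_i),\,\rho(y_i,z_i)\bigr)$, and then prove the two implications separately.

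For the forward direction, take the matrix $M$ attached to any such pair $\bar y,\bar z$. Its three rows are exactly the three multisets above, each equal to $S$, so each row is a rearrangement of $l_1,\ldots,l_m$. For a fixed column, the entries are the pairwise distances among the three tree vertices $\xi,y_i,z_i$. Using the elementary fact that any three points $u,v,w$ of a tree have a median $\mu$ with $\rho(u,v)=\rho(u,\mu)+\rho(\mu,v)$ for each pair, one sees immediately that the column sum equals $2\bigl(\rho(\mu,\xi)+\rho(\mu,y_i)+\rho(\mu,z_i)\bigr)$, hence is even, and that each entry is at most the sum of the other two. So $M$ has the claimed properties.

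For the converse, start from a matrix $M$ as in the statement and build $\bar y,\bar z$ one coordinate at a time. Writing the $i$-th column as $(p,q,r)$, the even-sum and triangle-inequality hypotheses make $\alpha=\tfrac{p+q-r}{2}$, $\beta=\tfrac{p+r-q}{2}$, $\gamma=\tfrac{q+r-p}{2}$ nonnegative integers. I would then realize this triple of distances inside $\mathbb{T}_d$ while keeping the first point pinned at the fixed vertex $\xi$: walk a geodesic of length $\alpha$ from $\xi$ to a vertex $\mu_i$, continue from $\mu_i$ in a direction not leading back toward $\xi$ for length $\beta$ to obtain $y_i$, and branch off $\mu_i$ in yet another direction for length $\gamma$ to obtain $z_i$. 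Since $d\ge 3$ there are always enough directions at $\mu_i$ to keep these choices distinct. By construction $\rho(\xi,y_i)=p$, $\rho(\xi,z_i)=q$ and $\rho(y_i,z_i)=r$, so collecting coordinates gives $\bar y,\bar z$ that are neighbors of $\bar\xi$ and of each other, producing an edge in the link and hence $b_S>0$.

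The step needing the most care is this last realization inside the tree: one must check that the (at most three) geodesic directions emanating from $\mu_i$ can be chosen pairwise distinct for every $d\ge 3$, and dispose of the degenerate cases in which one or more of $\alpha,\beta,\gamma$ vanishes (for instance $p=0$ forces $y_i=\xi$, and $\alpha=0$ forces $\mu_i=\xi$, so fewer directions are needed). These are routine case checks; apart from them the proof is simply the identification from \Clref{regularity_GS} combined with the median structure of trees.
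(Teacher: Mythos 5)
Your proof is correct and takes essentially the same route as the paper: both reduce to the local tree structure (the paper works directly in $G$ using the large-girth hypothesis, you pass explicitly to $\mathbb{T}_d$ via the isomorphism from \Clref{regularity_GS}), and both directions rest on the same median-of-three-points-in-a-tree argument for necessity and a coordinate-by-coordinate realization for sufficiency. The paper's write-up is terser but the underlying ideas are identical.
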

\begin{proof}
Necessity: Assume $\bar x, \bar y, \bar z$ is a triangle in $G_S$. Since $G$'s girth is large, the three geodesic paths connecting $x_i, y_i$ and $z_i$ form a tree, whence the sum of their lengths is even and the three lengths satisfy the triangle inequality. Hence, one can construct a matrix whose rows are the distance profiles of the edges of $\bar x\bar y, \bar x\bar z, \bar y\bar z$, meaning  $\bar \rho (\bar x, \bar y),\ \bar \rho (\bar x, \bar z)$ and $\bar \rho (\bar y, \bar z)$. \\
Sufficiency is not hard either: given three integers smaller than $\text{girth}(G)$ with even sum that satisfy the triangle inequality, there are three vertices in $G$ the distances between which are these three integers. This allows us to construct $\bar x, \bar y, \bar z$ one coordinate at a time.
\end{proof}

\begin{rem}\label{even_sum}
\rm
For some of our applications we restrict ourselves to the case where $N=\sum_{i=1}^m l_i$ is even and $G$ is not bipartite. The reason is that by the claim above, $b_S=0$ when $N$ is odd. Also, if $N$ is even and $G$ is bipartite, then $G_S$ is disconnected.
\end{rem}

We suspect that it is computationally hard to decide whether the condition in Claim \ref{b_S=0} can be satisfied for a given $S$. This is no problem for small $m$, thus here is the solution for $m=3$:

\begin{claim}\label{concond}
Let $S$ be the multiset of integers $p, q, r\ge 0$. Then $b_S\ne 0$ if and only if (i) $p,q,r$ are all even, or (ii) Their sum is even and they satisfy the triangle inequality.
\end{claim}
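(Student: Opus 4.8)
The plan is to reduce everything to the combinatorial criterion of \Clref{b_S=0}: for $S=[p,q,r]$, one has $b_S>0$ if and only if there is a $3\times 3$ matrix each of whose rows is a permutation of $(p,q,r)$, such that every column has an even sum and its three entries satisfy the triangle inequality. I would then prove that such a matrix exists exactly when (i) or (ii) holds.

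For the two ``if'' directions I would just exhibit explicit matrices. If $p,q,r$ are all even, take all three rows equal to $(p,q,r)$: the columns are $(p,p,p)$, $(q,q,q)$, $(r,r,r)$, each of even sum and trivially satisfying the triangle inequality. If instead $p+q+r$ is even and $p,q,r$ satisfy the triangle inequality, take the circulant matrix with rows $(p,q,r)$, $(q,r,p)$, $(r,p,q)$: every column is then a permutation of $(p,q,r)$, hence has sum $p+q+r$ (even) and satisfies the triangle inequality by hypothesis.

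For the ``only if'' direction, suppose a valid matrix $M$ exists. Summing all entries of $M$ in two ways gives $3(p+q+r)$ equal to the sum of the three even column sums, so $p+q+r$ is even; if all of $p,q,r$ are even we are already in case (i), so assume exactly two of them — say $q$ and $r$ after relabeling — are odd and $p$ is even. I would then determine, for a single column, which multiplicity vectors $(n_p,n_q,n_r)$ with $n_p+n_q+n_r=3$ can give an even column sum: since $p$ is even and $q,r$ are odd, parity forces $n_p$ to be odd, so $n_p\in\{1,3\}$. As each of $p,q,r$ occurs exactly three times in $M$, the three columns use three $p$'s in total, and each column uses $n_p\ge 1$ of them; hence every column has $n_p=1$, so each column's multiset is one of $\{p,q,q\}$, $\{p,r,r\}$, $\{p,q,r\}$. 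A short count of the total number of $q$'s (and of $r$'s) across the three columns, both equal to $3$, forces the numbers of columns of the first two types to be equal and to be $0$ or $1$; in either case at least one column has multiset exactly $\{p,q,r\}$, and the triangle inequality required of that column is precisely condition (ii).

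I expect the only delicate point to be this last parity-and-counting step that pins down an occurrence of a column equal to $\{p,q,r\}$; everything else is the translation through \Clref{b_S=0} and routine bookkeeping.
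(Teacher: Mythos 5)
Your proof is correct and follows essentially the same route as the paper: both directions use the same two explicit matrices (the constant-rows matrix and the circulant), and the ``only if'' direction is the same parity argument. The only difference is that you spell out the final counting step (showing $n_p=1$ in every column and then that the numbers of $\{p,q,q\}$- and $\{p,r,r\}$-columns must be equal, forcing at least one $\{p,q,r\}$-column), where the paper compresses this to the one-line remark that the even number must appear in every column and hence some column contains all three values; that terser statement is really shorthand for the same parity-and-count bookkeeping you carry out.
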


\begin{proof}
In case (i) this is shown by the matrix
\[
\begin{bmatrix}
p & q & r\\
p & q & r\\
p & q & r
\end{bmatrix}.
\]
In case (ii) this is done by the matrix
\[
\begin{bmatrix}
p & q & r\\
q & r & p\\
r & p & q
\end{bmatrix}.
\]

On the other hand, assume $b_S\ne 0$. Thus, since the sum of each column of the table is even, the sum of all entries which is $3p+3q+3r$ is even and therefore $p+q+r$ must be even. If all of them are even, that is case (i). Otherwise, since the even number must appear in every column, there is a column with all three numbers and they thus satisfy the triangle inequality.
\end{proof}

\subsection{Non-backtracking paths}
Let $G$ be a graph with adjacency matrix $A_G$ and let $A_G^{(t)}$ be the matrix whose $(i,j)$-th entry is the number of length-$t$ non-backtracking paths between vertices $i$ and $j$ in $G$.
We also view $A_G^{(t)}$ as the adjacency matrix of a multigraph $G^{(t)}$. 

When $G$ is regular, $G^{(t)}$ is $d(d-1)^{t-1}$-regular. In the regular case these matrices satisfy the following recursion:
\[
\begin{split}
A_G^{(1)}&=A_G, \\ 
A_G^{(2)}&=A_G^2-d I_{n},\\ 
A_G^{(t+1)}&=A_G A_G^{(t)}-(d-1) A_G^{(t-1)}.
\end{split}
\]
The so-called Geronimus polynomials $p^{(t)}(\cdot)$ satisfy $p^{(t)}(A_G)=A_G^{(t)}$. For more on this see \cite{sole}, \cite{ABLS} or \cite{DSV}.

\begin{lem} \label{Lem_aperiodicity}
If $G$ is a connected, non-bipartite graph with minimum vertex degree at least $3$, then for every $t$, $G^{(t)}$ is a connected non-bipartite graph.
\end{lem}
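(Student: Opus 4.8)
\textbf{Plan of proof for \Lref{Lem_aperiodicity}.}
The plan is to argue both properties --- connectedness and non-bipartiteness of $G^{(t)}$ --- by tracking how walks in $G$ give rise to walks in $G^{(t)}$, and then concatenating non-backtracking segments. First I would record the elementary fact that a non-backtracking path of length $t$ between $x$ and $y$ exists whenever $x,y$ lie in a suitable portion of $G$; the minimum-degree-$\geq 3$ hypothesis is exactly what guarantees that at each step of building such a path we have at least two choices for the next vertex, so we can always avoid immediate backtracking and, with a little care, steer the endpoint. Concretely, I would first prove the following local claim: for every vertex $x$ and every neighbor $w$ of $x$, there is a closed non-backtracking walk of length $2t$ based at $x$ whose first step goes to $w$ --- equivalently, an edge of $G^{(t)}$ from $x$ to some $z$, followed by an edge of $G^{(t)}$ from $z$ back to $x$. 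This shows that every vertex of $G^{(t)}$ has an edge and, combined with the next step, that the component structure of $G^{(t)}$ refines that of $G$ in a controlled way.

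For connectedness: given two vertices $x,y$ of $G$, take a walk $x=v_0,v_1,\ldots,v_k=y$ in $G$ (which exists since $G$ is connected). It suffices to connect each consecutive pair $v_i,v_{i+1}$ in $G^{(t)}$, i.e.\ to show $v_i$ and $v_{i+1}$ lie in the same component of $G^{(t)}$. Since $v_i v_{i+1}\in E(G)$, I would produce a non-backtracking path of length $t$ from $v_i$ to $v_{i+1}$: start along the edge $v_iv_{i+1}$ if $t$ is odd-compatible, otherwise use the degree hypothesis to take a short non-backtracking detour and come back. The cleanest uniform way is: from any edge $uv$, build a non-backtracking path of length $t$ from $u$ to $v$ by first going $u\to v$, then making $t-1$ further non-backtracking steps that return to $v$; this is possible because from $v$ (degree $\geq 3$) one can take a non-backtracking excursion of any prescribed length $t-1\geq 0$ back to $v$ --- here is where non-bipartiteness of $G$ enters, since closed non-backtracking walks of a given parity exist only if $G$ has an odd cycle, which lets us adjust parity. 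Thus $v_i\sim v_{i+1}$ in $G^{(t)}$, and chaining gives $x\sim y$.

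For non-bipartiteness of $G^{(t)}$: it suffices to exhibit one odd closed walk in $G^{(t)}$. Take an odd cycle $C$ in $G$ (exists since $G$ is non-bipartite), say of length $\ell$ odd. I would traverse $C$ using non-backtracking paths of length $t$ as the $G^{(t)}$-edges: going around $C$ exactly once, splitting it edge-by-edge, we get a closed walk in $G^{(t)}$ consisting of $\ell$ many $G^{(t)}$-edges (using the previous paragraph's gadget at each edge of $C$, and matching up endpoints), hence of odd combinatorial length $\ell$ in $G^{(t)}$. Since $G^{(t)}$ has a closed walk of odd length, it is non-bipartite.

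The main obstacle is the careful parity bookkeeping in the local gadget: realizing, for a given edge $uv$ of $G$ and \emph{every} target length $t$, a non-backtracking path of length exactly $t$ from $u$ to $v$. For large $t$ this needs the odd cycle of $G$ (to switch parity) together with the degree $\geq 3$ condition (to re-enter and leave that cycle without backtracking and to pad the length); one should also check that these excursions themselves stay non-backtracking at the splice points, which is a finite local verification. Once this gadget is established with full generality in $t$, the connectedness and non-bipartiteness of $G^{(t)}$ both follow by the concatenation arguments above. I would also note in passing that for $t=1$ the statement is vacuous ($G^{(1)}=G$), and the interesting content is the $t\geq 2$ case where $G^{(t)}$ is a genuinely different (multi)graph.
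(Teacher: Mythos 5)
Your plan hinges on a \emph{gadget} that is not actually available: you claim that for every edge $uv$ of $G$ and every $t$ there is a non-backtracking walk of length exactly $t$ from $u$ to $v$, i.e.\ that $E(G)\subseteq E(G^{(t)})$. This is false, and not merely for parity reasons. Already for $t=2$ it requires $u$ and $v$ to share a common neighbor, which fails whenever $G$ is triangle-free; more generally, a non-backtracking length-$t$ walk from $u$ to $v$ (for $t\ge 2$) is forced to be a simple path once $t<\mathrm{girth}(G)$, and appending the edge $vu$ then produces a $(t+1)$-cycle, so the gadget cannot exist whenever $\mathrm{girth}(G)>t+1$. The same obstruction kills your auxiliary sub-claim that $v$ admits a closed non-backtracking excursion of every length $t-1\ge 0$: length $1$ needs a self-loop, length $2$ is an immediate backtrack, and longer lengths need short cycles through $v$ that high girth rules out. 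Since the paper invokes this Lemma precisely for high-girth base graphs, and since both your connectivity and non-bipartiteness arguments rest on the gadget, the proof as written does not go through.

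The repair is to stop insisting that $u$ and $v$ themselves become $G^{(t)}$-adjacent. For non-bipartiteness the paper's route is cleaner and needs no splicing: along an odd cycle $C=(c_0,\ldots,c_{\ell-1})$, stepping $t$ positions \emph{around $C$} is already a non-backtracking $G$-walk of length $t$, so each $c_i c_{i+t \bmod \ell}$ is an edge of $G^{(t)}$, and these edges close into a cycle of odd length $\ell/\gcd(\ell,t)$. For connectivity, the paper first checks that $G^{(2)}$ is connected (the odd cycle spans a connected subgraph of $G^{(2)}$, and every vertex reaches it via $G^{(2)}$-steps), and then, for any $uv\in E(G^{(2)})$ with common $G$-neighbor $w$, uses $d_G(w)\ge 3$ to launch a non-backtracking walk of length $t-1$ from $w$ whose first step avoids both $u$ and $v$; prepending $u\to w$ or $v\to w$ gives two non-backtracking length-$t$ walks ending at the same vertex $z$, so $u$ and $v$ share the $G^{(t)}$-neighbor $z$. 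This uses the minimum degree only to branch \emph{outward} and never requires the walk to return, so it is immune to the girth obstruction that defeats your gadget.
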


\begin{proof}
To show that $G^{(t)}$ is non-bipartite, let $A$ be the vertex set of an odd-length cycle in $G$. Then the subgraph of $G^{(t)}$ spanned by $A$ contains an odd cycle.

Clearly $A$ spans a connected subgraph of $G^{(2)}$, and every vertex can reach $A$ by a path in $G^{(2)}$. To prove that $G^{(t)}$ is connected for $t\ge 3$ we show that if $uv\in E(G^{(2)})$, then $u, v$ are in the same component of $G^{(t)}$. Since $uv\in E(G^{(2)})$, they have a common neighbor $w$ in $E(G)$. Since $d_G(w)\ge 3$, there is a length-$(t-1)$ path $P$ starting at $w$ whose first step is neither to $u$ nor $v$. The claim follows, since $P$'s last vertex is a neighbor of both $u$ and $v$ in $G^{(t)}$. 
\end{proof}

\subsection{Connectivity and Spectral analysis of $G_S$}
Let $\Omega=\binom{[m]}{m_1,\ldots,m_k}$ be the set of rearrangements $\omega=(\omega_1,\ldots,\omega_m)$ of $S=[ l_1,\ldots,l_m]$. Thus 
\[
A_{G_S}=\sum_{\omega\in \Omega} \bigotimes_{j=1}^m A_G^{(\omega_j)}
\]
where $\otimes$ is the Kronecker tensor product.

If $v$ is an eigenvector of $A$ with eigenvalue $\lambda$, likewise for $v', A'$ and $\lambda'$, then $v\otimes v'$ is an eigenvector of $A\otimes A'$ with eigenvalue $\lambda\lambda'$. Also, $A_G^{(t)}$ and $A_G$ have the same eigenvectors, since $A_G^{(t)}$ is a polynomial in $A_G$. It follows that every eigenvector of $A_{G_S}$ has the form $v_1\otimes  ...\otimes v_m$ where each $v_i$ is an eigenvector of $A_G$'s. Moreover, by going through all such choices of $v_1,\ldots,v_m$ we obtain the full list of eigenvectors. The eigenvalue of $v_1\otimes  ...\otimes v_m$ is 
\[
\chi (\lambda_1,...,\lambda_m)=\chi_{S}(\lambda_1,...,\lambda_m) = \sum_{\omega\in \Omega}\prod_{j=1}^m p^{(\omega_j)}(\lambda_j)
\]
where $\lambda_i$ the eigenvalue of $v_i$, and $p^{(t)}(x)$ is the $t$-th Geronimus polynomial mentioned above. We obtain all the eigenvalues of $A_{G_{S}}$ by evaluating the symmetric polynomial $\chi_{S}$ on all $m$-tuples of eigenvalues $(\lambda_1,...,\lambda_m)$.

\begin{claim}\label{G_S-connected}
Let $G$ be a connected non-bipartite $d$-regular graph and $S$ a multiset of non-negative integers, not all zero. Then $G_S$ is connected and non-bipartite.
\end{claim}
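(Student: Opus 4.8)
The plan is to read off connectivity and non-bipartiteness from the spectral description of $A_{G_S}$ established just above the claim. Every eigenvalue of $A_{G_S}$ is of the form $\chi_S(\lambda_1,\ldots,\lambda_m)=\sum_{\omega\in\Omega}\prod_{j=1}^m p^{(\omega_j)}(\lambda_j)$, where $\lambda_1,\ldots,\lambda_m$ are eigenvalues of $A_G$, and the largest eigenvalue, attained at $\mathbf{1}\otimes\cdots\otimes\mathbf{1}$, is $a_S=\chi_S(d,\ldots,d)=\sum_{\omega\in\Omega}\prod_{j=1}^m p^{(\omega_j)}(d)$. Since $G_S$ is $a_S$-regular by \Clref{regularity_GS}, it is enough to prove the single inequality $|\chi_S(\lambda_1,\ldots,\lambda_m)|<a_S$ for every tuple of $A_G$-eigenvalues other than $(d,\ldots,d)$: this shows simultaneously that $a_S$ is a simple eigenvalue of $A_{G_S}$, hence $G_S$ is connected, and that $-a_S$ is not an eigenvalue of $A_{G_S}$, hence $G_S$ is non-bipartite (an $a_S$-regular graph being bipartite only if $-a_S$ is an eigenvalue).

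The key step is a strict Perron-type bound on the Geronimus polynomials. For $t\ge 1$ the matrix $A_G^{(t)}=p^{(t)}(A_G)$ is the adjacency matrix of the $d(d-1)^{t-1}$-regular multigraph $G^{(t)}$, so $|p^{(t)}(\lambda)|\le d(d-1)^{t-1}=p^{(t)}(d)$ for every eigenvalue $\lambda$ of $A_G$ (and trivially $p^{(0)}(\lambda)=1=p^{(0)}(d)$). I claim that this inequality is strict whenever $t\ge 1$ and $\lambda\ne d$. Indeed, by \Lref{Lem_aperiodicity} the multigraph $G^{(t)}$ is connected and non-bipartite, so $d(d-1)^{t-1}$ is a simple eigenvalue of $A_G^{(t)}$ with eigenvector $\mathbf{1}$ and $-d(d-1)^{t-1}$ is not an eigenvalue of $A_G^{(t)}$. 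Since $A_G$ and $A_G^{(t)}$ are simultaneously diagonalizable and $\mathbf{1}$ is the $A_G$-eigenvector for $d$, the only eigenvalue $\lambda$ of $A_G$ with $p^{(t)}(\lambda)=d(d-1)^{t-1}$ is $\lambda=d$; combined with $p^{(t)}(\lambda)\ne -d(d-1)^{t-1}$ and $|p^{(t)}(\lambda)|\le d(d-1)^{t-1}$ this yields $|p^{(t)}(\lambda)|<d(d-1)^{t-1}$ for $\lambda\ne d$.

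Finally, given a tuple $(\lambda_1,\ldots,\lambda_m)\ne(d,\ldots,d)$, I would pick a coordinate $j_0$ with $\lambda_{j_0}\ne d$. Since $S$ is not the all-zero multiset, some entry of $S$ is positive, so there is a rearrangement $\omega^{\ast}\in\Omega$ with $\omega^{\ast}_{j_0}\ge 1$. By the previous paragraph $|p^{(\omega^{\ast}_{j_0})}(\lambda_{j_0})|<p^{(\omega^{\ast}_{j_0})}(d)$, while for every $\omega\in\Omega$ and every $j$ one has $|p^{(\omega_j)}(\lambda_j)|\le p^{(\omega_j)}(d)$, and all the numbers $p^{(\omega_j)}(d)$ are positive. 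Hence $\prod_{j}|p^{(\omega^{\ast}_j)}(\lambda_j)|<\prod_{j}p^{(\omega^{\ast}_j)}(d)$, whereas $\prod_{j}|p^{(\omega_j)}(\lambda_j)|\le\prod_{j}p^{(\omega_j)}(d)$ for all other $\omega$. Summing over $\Omega$ and using the triangle inequality, $|\chi_S(\lambda_1,\ldots,\lambda_m)|\le\sum_{\omega\in\Omega}\prod_{j}|p^{(\omega_j)}(\lambda_j)|<\sum_{\omega\in\Omega}\prod_{j}p^{(\omega_j)}(d)=a_S$, which is exactly what is needed.

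I expect the only genuinely delicate point to be the strict inequality $|p^{(t)}(\lambda)|<p^{(t)}(d)$ for $\lambda\ne d$; this is where all three hypotheses on $G$ (connected, non-bipartite, regularity at least $3$) are used, via \Lref{Lem_aperiodicity}. The remainder is a short triangle-inequality bookkeeping argument, and the hypothesis that $S$ is not identically zero enters only to supply the rearrangement $\omega^{\ast}$ with a positive entry in the chosen coordinate.
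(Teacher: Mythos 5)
Your proposal is correct and follows essentially the same route as the paper's proof: reduce both claims to the single inequality $|\chi_S(\lambda_1,\ldots,\lambda_m)|<a_S=\chi_S(d,\ldots,d)$ for every tuple of $A_G$-eigenvalues other than $(d,\ldots,d)$, and obtain the needed strictness from Lemma~\ref{Lem_aperiodicity} via $|p^{(t)}(\lambda)|<p^{(t)}(d)$ when $t\ge 1$ and $\lambda\ne d$. You merely unpack two steps the paper leaves implicit — how connectivity and non-bipartiteness of $G^{(t)}$ force the strict Perron bound, and why $S$ not being identically zero guarantees a rearrangement $\omega^{\ast}$ that makes the final sum strictly smaller — but the argument is the same.
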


\begin{proof}
It suffices to prove that for every choice of $(\lambda_1,...,\lambda_m)$, eigenvalues of $G$, not all $d$, we have
\[
\vert\chi(\lambda_1,...,\lambda_m)\vert < a_S=\chi(d,...,d).
\]
Since $p^{(t)}(d)$ is the common degree of the vertices of $A_G^{(d)}$, it must be the largest eigenvalue of this matrix. Thus, for every eigenvalue $\lambda_j$ of $A$ we have $\vert p^{(t)}(\lambda_j)\vert\leq p^{(t)}(d)$. By \Lref{Lem_aperiodicity}, when $t>0$ and $\lambda_i\neq d$ we have also $\vert p^{(t)}(\lambda_j)\vert< p^{(t)}(d)$. Now
\[
\begin{split}
\vert\chi(\lambda_1,...,\lambda_m)\vert &\leq  \sum_{\omega\in \Omega}\prod_{j=1}^m \vert p^{(\omega_j)}(\lambda_j)\vert <\sum_{\omega\in \Omega}\prod_{j=1}^m p^{(\omega_j)}(d)
\end{split}
\]
and the inequality is strict since for some $j$ and $\omega$ we have both $\lambda_j\neq d$ and $\omega_j\neq 0$.
\end{proof}
 
We now seek stronger bounds on $G_S$'s spectral gap under appropriate assumptions on $G$. The following lemma is used in our analysis of geometric overlap.
\begin{lem} \label{Geroni}
Let $\alpha\in(-2\sqrt{d-1}, 2\sqrt{d-1})$ and $|\beta|>2\sqrt{2}\sqrt{d-1}$. Then $|p^{(t)}(\beta)|>|p^{(t)}(\alpha)|$, where $p^{(t)}$ is the $t$-th Geronimus Polynomial. Also, if $|x| \ge 2\sqrt{d-1}$, then $|p^{(t)}(x)| < |x|^t$.
\end{lem}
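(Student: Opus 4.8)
The plan is to reduce everything to the classical explicit form of the Geronimus polynomials. Summing the recursion gives the generating identity $\sum_{t\ge 0}p^{(t)}(x)z^t=\frac{1-z^2}{1-xz+(d-1)z^2}$, and a partial--fraction expansion in the roots $z_{\pm}=\tfrac12\bigl(x\pm\sqrt{x^2-4(d-1)}\bigr)$ of $1-xz+(d-1)z^2$ (note $z_+z_-=d-1$) yields, for $t\ge 1$,
\[
p^{(t)}(x)=\frac{(z_+^{t+1}-z_-^{t+1})-(z_+^{t-1}-z_-^{t-1})}{z_+-z_-}.
\]
Substituting $x=2\sqrt{d-1}\cos\theta$ when $|x|\le 2\sqrt{d-1}$ (so $z_\pm=\sqrt{d-1}\,e^{\pm i\theta}$) and $x=\pm2\sqrt{d-1}\cosh s$ when $|x|\ge 2\sqrt{d-1}$ (so $z_\pm=\pm\sqrt{d-1}\,e^{\pm s}$), this turns into
\[
p^{(t)}(x)=(d-1)^{t/2}\left(\frac{\sin((t+1)\theta)}{\sin\theta}-\frac{1}{d-1}\,\frac{\sin((t-1)\theta)}{\sin\theta}\right),
\]
respectively
\[
p^{(t)}\!\left(\pm2\sqrt{d-1}\cosh s\right)=(\pm1)^t(d-1)^{t/2}\,\frac{\sinh((t+1)s)-\frac{1}{d-1}\sinh((t-1)s)}{\sinh s}.
\]
In particular $p^{(t)}(-x)=(-1)^tp^{(t)}(x)$, so in both assertions I may assume $x,\beta>0$.

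For the second assertion I write $x=2\sqrt{d-1}\cosh s$ with $s>0$ (the case $s=0$ is immediate from $p^{(t)}(2\sqrt{d-1})=(d-1)^{t/2}(t+1-\tfrac{t-1}{d-1})$ together with $t+1<2^t$ for $t\ge 2$). Since $|x|^t=2^t(d-1)^{t/2}\cosh^t s$, it suffices to prove $\sinh((t+1)s)-\tfrac1{d-1}\sinh((t-1)s)<2^t\cosh^t(s)\,\sinh s$. The key computation is the telescoping expansion
\[
(e^s+e^{-s})^t(e^s-e^{-s})=\sum_{k=0}^{\lfloor t/2\rfloor}\left(\binom tk-\binom t{k-1}\right)\left(e^{(t+1-2k)s}-e^{-(t+1-2k)s}\right),
\]
obtained by reindexing, where the terms for $k$ and for $t+1-k$ combine with opposite sign; all coefficients are nonnegative for $k\le t/2$. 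Isolating the $k=0$ and $k=1$ terms gives $2^t\cosh^t(s)\sinh s\ \ge\ \sinh((t+1)s)+(t-1)\sinh((t-1)s)\ >\ \sinh((t+1)s)-\tfrac1{d-1}\sinh((t-1)s)$ for $t\ge 2$, $s>0$, which is exactly what is needed. (For $t=1$ one gets equality $|p^{(1)}(x)|=|x|$, so the strict inequality in the statement is meant for $t\ge 2$.)

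For the first assertion I bound the two sides separately. Using $|\sin(k\theta)/\sin\theta|\le k$ for every nonnegative integer $k$ (this is $|U_{k-1}(\cos\theta)|\le k$) and $d\ge 3$, the bulk formula gives $|p^{(t)}(\alpha)|\le (d-1)^{t/2}\bigl((t+1)+\tfrac{t-1}{d-1}\bigr)\le 2t\,(d-1)^{t/2}$. For the lower bound, assume $\beta>2\sqrt2\sqrt{d-1}$ and write $\beta=2\sqrt{d-1}\cosh s$; then $\cosh s>\sqrt2$ forces $e^s>1+\sqrt2$, and since $\tfrac1{d-1}\le\tfrac12$ and $\sinh((t-1)s)\le\sinh((t+1)s)$,
\[
p^{(t)}(\beta)\ \ge\ \frac{(d-1)^{t/2}}{2}\cdot\frac{\sinh((t+1)s)}{\sinh s}\ \ge\ \frac{(d-1)^{t/2}}{2}\,e^{ts}\ >\ \frac{(d-1)^{t/2}}{2}\,(1+\sqrt2)^t,
\]
where $\frac{\sinh((t+1)s)}{\sinh s}=\sum_{j=0}^{t}e^{(t-2j)s}\ge e^{ts}$. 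Comparing the two bounds, it suffices that $(1+\sqrt2)^t>4t$, which holds for all $t\ge 3$ (and $(1+\sqrt2)^t/t$ is increasing in $t$). The cases $t=1,2$ are trivial directly: for $t=1$, $|p^{(1)}(\beta)|=|\beta|>2\sqrt2\sqrt{d-1}>2\sqrt{d-1}>|\alpha|=|p^{(1)}(\alpha)|$; for $t=2$, $p^{(2)}(x)=x^2-d$, and $\alpha^2<4(d-1)$, $\beta^2>8(d-1)$ give $|p^{(2)}(\alpha)|<3d-4\le 7d-8<|p^{(2)}(\beta)|$ for $d\ge 3$.

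I expect the main obstacle to be the small--$t$ behaviour: the exponential lower bound $(1+\sqrt2)^t$ for $p^{(t)}(\beta)$ only overtakes the $\Theta\!\bigl(t\,(d-1)^{t/2}\bigr)$ size that $p^{(t)}(\alpha)$ genuinely attains near $\alpha=\pm2\sqrt{d-1}$ (where $|U_t(\cos\theta)|$ is of order $t$) once $t\ge 3$, so $t=1,2$ have to be handled by hand as above; the constant $2\sqrt2$ is precisely what is needed to make $e^s>1+\sqrt2$ at the threshold. A secondary delicate point is that the second assertion is \emph{false} without the $-\tfrac1{d-1}\sinh((t-1)s)$ correction term — dropping it makes the naive comparison go the wrong way — so the argument really does require the nonnegative telescoping expansion of $(e^s+e^{-s})^t(e^s-e^{-s})$, and getting the signs of its coefficients right is the one genuine computation.
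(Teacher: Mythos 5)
Your proof is correct, but it takes a genuinely different and considerably more computational route than the paper's. The paper uses a single structural fact about $p^{(t)}$, cited from \cite{lmn}: all of its roots are real, lie in $(-2\sqrt{d-1},2\sqrt{d-1})$, and come in pairs $\pm\rho$, plus a simple root at $0$ when $t$ is odd. Consequently $p^{(t)}(x)=x^{t\bmod 2}\prod_i(x^2-\rho_i^2)$ with each $\rho_i\in(0,2\sqrt{d-1})$, and both assertions drop out by termwise comparison of the factors: if $\alpha^2,\rho^2<4(d-1)$ and $\beta^2>8(d-1)$, then $|\alpha^2-\rho^2|<4(d-1)<\beta^2-\rho^2$ (and $|\alpha|<|\beta|$ for the loose linear factor), which gives the first claim uniformly in $t$; and $|x^2-\rho_i^2|<x^2$ for $x^2\ge 4(d-1)>\rho_i^2$ gives the second. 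No generating function, no partial fractions, no hyperbolic substitution, no small-$t$ case split. By contrast, you derive the explicit $\tfrac{(z_+^{t+1}-z_-^{t+1})-(z_+^{t-1}-z_-^{t-1})}{z_+-z_-}$ formula from the generating function, pass to trigonometric and hyperbolic forms, and then carry out a sharp-looking but intricate comparison that requires the nonnegative telescoping of $(e^s+e^{-s})^t(e^s-e^{-s})$ for the second claim and a separate treatment of $t=1,2$ for the first. Your route is more self-contained (you never invoke the location of the roots, you derive everything from the recursion), and it yields quantitative lower bounds such as $|p^{(t)}(\beta)|\ge\tfrac12(d-1)^{t/2}(1+\sqrt 2)^t$ as by-products; the price is substantially more casework, and a need to flag the borderline equality $|p^{(1)}(x)|=|x|$, which also quietly lurks in the paper's statement. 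Minor slip: your $z_\pm=\tfrac12\bigl(x\pm\sqrt{x^2-4(d-1)}\bigr)$ are the roots of $z^2-xz+(d-1)$, not of $1-xz+(d-1)z^2$ (those are $z_\mp/(d-1)$); the identities $z_++z_-=x$ and $z_+z_-=d-1$ that you actually use are nevertheless correct, so this does not affect the argument. Also, the reasons the threshold $2\sqrt 2$ emerges differ between the two proofs: for the paper it is what makes $\beta^2-\rho^2>4(d-1)>|\alpha^2-\rho^2|$, while for you it is what forces $\cosh s>\sqrt 2$, i.e.\ $e^s>1+\sqrt 2$; both are artifacts of the respective estimates rather than an intrinsic constant of the lemma.
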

\begin{proof}
All we need to know about $p^{(t)}$ is that all its roots are real, they reside in $(-2\sqrt{d-1}, 2\sqrt{d-1})$ and come in pairs $\pm \rho$  plus a root at zero for odd $t$ (e.g., \cite{lmn}). Thus, $p^{(t)}(x)=x^{t\bmod 2}\prod (x^2-\rho^2)$. To prove the claim, compare between $p^{(t)}(\alpha)$ and $p^{(t)}(\beta)$ term by term, i.e., verify that $|\beta^2-\rho^2| > |\alpha^2-\rho^2|$ whenever $\alpha, \rho \in (-2\sqrt{d-1}, 2\sqrt{d-1})$ and $|\beta|>2\sqrt{2}\sqrt{d-1}$.\\
The equality $p^{(t)}(x)=x^{t\bmod 2}\prod (x^2-\rho^2)$ readily yields
the second claim.
\end{proof}
 
Therefore, if $\lambda(G)\geq 2\sqrt{2}\sqrt{d-1}$, then we have
\[
 \begin{split}
 \lambda(G_S)&=\chi(\lambda(G),d,...,d)\\
 &=\sum_{\omega\in \Omega}p^{(\omega_1)}(\lambda(G))\prod_{j=2}^m p^{(\omega_j)}(d)\\
 &=\sum_{i=1}^m p^{(l_i)}(\lambda(G))\cdot \frac{(m-1)!\cdot m_i}{m_1!\cdot \dots \cdot m_k!}\prod_{j\neq i}\lfloor d(d-1)^{l_j-1}\rfloor.
 \end{split}
 \]
 This formula is nice, but we are interested in  a more practical bound on $\lambda(G)$. Thus the following:
 
 \begin{prop} \label{lambdaGS}
 Let $G$ be a $d$-regular graph and let $S$ be a multiset of non-negative integers, the smallest of which is $s$, whose sum is $N$. Then, letting $\mu=\max(\lambda(G),2\sqrt{d-1})$, we have  $$\lambda(G_S)\le \binom{m}{m_1,...,m_k} \mu^s d^{k-1}(d-1)^{N-k-s+1}.$$
\end{prop}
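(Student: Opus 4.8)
The plan is to read off the spectrum of $A_{G_S}$ from the computation done earlier in the section: every eigenvalue of $A_{G_S}$ equals $\chi_S(\lambda_1,\ldots,\lambda_m)=\sum_{\omega\in\Omega}\prod_{j=1}^m p^{(\omega_j)}(\lambda_j)$ for some $m$-tuple $(\lambda_1,\ldots,\lambda_m)$ of eigenvalues of $A_G$, with the Perron value $a_S$ attained at $(d,\ldots,d)$. Hence $\lambda(G_S)=\max|\chi_S(\lambda_1,\ldots,\lambda_m)|$, the maximum taken over all tuples other than $(d,\ldots,d)$. Fixing such a tuple, since not all $\lambda_j$ equal $d$ and $\chi_S$ is symmetric in its arguments, after relabeling I may assume $\lambda_1\neq d$, so that $|\lambda_1|\le\lambda(G)\le\mu$.

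Next I would bound $|\chi_S|$ factor by factor. For the coordinates $j\ge 2$ I use $|p^{(\omega_j)}(\lambda_j)|\le p^{(\omega_j)}(d)=\lfloor d(d-1)^{\omega_j-1}\rfloor$, which holds because $p^{(\omega_j)}(d)$ is the common degree, hence the Perron eigenvalue, of the non-negative matrix $A_G^{(\omega_j)}$. For the first coordinate I would invoke \Lref{Geroni}: if $|\lambda_1|\ge 2\sqrt{d-1}$ its second assertion gives $|p^{(\omega_1)}(\lambda_1)|<|\lambda_1|^{\omega_1}\le\mu^{\omega_1}$, while if $|\lambda_1|<2\sqrt{d-1}$ I use the factorization $p^{(t)}(x)=x^{t\bmod 2}\prod_\rho(x^2-\rho^2)$ recalled in that proof, in which every root $\rho$ lies in $(-2\sqrt{d-1},2\sqrt{d-1})$; then each factor has absolute value at most $4(d-1)\le\mu^2$, so $|p^{(\omega_1)}(\lambda_1)|\le(2\sqrt{d-1})^{\omega_1}\le\mu^{\omega_1}$. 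Either way $|p^{(\omega_1)}(\lambda_1)|\le\mu^{\omega_1}$, and the triangle inequality yields
\[
\lambda(G_S)\ \le\ \sum_{\omega\in\Omega}\mu^{\omega_1}\prod_{j=2}^m \lfloor d(d-1)^{\omega_j-1}\rfloor .
\]

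Finally I would evaluate this sum. The point is that in the range where the statement is of interest $\mu\le d-1$ (indeed $\mu=2\sqrt{d-1}\le d-1$ once $d$ is not too small, and $\lambda(G)\le d-1$ for a graph with a non-negligible gap), so since $\omega_1\ge s$ we get $\mu^{\omega_1}\le\mu^{s}(d-1)^{\omega_1-s}$; combining this with $\prod_{j\ge 2}\lfloor d(d-1)^{\omega_j-1}\rfloor\le d^{m-1}(d-1)^{N-\omega_1-(m-1)}$ makes the dependence on $\omega_1$ cancel, so each summand is at most $\mu^{s}d^{m-1}(d-1)^{N-m-s+1}$. Since there are $|\Omega|=\binom{m}{m_1,\ldots,m_k}$ summands this already gives a bound of the stated shape; keeping track of the coordinates on which $S$ takes the value $0$, where the floor equals $1$ rather than $d/(d-1)$, sharpens the exponents to the claimed $d^{k-1}(d-1)^{N-k-s+1}$.

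I expect the last step — converting the crude per-summand estimate into the exact exponents, in particular the bookkeeping with the floor functions and with the hypothesis $\mu\le d-1$ — to be the main technical obstacle; the reduction to $\max|\chi_S|$ over non-trivial tuples and the Geronimus estimates are routine given \Lref{Geroni} and the spectral analysis already set up.
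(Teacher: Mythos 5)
Your reduction and the main estimates follow the same route that the paper's one-line proof (``Similar to the previous calculation using the second statement of Lemma~\ref{Geroni}'') is pointing at: write $\lambda(G_S)$ as the largest value of $|\chi_S|$ over nontrivial eigenvalue tuples, use the symmetry of $\chi_S$ to put the nontrivial eigenvalue in the first slot, bound that coordinate by $\mu^{\omega_1}$ (the paper only cites the $|x|\ge2\sqrt{d-1}$ half of Lemma~\ref{Geroni}; your supplementary factorization argument for $|\lambda_1|<2\sqrt{d-1}$ is a correct and necessary addition), and Perron-bound the remaining coordinates by $p^{(\omega_j)}(d)$. Up to the line $\lambda(G_S)\le\sum_{\omega\in\Omega}\mu^{\omega_1}\prod_{j\ge2}\lfloor d(d-1)^{\omega_j-1}\rfloor$ your proof is sound and matches the paper's intent.

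Your final step is where the argument breaks. First, you correctly observe that $\mu^{\omega_1}\le\mu^s(d-1)^{\omega_1-s}$ requires $\mu\le d-1$, which the Proposition does not assume; this is a real gap (try $S=[1,2]$, $\mu=d$: the intermediate sum is $d^2(2d-1)$, exceeding the claimed $2d^2(d-1)$), although it appears to be an implicit hypothesis in the paper as well, since all of the paper's applications have $\mu<d-1$. Second, and more seriously, the claim that bookkeeping with zeros upgrades your per-summand bound $\mu^s d^{m-1}(d-1)^{N-m-s+1}$ to the stated $\mu^s d^{k-1}(d-1)^{N-k-s+1}$ is false: since $k\le m$, the bound with $k$ is \emph{stronger} by a factor $(d/(d-1))^{m-k}$, and when $S$ has repeated nonzero entries there is nothing to track. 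For $S=[1,1,2]$ (no zeros, $m=3$, $k=2$) the summand $\omega=(1,1,2)$ contributes $\mu d^2(d-1)$, which already exceeds the per-summand budget $\mu d(d-1)^2$; indeed the aggregate intermediate sum $2\mu d^2(d-1)+\mu^2d^2$ exceeds $3\mu d(d-1)^2$ even for $d=9$, $\mu=2\sqrt8$. What is actually going on is that the exponent in the stated Proposition is almost certainly a typo: the two places the paper invokes it, namely $\lambda(G_{[1,1,1]})\le\mu d^2$ and $\lambda(G_{[1,2,3]})\le6\mu d^2(d-1)^3$, are both consistent with replacing $k$ by the number of \emph{positive} entries of $S$ counted with multiplicity (which equals $m$ for zero-free $S$). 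With that reading your $d^{m-1}$ per-summand bound is exactly right in the zero-free case, and the zeros give $d^{k'-1}$ with $k'$ that count; so your method is correct, but the ``sharpening to $k$'' sentence should be deleted, the $\mu\le d-1$ hypothesis should be made explicit, and the discrepancy with the statement as printed should be flagged rather than papered over.
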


\begin{proof}
Similar to the previous calculation using the second statement of \Lref{Geroni}.

\end{proof}

\subsection{ The Triangles of $G_S$} 
\label{Triangles}
We seek a structural description of the triangles in $G_S$, where $S=[l_1,...,l_m]$. We may assume, of course, that $N=\sum_{i=1}^m l_i$ is even, for otherwise (\Clref{b_S=0}) $G_S$ is triangle-free. Two facts should be kept in mind. Throughout the paper we always posit that $\text{girth}(G)$ is "sufficiently large", whence $G$'s relevant local structure coincides with that of $\mathbb{T}_d$, the $d$-regular tree. Also, by Remark \ref{TenRem}, $G_S$ embeds into $G^{\square m}$ by mapping edges to geodesic length-$N$ paths. 

A simple but useful property of trees $T$ is that every triple of vertices $x,y,z\in V(T)$ has a unique \emph{center}. This is the vertex $c$ that minimizes $\rho_T(x,\cdot)+\rho_T(y,\cdot)+\rho_T(z,\cdot)$. It is also characterized by the condition $\rho_T(x,c)+\rho_T(y,c)+\rho_T(z,c)=(\rho_T(x,y)+\rho_T(y,z)+\rho_T(z,x))/2$: By the triangle inequality 
\[
2(\rho_T(x,c)+\rho_T(y,c)+\rho_T(z,c))
\leq \rho_T(x,y)+\rho_T(z,x)+\rho_T(y,z).
\] 
On the other hand, consider the finite tree induced by the three vertices $x,y,z$, namely, the triangle $\triangle xyz$. It has a single vertex where its three branches meet. This may coincide with one of the original vertices of the triangle, see the two cases in Figure \ref{Triangles_in_a_tree}. This meeting point is $c$, for which the above clearly holds with equality.
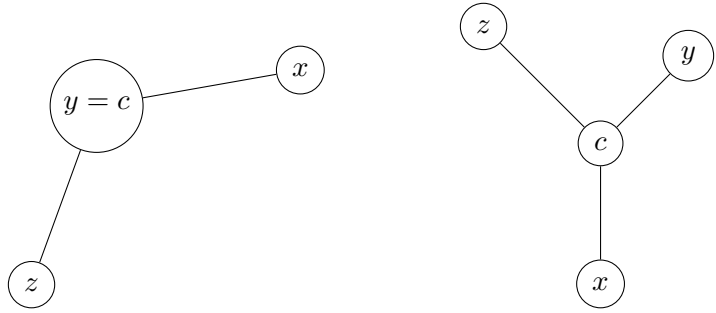
\begin{figure}[]\label{Triangles_in_a_tree}
    \centering
    \caption{Possible triangles $\triangle xyz$ in a tree. \textbf{The edges represent paths of some length}.}
    \label{fig:TriG123}
\begin{center}

\begin{tikzpicture}[scale=1.1]
\node[draw,shape=circle] (w0) at (0:0) {$y=c$};
\node[draw,shape=circle] (w1) at (10:2.5) {$x$};
\node[draw,shape=circle] (w3) at (250:2.3)  {$z$};
\draw (w0) -- (w1)
(w0) -- (w3);
\end{tikzpicture}
\qquad  \qquad \ 
\begin{tikzpicture}[scale=1.1]
\node[draw,shape=circle] (w31) at (0:0) {$c$};
\node[draw,shape=circle] (u3) at (270:1.7) {$x$};
\node[draw,shape=circle] (x3) at (45:1.5) {$y$};
\node[draw,shape=circle] (v3) at (135:2) {$z$};
\draw (w31) -- (u3)
(w31) -- (x3)
(w31) -- (v3);
\end{tikzpicture}
\end{center}
\end{figure}

More generally, we define the \emph{center} $\bar c=(c_1,...,c_m)$ of a triple $\bar x\bar y \bar z$ in $(\mathbb{T}_d)_S$, by taking, for $1\le i\le m$ the vertex $c_i$ as the center of $x_i,y_i,z_i$ in $\mathbb{T}_d$.
Let $\delta$ be the metric of $\mathbb{T}_d^{\square m}$, i.e., the coordinate-wise sum of $\rho_T$ distances. We say that $\bar w$ is {\em a midpoint} of $\bar u, \bar v$ (all in $(\mathbb{T}_d)_S$) if $ \delta(\bar u, \bar w)=\delta(\bar v, \bar w)= \delta(\bar u,\bar v)/2$.

\begin{claim}
The center $\bar c$ of a triangle $\bar x, \bar y, \bar z$ in $(\mathbb{T}_d)_S$ is a midpoint of each of its three edges.
\end{claim}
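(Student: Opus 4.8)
The plan is to reduce the statement to a coordinate-wise claim in the single tree $\mathbb{T}_d$, where it becomes the characterization of the center already recorded just before the claim. Recall that for a triple $x_i,y_i,z_i$ in $\mathbb{T}_d$ the center $c_i$ satisfies
\[
\rho_T(x_i,c_i)+\rho_T(y_i,c_i)+\rho_T(z_i,c_i)=\tfrac12\bigl(\rho_T(x_i,y_i)+\rho_T(y_i,z_i)+\rho_T(z_i,x_i)\bigr),
\]
and moreover $c_i$ lies on each of the three geodesics $[x_i,y_i]$, $[y_i,z_i]$, $[z_i,x_i]$ (this is visible from the two pictures in Figure \ref{fig:TriG123}: $c_i$ is the unique branch point of the induced finite tree on $\{x_i,y_i,z_i\}$, so it sits between any two of them). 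Consequently, for the edge $\bar x\bar y$, say, we have for every coordinate $i$ that $\rho_T(x_i,c_i)+\rho_T(c_i,y_i)=\rho_T(x_i,y_i)$, because $c_i$ is on the geodesic from $x_i$ to $y_i$.

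First I would sum this coordinate identity over $i=1,\dots,m$ to get $\delta(\bar x,\bar c)+\delta(\bar c,\bar y)=\delta(\bar x,\bar y)$, where $\delta$ is the $\ell^1$-type metric on $\mathbb{T}_d^{\square m}$. Then I would separately add up the center characterization over all $i$: summing
\[
\rho_T(x_i,c_i)+\rho_T(y_i,c_i)+\rho_T(z_i,c_i)=\tfrac12\bigl(\rho_T(x_i,y_i)+\rho_T(y_i,z_i)+\rho_T(z_i,x_i)\bigr)
\]
gives $\delta(\bar x,\bar c)+\delta(\bar y,\bar c)+\delta(\bar z,\bar c)=\tfrac12\bigl(\delta(\bar x,\bar y)+\delta(\bar y,\bar z)+\delta(\bar z,\bar x)\bigr)$. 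Since $\bar x,\bar y,\bar z$ form a triangle in $(\mathbb{T}_d)_S$, all three of $\delta(\bar x,\bar y)$, $\delta(\bar y,\bar z)$, $\delta(\bar z,\bar x)$ equal $N=\sum_i l_i$ (the edge relation in the polygraph forces the multiset of coordinate distances to be $S$, hence their sum is $N$). Therefore the right-hand side is $\tfrac{3N}{2}$, and combined with $\delta(\bar x,\bar c)+\delta(\bar c,\bar y)=\delta(\bar x,\bar y)=N$ we conclude $\delta(\bar z,\bar c)=\tfrac{N}{2}$; symmetrically $\delta(\bar x,\bar c)=\delta(\bar y,\bar c)=\tfrac{N}{2}$ as well. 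Hence $\delta(\bar x,\bar c)=\delta(\bar c,\bar y)=\delta(\bar x,\bar y)/2$, which says $\bar c$ is a midpoint of the edge $\bar x\bar y$, and by symmetry of the argument the same holds for $\bar y\bar z$ and $\bar z\bar x$.

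The only point needing a little care — and the one I would flag as the main (very mild) obstacle — is justifying that $c_i$ genuinely lies on each pairwise geodesic in $\mathbb{T}_d$, so that the coordinate-wise additivity $\rho_T(x_i,c_i)+\rho_T(c_i,y_i)=\rho_T(x_i,y_i)$ holds and not merely the triangle inequality. This follows from the structure of the finite subtree $\triangle x_iy_iz_i$: it is either a path (one of the three vertices is itself the branch point, as in the left picture of Figure \ref{fig:TriG123}) or a genuine tripod (right picture), and in both cases the branch point $c_i$ separates any two of the three leaves, so it lies on the geodesic between them; equivalently one can derive additivity directly from the center characterization by expanding $\rho_T(x_i,y_i)\le\rho_T(x_i,c_i)+\rho_T(c_i,y_i)$ for all three pairs, adding, and comparing with the center identity to force all three triangle inequalities to be equalities. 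Everything else is a one-line summation over coordinates together with the fact that all edges of a triangle in the polygraph have $\delta$-length exactly $N$.
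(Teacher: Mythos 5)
Your argument is essentially the paper's own: sum the coordinate-wise center identity to get $\delta(\bar x,\bar c)+\delta(\bar y,\bar c)+\delta(\bar z,\bar c)=\tfrac{3N}{2}$, sum the coordinate-wise additivity $\rho_T(\cdot,c_i)+\rho_T(c_i,\cdot)=\rho_T(\cdot,\cdot)$ (valid because $c_i$ lies on each pairwise geodesic) to get $\delta$-additivity along one edge, and subtract. The extra care you take in justifying that $c_i$ lies on each geodesic is welcome but matches what the paper asserts in the same spot.
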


\begin{proof}
For each $1\le i\le m$, the vertex $c_i$ is the center of the triple $x_i, y_i, z_i$ in $\mathbb{T}_d$, whence $$\rho_T(x_i,c_i)+\rho_T(y_i,c_i)+\rho_T(z_i,c_i)=(\rho_T(x_i,y_i)+\rho_T(y_i,z_i)+\rho_T(z_i,x_i))/2.$$ Summing over $i$, we get 
\[
\delta(\bar c,\bar x)+ \delta(\bar c,\bar y)+\delta(\bar c,\bar z )=\frac{1}{2}( \delta(\bar x,\bar y)+ \delta(\bar x,\bar z)+\delta(\bar y,\bar z))=\frac{3}{2}N.
\]
Coordinatewise, $\rho_T(z_i,c_i)+\rho_T(c_i,y_i)=\rho_T(z_i.y_i)$, since $c_i$ is on the path between $z_i$ and $y_i$ in the tree. Hence $\delta(\bar z,\bar c)+\delta (\bar c,\bar y)=\delta(\bar y, \bar z)$ and thus $\delta(\bar c , \bar x)=\frac{1}{2}N$.
\end{proof}

\begin{rem}
Note that a related question arises in the proof of \Tref{GeoOver} concerning the geometric overlapping properties of $G_S$. That is, given an edge $\bar x\bar y \in E_S$ whose midpoint is  $\mathfrak{ m}$, how many triangles with center $\mathfrak{m}$ are incident to $\bar x \bar y $?
\end{rem}

\subsection{Local Connectivity of $G_S$}

We start with a necessary condition for $L=(G_S)_v\cong ((\mathbb{T}_d)_S)_{(\xi,...,\xi)}$ to be connected. Here we denote the connected components of $\mathbb{T}_d\setminus\{\xi\}$ by $B_1,...,B_d$. Figure \ref{Tree_With_Marks} can help the reader follow the calculations in this subsection.
\begin{lem}\label{necnec}
If $L$ is connected then either (i) $0\in S$, or (ii) there is a positive $s\in S$ such that $2s\in S$ as well, or (iii) there are three distinct $s,s',s''\in S$ satisfying $s''=s+s'$. 
\end{lem}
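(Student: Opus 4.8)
The plan is to prove the contrapositive: assuming $0\notin S$, that no positive $s\in S$ has $2s\in S$, and that no three distinct elements $s,s',s''\in S$ satisfy $s''=s+s'$, I will show that $L$ is disconnected. By Claim~\ref{regularity_GS} I may argue entirely inside $(\mathbb{T}_d)_S$ via the isomorphism $L\cong((\mathbb{T}_d)_S)_{(\xi,\ldots,\xi)}$; write $\rho$ for the metric of $\mathbb{T}_d$. Since $0\notin S$, every vertex $\bar y=(y_1,\ldots,y_m)$ of $L$ has $y_i\neq\xi$ in each coordinate, so each $y_i$ lies in a well-defined branch $B_1,\ldots,B_d$ of $\mathbb{T}_d\setminus\{\xi\}$; I record this as the \emph{branch vector} $\beta(\bar y)\in\{1,\ldots,d\}^m$. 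The heart of the proof will be that $\beta$ is constant along every edge of $L$, hence constant on connected components; note that hypothesis (i), $0\in S$, is exactly what would prevent $\beta$ from being defined in the first place.

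To establish edge-invariance I would fix an edge $\bar y\bar z$ of $L$ and a coordinate $i$, and set $a_i=\rho(\xi,y_i)$, $b_i=\rho(\xi,z_i)$, $c_i=\rho(y_i,z_i)$. Because $\bar y$ and $\bar z$ are neighbors of $(\xi,\ldots,\xi)$ and of each other in $(\mathbb{T}_d)_S$, each of $a_i,b_i,c_i$ is an entry of a rearrangement of $S$, hence a positive element of $S$. The tree fact I will invoke is the dichotomy that $y_i$ and $z_i$ lie in \emph{different} branches of $\mathbb{T}_d\setminus\{\xi\}$ if and only if $\xi$ lies on the $y_i$--$z_i$ geodesic, i.e.\ if and only if $c_i=a_i+b_i$ (otherwise $c_i<a_i+b_i$). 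So suppose some coordinate were inter-branch, $c_i=a_i+b_i$. If $a_i=b_i$, then $a_i$ is a positive element of $S$ with $2a_i=c_i\in S$ --- this is alternative (ii). If $a_i\neq b_i$, then $c_i=a_i+b_i$ strictly exceeds both $a_i$ and $b_i$, so $a_i,b_i,c_i$ are three distinct elements of $S$ with $c_i=a_i+b_i$ --- this is alternative (iii). Under the standing assumption both are excluded, so every coordinate of the edge is intra-branch, giving $\beta(\bar y)=\beta(\bar z)$.

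It then remains to exhibit two vertices of $L$ with distinct branch vectors. $L$ is nonempty (indeed $a_S\ge d\ge 3$ under these hypotheses), so pick any $\bar y\in V(L)$; say $y_1$ sits in branch $B_j$ at distance $s=a_1>0$ from $\xi$. Since $d\ge 3$ there is another branch $B_{j'}$, and $\mathbb{T}_d$ contains $(d-1)^{s-1}\ge 1$ vertices at distance $s$ from $\xi$ inside $B_{j'}$; pick such a vertex $y_1'$ and set $\bar y'=(y_1',y_2,\ldots,y_m)$. Then $\bar y'$ has the same distance profile from $(\xi,\ldots,\xi)$ as $\bar y$, so $\bar y'\in V(L)$, yet $\beta(\bar y')\neq\beta(\bar y)$ in the first coordinate. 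As $\beta$ is a component invariant, $\bar y$ and $\bar y'$ lie in different components, so $L$ is disconnected, which is the contrapositive of the lemma.

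I do not expect a genuine obstacle here: the only step that needs a little care is the tree-geometry dichotomy $c_i=a_i+b_i \iff \xi \text{ separates } y_i \text{ from } z_i$, together with the bookkeeping that all three distances are positive and lie in $S$. Once that is in place, the split on whether $a_i=b_i$ reproduces alternatives (ii) and (iii) verbatim, and alternative (i) is used only to make $\beta$ well-defined. (The lemma is only a necessary condition; a sharper characterization of local connectivity can be pursued separately, but this argument is all that is needed here.)
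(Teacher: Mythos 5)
Your proof is correct and takes essentially the same approach as the paper: it formalizes as a ``branch vector'' invariant the paper's terser observation that a connected $L$ with $0\notin S$ must contain an edge whose first coordinates lie in different branches of $\mathbb{T}_d\setminus\{\xi\}$. The contrapositive framing and the explicit exhibition of two vertices with distinct branch vectors spell out the connectivity step a bit more carefully, but the underlying tree-geometry dichotomy (separation by $\xi$ iff $c_i=a_i+b_i$) and the case split on $a_i=b_i$ versus $a_i\neq b_i$ are the same as in the paper.
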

\begin{proof}
Here $\rho$ is the metric of $\mathbb{T}_d$. If there is some $\bar{u}\in L$ with $u_1=\xi$, then necessarily $0 \in S$. Otherwise there must be an edge $\bar{u}\bar{w} \in E(L)$ with, say, $u_1\in B_1, w_1\in B_2$. But then $\rho(u_1,w_1)=\rho(u_1, \xi)+\rho(w_1, \xi)$. If $\rho(u_1, \xi)=\rho(w_1, \xi)$, we are in case (ii) and if they differ, case (iii) emerges.
\end{proof}

The following two claims give a necessary and sufficient condition for connectivity for $m=2,3$.

\begin{claim}\label{pi}
Let $S=[p,q]$, where $q\ge p$ and $q>0$. Then, $L$ is connected if and only if $p$ is even and $q=2p$.
\end{claim}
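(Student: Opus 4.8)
The plan is to analyze the link $L = ((\mathbb{T}_d)_S)_{(\xi,\xi)}$ directly as a graph on ordered pairs of vertices $(u_1, u_2)$ in $\mathbb{T}_d$, each at distance $p$ or $q$ (in the two possible arrangements) from $\xi$, and to understand when two such pairs are adjacent --- i.e.\ when $[\rho(u_1,w_1), \rho(u_2,w_2)] = [p,q]$ while simultaneously $[\rho(u_i,\xi)] = [\rho(w_i,\xi)] = [p,q]$. First I would dispose of the easy direction: if $p$ is even and $q = 2p$, I exhibit explicit paths in $L$ connecting an arbitrary vertex to a fixed base vertex. The key move is that from a pair of type $(p, q) = (p, 2p)$ one can ``rotate'' a geodesic of length $2p$ emanating from $\xi$ by first folding it back through its midpoint (which sits at distance $p$ from $\xi$) --- this is exactly the configuration allowed by $q = 2p$ and the parity condition, and it lets us change the direction of the length-$q$ leg one branch at a time while keeping the other coordinate controlled; combined with swapping which coordinate carries the long leg, this should generate all of $L$.

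For the converse, I would invoke \Lref{necnec}: if $L$ is connected then one of (i) $0 \in S$, (ii) some positive $s \in S$ has $2s \in S$, or (iii) three distinct elements with $s'' = s + s'$ holds. With $S = [p,q]$ and only two elements, case (iii) is impossible (we cannot have three distinct values), and case (i) forces $p = 0$, which I must rule out separately, while case (ii) forces $q = 2p$ with $p > 0$. So the structural content left to prove is: (a) $p = 0$ does not give a connected link, and (b) if $q = 2p$ with $p$ odd, the link is still disconnected. For (a), when $p=0$ a neighbor of $(\xi,\xi)$ in $(\mathbb{T}_d)_{[0,q]}$ is a pair $(\xi, w)$ or $(w,\xi)$ with $\rho(w,\xi)=q$; adjacency in the link between $(\xi, w)$ and $(\xi, w')$ would need $\rho(w,w') = q$ too, but one also needs a coordinate at distance $0$, pinning the first coordinate, and I expect the branch of $\mathbb{T}_d \setminus \{\xi\}$ containing $w$ to be an invariant of the component (you can never cross $\xi$ in the second coordinate because that costs distance $> q$ in a single step unless you pass through an intermediate structure that the girth/tree geometry forbids) --- this isolates the vertex-types by branch. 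For (b), the parity obstruction: when $p$ is odd, a triangle in $(\mathbb{T}_d)_{[p,2p]}$ would require, in each coordinate, three tree-distances among $\{p, 2p\}$ summing to an even number and obeying the triangle inequality (cf.\ \Clref{concond} / \Clref{b_S=0}), and I would track a $\mathbb{Z}/2$-valued invariant along edges of $L$ --- something like the parity of $\rho(u_1, \xi) + (\text{signed branch data})$ --- that is preserved by every edge and distinguishes components.

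The main obstacle I anticipate is the converse, specifically pinning down the right invariant that proves disconnectedness in the ``near miss'' cases ($p=0$, or $q=2p$ with $p$ odd, and also $q \neq 2p$ with $q$ even where \Lref{necnec}(ii) fails). The clean way to organize this is probably to define, for a vertex $(u_1,u_2) \in L$, the index of the branch $B_j$ of $\mathbb{T}_d \setminus \{\xi\}$ into which the \emph{longer} coordinate points (or a parity combining both when $p=q$ is impossible here since $q>p$), and to show by a short case analysis on the four adjacency patterns --- which coordinate moves by $p$, $q$, $p+q$, $q-p$, or $0$ --- that this index, or a suitable function of it, cannot change; the girth hypothesis is what guarantees all intermediate geodesics live inside balls where the tree picture is exact, so there are no unexpected shortcuts. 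Once the invariant is shown to be non-constant on $L$ (which needs $d \geq 3$, giving at least two branches to land in), disconnectedness follows and the proof is complete.
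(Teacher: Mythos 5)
Your high-level structure is essentially the paper's: dispose of the easy direction by exhibiting explicit paths, and for the converse invoke \Lref{necnec} to reduce to the cases $p=0$ or $q=2p$. But there are two concrete places where the proposal diverges from a complete argument, and in both the paper's route is simpler.

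For necessity, the missing observation is that the parity condition comes for free. If $L$ is connected and has at least two vertices (which it does, since $q>0$ and $d\ge3$), then $L$ has edges, so $b_S>0$. Running \Clref{b_S=0} with $m=2$ forces both $p$ and $q$ to be even: in every $3\times 2$ matrix whose rows are permutations of $(p,q)$, the two column sums have opposite parities unless $p\equiv q\equiv 0\pmod 2$. In particular, when $p$ is odd there are \emph{no} edges in $L$ at all, so there is nothing to track a $\mathbb{Z}/2$-invariant \emph{along}; the invariant machinery you propose would have nothing to act on, and you would discover, if you tried to define it, that the ambient graph is empty. The parity is better understood as an \emph{existence} constraint on edges (hence on $b_S$) than as a conserved quantity. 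Similarly, your worry about the ``near miss'' case ``$q\ne 2p$ with $q$ even'' is spurious: if $p\ne 0$ and $q\ne 2p$ then none of (i)--(iii) in \Lref{necnec} holds, so the lemma itself already gives disconnectedness with no further work. The only genuinely new thing to prove beyond \Lref{necnec} and \Clref{b_S=0} is ruling out $p=0$, and here the paper again argues via the triangle matrix: with $S=[0,q]$ the only valid $3\times 2$ table has a constant zero column (the mixed-row tables fail the triangle inequality in one column since $q>0$), so the $\xi$-coordinate can never move and vertices whose $\xi$ sits in the first versus second slot live in different components. Your branch-based argument for $p=0$ is a valid alternative, but it proves less than the table argument reveals (the graph is disconnected already at the level of which slot carries the $0$).

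For sufficiency, the ``fold the geodesic through its midpoint and rotate one branch at a time'' picture is on the right track, but as written it is not a proof: you need to verify that from an arbitrary vertex in $B_{\alpha_1}\times B_{\alpha_2}$ you can reach an arbitrary target in $B_{\beta_1}\times B_{\beta_2}$, including the case $\alpha_i=\beta_i$. The paper's trick is to route through a \emph{pivot} in $B_{\gamma_1}\times B_{\gamma_2}$ with $\gamma_i\ne\alpha_i,\beta_i$ (available since $d\ge3$), and to write down an explicit $4$-step path that reaches all of $B_{\gamma_1}\times B_{\gamma_2}$ from the start, then reverse from the target. You should be able to extract that explicit path from your ``rotate'' move, but without writing it down the connectivity claim is not established. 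So: same skeleton, but the proposal needs the $b_S>0\Rightarrow p,q$ even step to replace the unspecified $\mathbb{Z}/2$-invariant, and an explicit path (or pivot argument) to replace the informal ``should generate all of $L$.''
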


\begin{proof}
If $L$ is connected, then $b_S > 0$, and Claim \ref{b_S=0} implies that $p$ and $q$ are even. By \Lref{necnec}, either $q=2p$ or $p=0$. The latter case is ruled out by Claim \ref{b_S=0}, since the relevant matrix is necessarily: 
\[
\begin{bmatrix}
0 & q \\
0 & q \\
0 & q 
\end{bmatrix}
\]
whence the first coordinate cannot change and $L$ is not connected. This shows the necessity of the condition $q=2p$.

To prove sufficiency we need to exhibit a path between any two vertices in $L$, say $(u_1,u_2)\in B_{\alpha_1}\times B_{\alpha_2}$ and a vertex in $B_{\beta_1}\times B_{\beta_2}$. We select a {\emph{pivot}} vertex in $B_{\gamma_1}\times B_{\gamma_2}$ with $\gamma_1 \neq \alpha_1, \beta_1$ and $\gamma_2\neq \alpha_2, \beta_2$, which they can both reach. Such $\gamma_1, \gamma_2$ exist, since $d\ge 3$. So suppose ${\bar \rho}((u_1,u_2),(\xi,\xi))=(p,2p)$, and consider the following path:
\begin{equation} \label{path2}
    (p,2p) \xrightarrow{(2p,p)} (-p,2p) \xrightarrow{(2p,p)} (-2p,p) \xrightarrow{(p,2p)} (-2p,-p) \xrightarrow{(p,2p)} (-p,-2p).
\end{equation}

Here we use the following notation: A value $x>0$ in the $i$-th coordinate represents a vertex in $B_{\alpha_i}$ at distance $x$ from $\xi$, whereas $-x$ represents a vertex in some $B_{\delta}$ with $\delta\neq\alpha_i$ at distance $x$ from $\xi$. A pair $(r, s)$ above an arrow indicates that we move $r$ away from the vertex in the first coordinate, and $s$ away in the second coordinate. For illustration refer to Figure \ref{Tree_With_Marks}, with $p=2, d=3$, to follow such a path in $((\mathbb{T}_3)_{[2,4]})_{\bar \xi}$.
\[
(x_{11},x_{1111}) \xrightarrow{(4,2)} (x_{22},x_{1112}) \xrightarrow{(4,2)} (x_{2111},x_{11}) \xrightarrow{(2,4)} (x_{2112},x_{22}) \xrightarrow{(2,4)} (x_{21},x_{2111}).
\]
By the symmetry of the tree, (\ref{path2}) provides a path between $(u_1,u_2)\in B_{\alpha_1}\times B_{\alpha_2}$ and \textbf{every} vertex in  $B_{\gamma_1}\times B_{\gamma_2}$, in particular to the chosen pivot vertex. Consequently, $L$ is connected.
\end{proof}
\begin{figure}[]
    \centering
    \caption{The ball of radius $4$ around the vertex {\color{blue} $\xi$} in $\mathbb{T}_3$. In $\mathbb{T}_3 \setminus {\color{blue} \xi}$ the connected component of $x_i$ is $B_i$. Thus {\color{brown}\bf $B_1$}, {\color{purple} \bf $B_2$} and $B_3$ are comprised of the {\color{brown}\bf brown}, {\color{purple}\bf purple} and the black vertices, respectively. Note that $\left({\color{blue}\xi},  {\color{brown}x_{1111}},  {\color{brown}x_{1121}}\right)$ is an equilateral triangle with center ${\color{brown}x_{11}}$, namely all its edges are of length $4$ and all the paths betwen the vertices go through ${\color{brown}x_{11}}$. This illustrates how, for $p$ even, one can move from a vertex at distance $p$ from the origin to another one of the same distance using a length $p$ path.}
    \label{Tree_With_Marks}
    \label{fig:TriG123}
\begin{center}

\begin{tikzpicture}[scale=1.1]
\node[draw,shape=circle, color=blue] (xi) at (0:0) {$\xi$};
\node[draw,shape=circle, color=brown] (x1) at (0:1) {$x_1$};
\node[draw,shape=circle, color=purple] (x2) at (120:1)  {$x_2$};
\node[draw,shape=circle] (x3) at (240:1)  {$x_3$};
\node[draw,shape=circle, color=brown] (x11) at (20:2.5)  {$x_{11}$};
\node[draw,shape=circle, color=brown] (x12) at (340:2.5)  {$x_{12}$};
\node[draw,shape=circle, color=purple] (x21) at (100:2.5)  {$x_{21}$};
\node[draw,shape=circle, color=purple] (x22) at (140:2.5)  {$x_{22}$};
\node[draw,shape=circle] (x31) at (220:2.5)  {$x_{31}$};
\node[draw,shape=circle] (x32) at (260:2.5)  {$x_{32}$};

\node[draw,shape=circle, color=brown] (x111) at (10:4)  {$x_{111}$};
\node[draw,shape=circle, color=brown] (x121) at (320:4)  {$x_{121}$};
\node[draw,shape=circle, color=purple] (x211) at (90:4)  {$x_{211}$};
\node[draw,shape=circle, color=purple] (x221) at (145:4)  {$x_{221}$};
\node[draw,shape=circle] (x311) at (200:4)  {$x_{311}$};
\node[draw,shape=circle] (x321) at (265:4)  {$x_{321}$};

\node[draw,shape=circle, color=brown] (x112) at (40:4)  {$x_{112}$};
\node[draw,shape=circle, color=brown] (x122) at (350:4)  {$x_{122}$};
\node[draw,shape=circle, color=purple] (x212) at (120:4)  {$x_{212}$};
\node[draw,shape=circle, color=purple] (x222) at (170:4)  {$x_{222}$};
\node[draw,shape=circle] (x312) at (235:4)  {$x_{312}$};
\node[draw,shape=circle] (x322) at (290:4)  {$x_{322}$};

\node[draw,shape=circle, color=brown] (x1111) at (10:6)  {$x_{1111}$};
\node[draw,shape=circle, color=brown] (x1211) at (320:6)  {$x_{1211}$};
\node[draw,shape=circle, color=purple] (x2111) at (75:6)  {$x_{2111}$};
\node[draw,shape=circle, color=purple] (x2211) at (135:6)  {$x_{2211}$};
\node[draw,shape=circle] (x3111) at (193:6)  {$x_{3111}$};
\node[draw,shape=circle] (x3211) at (257:6)  {$x_{3211}$};

\node[draw,shape=circle, color=brown] (x1112) at (25:6)  {$x_{1112}$};
\node[draw,shape=circle, color=brown] (x1212) at (333:6)  {$x_{1212}$};
\node[draw,shape=circle, color=purple] (x2112) at (90:6)  {$x_{2112}$};
\node[draw,shape=circle, color=purple] (x2212) at (147:6)  {$x_{2212}$};
\node[draw,shape=circle] (x3112) at (208:6)  {$x_{3112}$};
\node[draw,shape=circle] (x3212) at (270:6)  {$x_{3212}$};

\node[draw,shape=circle, color=brown] (x1121) at (40:6)  {$x_{1121}$};
\node[draw,shape=circle, color=brown] (x1221) at (345:6)  {$x_{1221}$};
\node[draw,shape=circle, color=purple] (x2121) at (105:6)  {$x_{2121}$};
\node[draw,shape=circle, color=purple] (x2221) at (165:6)  {$x_{2221}$};
\node[draw,shape=circle] (x3121) at (230:6)  {$x_{3121}$};
\node[draw,shape=circle] (x3221) at (290:6)  {$x_{3221}$};

\node[draw,shape=circle, color=brown] (x1122) at (60:6)  {$x_{1122}$};
\node[draw,shape=circle, color=brown] (x1222) at (358:6)  {$x_{1222}$};
\node[draw,shape=circle, color=purple] (x2122) at (120:6)  {$x_{2122}$};
\node[draw,shape=circle, color=purple] (x2222) at (178:6)  {$x_{2222}$};
\node[draw,shape=circle] (x3122) at (242:6)  {$x_{3122}$};
\node[draw,shape=circle] (x3222) at (305:6)  {$x_{3222}$};

\draw (xi) -- (x1)
(xi) -- (x2)
(xi) -- (x3)
(x11) -- (x1)
(x12) -- (x1)
(x21) -- (x2)
(x22) -- (x2)
(x31) -- (x3)
(x32) -- (x3)
(x111) -- (x11)
(x112) -- (x11)
(x211) -- (x21)
(x212) -- (x21)
(x311) -- (x31)
(x312) -- (x31)

(x121) -- (x12)
(x122) -- (x12)
(x221) -- (x22)
(x222) -- (x22)
(x321) -- (x32)
(x322) -- (x32)

(x1111) -- (x111)
(x1112) -- (x111)
(x2111) -- (x211)
(x2112) -- (x211)
(x3111) -- (x311)
(x3112) -- (x311)

(x1121) -- (x112)
(x1122) -- (x112)
(x2121) -- (x212)
(x2122) -- (x212)
(x3121) -- (x312)
(x3122) -- (x312)

(x1211) -- (x121)
(x1212) -- (x121)
(x2211) -- (x221)
(x2212) -- (x221)
(x3211) -- (x321)
(x3212) -- (x321)

(x1221) -- (x122)
(x1222) -- (x122)
(x2221) -- (x222)
(x2222) -- (x222)
(x3221) -- (x322)
(x3222) -- (x322)

;
\end{tikzpicture}
\end{center}
\end{figure}

\begin{claim}\label{G_S_v-connected}
Let $S=[p,q,r]$, where $p\leq q\leq r$ and $0<r$. Then $L$ is connected if and only if $p+q+r$ is even and one of the following holds:
\begin{enumerate}
\item $r=p+q$ and either: $p$ is even, or $q$ is even and $2p\geq q$, or $p=q$;
\item $q=2p$ and $r\le p+q$, or $r=2p$;
\item $p,q$ and $r$ are even, $4p\geq 2q \geq r$ and either $r=2p$ or $r=2q$ or $q=2p$.
\end{enumerate}
\end{claim}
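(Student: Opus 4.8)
The plan is to work directly with the elementary \emph{moves} of $L\cong((\mathbb{T}_d)_S)_{\bar\xi}$, in the spirit of the proof of \Clref{pi}. A vertex of $L$ is a triple $(u_1,u_2,u_3)$ whose distance multiset $[\rho(\xi,u_1),\rho(\xi,u_2),\rho(\xi,u_3)]$ is $[p,q,r]$, recorded together with the component $B_j$ of $\mathbb{T}_d\setminus\{\xi\}$ containing each positive coordinate; an edge of $L$ moves the three coordinates by the three lengths $p,q,r$ in some order. The one geometric fact we use repeatedly is the ``relocation triangle'' visible in Figure~\ref{Tree_With_Marks}: a length-$\ell$ move applied to a coordinate at distance $x$ from $\xi$ reaches the distance-$x'$ vertices for exactly those $x'\equiv x+\ell\pmod 2$ with $|x-\ell|\le x'\le x+\ell$, and the case $x'=x$ (relocation within the same shell, into a different branch) is possible precisely when there is a tree-triangle with side lengths $x,x,\ell$, i.e.\ when $\ell$ is even and $\ell\le 2x$; the extreme case $\ell=2x$ is the ``long jump'' that runs up to $\xi$ and back down a fresh branch. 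Since $L$ is vertex-transitive (the group $\textrm{Stab}(\xi)\wr S_3$ from the proof of \Clref{regularity_GS} acts transitively on it), connectivity of $L$ is equivalent to: from one fixed vertex, every vertex of $L$ is reachable.

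\textbf{Necessity.} If $L$ is connected it has an edge, so $b_S>0$; \Clref{concond} then forces $p+q+r$ even together with either $p,q,r$ all even, or $r\le p+q$. \Lref{necnec} adds that one of $p=0$, $q=2p$, $r=2p$, $r=2q$, or $p<q<r=p+q$ must hold. Intersecting these with $p\le q\le r$ already gives the skeleton of cases (1)--(3); what remains is to justify the finer inequalities ($2p\ge q$ in (1), $r\le p+q$ in (2), $4p\ge 2q\ge r$ in (3)) and to rule out the residual configurations that survive the two cited results. For each such residual $S$ we exhibit a locally defined invariant separating $L$ into at least two components -- for example, when $r=p+q$ with $p,q$ odd and $p\neq q$, one checks that no move can change a suitable two-valued function of the parities of the three coordinate-distances (the available lengths never have the right parities in the right coordinates to do so). This is a bounded case check once the invariant for each residual configuration is written down.

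\textbf{Sufficiency.} For each admissible $S$ we fix a pivot vertex $\bar u^{\ast}$ with all positive coordinates in distinct branches and show every $\bar u\in L$ is reachable from it; equivalently, as in the proof of \Clref{pi}, we route an arbitrary $\bar u$ to a common pivot. The route is assembled from steps of the two kinds used in the path (\ref{path2}): in one designated coordinate we perform a long jump (length $2s$, changing its branch arbitrarily while preserving its distance $s$), while in the other two coordinates we either leave the distance unchanged by a relocation triangle or move a coordinate to $\xi$ and back out. The side conditions in (1)--(3) are exactly what make the required gadgets available: in case (1) one length equals $p+q$ and the distance-$p$ coordinates are relocated by a length-$2p$ long jump when $p$ is even, through a triangle with sides $p,p,q$ when $q$ is even and $2p\ge q$, or by exploiting the two equal entries when $p=q$; cases (2) and (3) are handled the same way, the inequalities $r\le p+q$, $2q\ge r$ (triangle $q,q,r$) and $2p\ge q$ (triangle $p,p,q$) being precisely the triangle inequalities the construction needs, and the all-even requirement in (3) supplying the even perimeters. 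One also verifies, in each sub-case, that the branch labels of the pivot can be rotated into place once some relocation gadget is available, which finishes connectivity.

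The main obstacle is the sufficiency direction. Since every edge of $L$ moves all three coordinates simultaneously by a rearrangement of $[p,q,r]$, no coordinate can be relocated in isolation: each use of a relocation gadget in one coordinate must be paired with legal moves of the other two, so the walks must be designed so that the ``long jump'' coordinate and the ``relocating'' coordinates always have compatible lengths available. Carrying this out across the whole tree of sub-cases of (1)--(3) -- and checking the pivot is reached in each -- is where the real work lies. A secondary difficulty, on the necessity side, is pinning down the exact separating invariant for every configuration that passes \Clref{concond} and \Lref{necnec} yet still fails (1)--(3).
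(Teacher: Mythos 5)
Your outline follows the same two-pronged strategy as the paper (necessity via \Clref{concond} and \Lref{necnec}, sufficiency via pivot routing as in \Clref{pi}), but it stops short of actually proving the statement in both directions, and you acknowledge this yourself at the end.

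On the necessity side, the gap is the absence of the ``odd permutation table'' argument that the paper's proof is built around. The paper observes that if $L$ is connected then some edge of $L$ must realize an \emph{odd} permutation of the distance profile; otherwise the sign of the permutation carrying the reference profile $(p,q,r)$ to the current profile is a conserved two-valued invariant, and $L$ splits. Realizing an odd permutation amounts to exhibiting a $3\times 3$ matrix as in \Clref{b_S=0} whose rows are $(p,q,r)$, $\sigma(p,q,r)$ for an odd $\sigma$, and the profile of the third edge, with each column having even sum and satisfying the triangle inequality. Enumerating the possible matrices for each transposition is what produces the sharp inequalities $2p\ge q$, $r\le p+q$, $4p\ge 2q\ge r$, etc. You gesture at ``a suitable two-valued function of the parities of the three coordinate-distances'' for one residual family, but you never define it, never verify it is conserved, and never carry out the corresponding analysis for the other residual families; calling this ``a bounded case check once the invariant is written down'' conflates having finitely many symbolic case shapes with having a finished verification over infinitely many admissible $(p,q,r)$.

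On the sufficiency side, your proposal correctly identifies the two gadgets (long jumps of length $2s$, and relocation triangles $x,x,\ell$) and correctly notes that the central difficulty is that every edge of $L$ moves all three coordinates simultaneously by a rearrangement of $[p,q,r]$, so gadgets in different coordinates must be synchronized. But you then say explicitly that ``carrying this out across the whole tree of sub-cases of (1)--(3) -- and checking the pivot is reached in each -- is where the real work lies,'' i.e.\ the routing paths (the analogues of the displayed chains (\ref{Eq1}) and (\ref{Eq2})) are not produced. A plan with the hard parts flagged as open is not a proof. If you want to keep this structure, you must, for each of the three cases, write down an explicit sequence of moves from a generic vertex to the all-negative pivot profile and verify that the intermediate profiles are legal (each column of the transition table is a triple with even sum satisfying the triangle inequality), and separately verify that all rearrangements of $(p,q,r)$ at the pivot can be reached -- exactly what the even/odd permutation subcases (\ref{Eq1}), (\ref{Eq2}) do for $r=p+q$ with $p$ even.
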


\begin{rem}
There is also a criterion for the connectivity of $L$ when all members of $S$ are even for larger $|S|$. We omit the (very technical) details.
\end{rem}

\begin{proof}
By \Lref{necnec}, if $L$ is connected, then one of the following must hold
\begin{equation}\label{condi}
(i)\ r=p+q;\  (ii)\  q=2p;\  (iii)\ r=2p;\  (iv)\ r=2q;\  (v)\ p=0. 
\end{equation}
We start with case $(v)$, i.e., $p=0$ and show that this is included in case $(i)$. Since $b_S>0$, by \Clref{concond} either $q$ and $r$ are both even, or $p+q\geq r$. The second case is just $q=r$ and we refer to the $r=p+q$ case. If $r>q$ and both are even, then the matrix from Claim \ref{b_S=0} must have a column of $0$'s. As in the proof of \Clref{pi}, this implies that $L$ is disconnected.

We turn to show that if $(i)$ holds, i.e., $r=p+q$, then we are in case $(1)$. If $p,q,r$ are all even, this is clear. Otherwise, one of $p,q,r$ is even, call this number $x$ and the other two, called $y,z$ are odd. Since $L$ is connected there must be an edge in $E(L)$ that permutes the distance profile of a vertex in $L$ by an odd permutation. Thus an odd permutation on $S$ should be realizable by a matrix as in \Clref{b_S=0}.
What can such a matrix look like? The element $x$ cannot be a fixed point of this permutation since it must appear exactly once in each column of the matrix. The only odd permutations we can thus realize are the transpositions $x\leftrightarrow y$ and $x\leftrightarrow z$. Consider the case $x\leftrightarrow z$ (the other case is identical). It can be realized only by the following matrices (up to rearrangement of columns):
\[
\begin{bmatrix}
x & y & z\\
y & x & z\\
z & y & x
\end{bmatrix}
\textrm{or}
\begin{bmatrix}
x & y & z\\
z & x & y\\
z & y & x
\end{bmatrix}
\]
Both matrices contain columns comprised of the triplets $x,z,z$ and $x,y,y$, whence $2z\geq x$ and $2y\geq x$. We next work out the correspondence between the multisets $[p, q, r]=[x, y, z]$. If $x=p$, then these two conditions are satisfied automatically. If $x=q$, then $q\leq 2p$. Finally if $x=r$, then $2p\leq p+q=r\leq 2p$ and $r=2p=2q$. In all cases $(1)$ is satisfied.

We next assume that $r\neq p+q$. If $p,q,r$ are not all even, the argument involving odd permutation tables applies and the triangle inequality yields that $r<p+q$. But $p$ is odd by (\ref{condi}), so that one of $q,r$ is even and one is odd. If $r$ is the even one, then the odd permutation table analysis yields $r\leq 2p$. But we are necessarily in case $(iii)$ or $(iv)$ so that $2p\leq r$. It follows that $r=2p$. On the other hand, if $q$ is the even one then $(ii)\ q=2p$ must be satisfied. We have arrived at case $(2)$.

In the only remaining case $p,q,r$ are all even and $r\neq p+q$. We scan cases $(ii), (iii), (iv)$.
\begin{itemize}
    \item If $r=2p$, then clearly $4p=2r\geq 2q\geq 2p= r$;
    \item We claim that $r=2q$ implies $2p\ge q$. Otherwise $p$ can appear only with itself in a column of a permutation table, and $L$ is disconnected;
    \item  Likewise $q=2p$ implies $2q\ge r$. Otherwise $r$ can appear only with at least one more $r$ in the same column, contrary to $L$ being connected.
\end{itemize} 
In all these scenarios we are in case $(3)$.\\

It only remains to verify that under these conditions $L$ is connected. Again we do so by exhibiting a path between any two given vertices via a properly chosen pivot vertex. Say that one vertex with profile $(p,q,r)$ is in $B_{\alpha_1}\times B_{\alpha_2}\times B_{\alpha_3}$ and one in $B_{\beta_1}\times B_{\beta_2}\times B_{\beta_3}$ and its profile is any given permutation $\pi$ of $(p,q,r)$. 

We consider only the case where $p$ is even and $r=p+q$. The other cases are handled similarly. We start by reaching an all-negative distance profile as follows.
\begin{equation} \label{Eq1}
\begin{split}
(p,q,p+q)&\xrightarrow{(p+q,p,q)} (-q,p+q,p)\xrightarrow{(p,q,p+q)}(-(p+q),p,-q)\xrightarrow{(q,p+q,p)}(-p,-q,-(p+q)).
\end{split}
\end{equation}

As before, a value $x>0$ in the $i$-th coordinate represents a vertex in $B_{\alpha_i}$ at distance $x$ from $\xi$, whereas $-x$ represents a vertex in some $B_{\delta}$ with $\delta\neq\alpha_i$ at distance $x$ from $\xi$. A triplet $(r, s,t)$ above an arrow indicates that we move $r$ away from the  vertex in the first coordinate, $s$ from the one in the second coordinate and $t$ in the third.
Even permutations $\pi$ can be realized through an additional step
\[
 (-p,-q,-(p+q))\xrightarrow{(p+q,p,q)}(-q,-(p+q),-p)\ ; \  (-p,-q,-(p+q))\xrightarrow{(q,p+q,p)}(-(p+q),-p,-q).
\]
Our solution for odd $\pi$ depends on the fact that $p$ is even, viz.,
\begin{equation} \label{Eq2}
    (-p,-q,-(p+q))\xrightarrow{(p+q,q,p)}(-q,-p,-(p+q))\ ; \ 
(-p,-q,-(p+q))\xrightarrow{(p+q,p,q)}(-(p+q),-q,-p).
\end{equation}
Finally, to reach $(-p,-(p+q),-r)$, we proceed as follows:
\[
(-q,-p,-(p+q))\xrightarrow{(p+q,q,p)}(-p,-(p+q),-q).
\]
Thus, $5$ steps suffice to move from any vertex in $B_{\alpha_1}\times B_{\alpha_2}\times B_{\alpha_3}$ to any vertex in $B_{\delta_1}\times B_{\delta_2}\times B_{\delta_3}$ where $\delta_i\neq\alpha_i$ for all $i$. Consequently $\text{diam}(L)\le 10$ when $p$ is even and $r=p+q$. Similar bounds apply for all relevant choices of $p,q,r$.
\end{proof}

\begin{examples}
{\rm
Again we use Figure \ref{Tree_With_Marks} to illustrate the arguments in the preceding discussion. Pick $d=3,p=q=2,r=p+q=4$, and let us find a path from $(x_{11},x_{11},x_{1111})$  to $(x_{31},x_{2212},x_{11})$. We do not necessarily find a shortest path, since we insist on using a pivot vertex. In order to satisfy the requirement that $\delta_i\neq\alpha_i, \beta_i$ the pivot must belong to $B_2\times B_3 \times B_j$ for some $j\neq 1$. Let us choose, e.g., $(x_{21},x_{31},x_{2111})$ as pivot. By the recipe from the previous proof, we can proceed as follows:
\[
\begin{split}
(x_{11},x_{11},x_{1111})&\xrightarrow{(4,2,2)} (x_{21},x_{1111},x_{11})\xrightarrow{(2,2,4)}(x_{2111},x_{11},x_{21})\xrightarrow{(2,4,2)}(x_{21},x_{31},x_{2111})\\
(x_{31},x_{2212},x_{11})&\xrightarrow{(4,2,2)} (x_{22},x_{22},x_{1111})\xrightarrow{(2,4,2)}(x_{2211},x_{31},x_{11})\xrightarrow{(2,2,4)}(x_{22},x_{3111},x_{22})\\
&\xrightarrow{(2,2,4)}{(x_{21},x_{31},x_{2111})}.
\end{split}
\]
Namely, we found a path of length $7$ between them. In the proof we  bound the distance from above by twice the longest path to the pivot.
}
\end{examples}

\begin{proof}[Proof of \Tref{Poly_are_exp}]
As \Pref{lambdaGS} shows, $G_S$ is a (global) expander. The analysis of its local spectral expansion is slightly more complex: 
Let $L$ be the link of $G_S$ (All the links are isomorphic as graphs). Denote its edge expansion by $\delta$ and its spectral expansion by $\epsilon$.
Cheeger's inequality
\[
\delta\leq \sqrt{b_S^2-\lambda (L)^2}
\]
yields 
\[
\lambda(L)\leq \sqrt{b_S^2-\delta^2}
\]
and therefore $\epsilon\geq 1-\sqrt{1-\left(\frac{\delta}{b_S}\right)^2}$. For fixed $S=[p,q,p+q]$ with $p$ and $q$ even, $b_S$ depends only on $d$. Next we derive two lower bounds on $\epsilon$, called $\beta_1(d), \beta_2(d)$. Let $\gamma(d)=\max(\beta_1(d), \beta_2(d))$. Clearly, $\epsilon\ge \gamma(d)$. We obtain a lower bound on the spectral expansion that does not depend on $d$, namely the minimum of $\gamma$. While the qualitative statement of the theorem already follows by considering only $\beta_2$, involving $\beta_1$ improves the actual bound.

Babai \cite{Babai} attributes to Aldous the observation that every vertex transitive graph with diameter $\Delta$ has vertex expansion at least $\frac{1}{2\Delta}$. As shown, our graph has diameter $\leq 10$, so that $\delta\geq \frac{1}{20}$, and hence $\epsilon\geq 1-\sqrt{1-\left(\frac{1}{20b_S}\right)^2}$. This bound $\beta_1=\beta_1(d)$ is a decreasing function of $d$.

To derive the second lower bound $\beta_2(d)$, we return to our calculation of the diameter. The argument involving the pivot vertex yielded the connectivity of the link. However, the same idea implies much more. Namely, every two (not necessarily distinct) vertices of the link $\bar{x}=(x_1,x_2,x_3)$ and $\bar{y}=(y_1,y_2,y_3)$ are connected by {\em many} paths of length $10$. Let 
\[
C_i=\bigcup_{B_j\cap\{x_i,y_i\}=\emptyset} B_j.
\]
These $C_i$'s are exactly all connected components of $\mathbb{T}_d\setminus\{\xi\}$ disjoint from the $i$-th coordinates of $\bar x, \bar y$. The set $\bar{C}=C_1\times C_2\times C_3$ contains all possible pivots from the previous argument. Clearly the number of paths from $\bar{x}$ to $\bar{C}$ is the same as the from $\bar{y}$ to $\bar{C}$. Moreover, the number of length $5$ paths from $\bar{x}$ to any vertex in $\bar{C}$ is the same, and if this number is $n$, then there are at least $m:=\frac{n^2}{|\bar{C}\cap L |} \geq\frac{n^2}{|L|}$ paths of length $10$ from $\bar{x}$ to $\bar{y}$. We now seek a lower bound on $n$. Each arrow in (\ref{Eq1}) and (\ref{Eq2}) represents at least $(d-2)^{p+q}$ possible edges.
In addition, when $p,q$ and $r$ are all even, one can use the move
\begin{equation}\label{Eq3}
     (p,q,r)\xrightarrow{(p,q,r)}(p,q,r),
\end{equation}
which also represents at least $(d-2)^{p+q}$ edges.
Therefore, we can arrive at an all-negative distance profile in \textbf{exactly} $5$ moves (If we arrive too early, we use steps (\ref{Eq3})). Thus the number of such paths is at least $(d-2)^{5p+5q}$. By symmetry, a fraction $(\frac{d-2}{d-1})^3$ of these paths fall in $\bar{C}$, so that $$n\geq (d-2)^{5p+5q}\left(\frac{d-2}{d-1}\right)^3.$$
Recall that the number of vertices in $L$ is $$a_S= 6d^3(d-1)^{2p+2q-3}\ \  \textrm{or}\ \ 3d^3(d-1)^{2p+2q-3},$$ and its regularity is $b_S\leq 36d^{p+q}$.
Therefore, the number of length $10$ paths from $\bar{x}$ to $\bar{y}$ is at least  
\[
m\geq\frac{(d-2)^{10p+10q+6}}{6d^{2p+2q+6}}.
\]
\\
Let $X$ and $Y$ be $N\times N$ non-negative symmetric matrices with zero trace whose Perron eigenvector is the all-ones vector $\overrightarrow{1}$, and let $Z=X+Y$. Then $Z$'s second eigenvalue equals $\lambda_2(Z)=\max uZu^T$, where the maximum is over all unit vectors $u$ that are orthogonal to $\overrightarrow{1}$. Therefore,
$\lambda_2(Z)\leq \lambda_2(X))+\lambda_2(Y)$. A similar argument yields $0>\lambda_N(Z)\geq \lambda_N(X)+\lambda_N(Y)$. If, as usual we denote $\lambda:=\max\{\lambda_2, |\lambda_N|\}$, we conclude that $\lambda(Z)\leq \lambda(X)+\lambda(Y)$. Apply this to $X=(A^{10}-mJ), Y=mJ$ where $A$ is $L$'s adjacency matrix, to conclude that 
\[
\lambda(L)^{10}\leq \lambda(A^{10}-mJ)+\lambda(mJ)\leq \deg(A^{10}-mJ)=\deg(L)^{10}-m\overbrace{|L|}^{=a_S}\ ,
\]
since $\lambda(mJ)=\lambda(J)=0$ and the second largest eigenvalue cannot exceed the degree. Therefore
\[
\left(\frac{\lambda(L)}{\deg(L)}\right)^{10}\leq 1-\frac{ma_S}{b_S^{10}}\leq 1-\frac{(d-2)^{12p+12q+6}}{6^{20}d^{12p+12q+6}}
\]
and thus
\[
\epsilon\geq1-\frac{\lambda(L)}{\deg(L)}\geq 1-\sqrt[10]{1-\frac{(d-2)^{12p+12q+6}}{6^{20}d^{12p+12q+6}}},
\]
which is an increasing function of $d$. Together with the previous bound this proves our claim.
\end{proof}

\begin{rem}
This method applies to other choices of $S$, but we do not elaborate on that. Numerical calculations suggest that the actual normalized spectral gap does not even depend on $d$. If true, a proof of this would require another approach.
\end{rem}

\section{Some Concrete Explicit Constructions}\label{sec:concrete}

\subsection{The case $S=[1,1,0]$}  Recall the concrete description of $G_S$ from the introduction: Take three copies of a $d$-regular non-bipartite graph $G$ of girth bigger than $3$ and have a token move on each of them. At every step two of the tokens move to a neighboring vertex and the third token stays put. The resulting graph $G_{S}$ is $(3d^2,2d)$-regular, it is connected (Claim \ref{G_S-connected}) and has connected links (Claim \ref{G_S_v-connected}). Let $v_1,\ldots,v_d$ be the neighbors of  $v_0\in V(G)$.
We turn to study the spectrum of $L=(G_S)_{(v_0,v_0,v_0)}$, the link of $(v_0,v_0,v_0)$ in $G_S$. The graph $L$ is tripartite with parts $V_1, V_2, V_3$, where
$V_1= \{(v_0,v_i,v_j)\ |\ 1\le i,j\le d\}$, and likewise for $V_2, V_3.$ Edges between $V_2$ and $V_3$ are defined via $(v_i,v_j,v_0)\sim (v_{i'},v_0,v_{j'})$ iff $i'=i$. The two other adjacency conditions are similarly defined.(E.g. see Figure 3 for the $d=3$ case).

\begin{lem} \label{specL110}
The eigenvalues of $L$ are $2d,d,0,-d$ with multiplicities $1, 3(d-1), 3(d-1)^2, 3d-1$ respectively.
\end{lem}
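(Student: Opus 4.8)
The plan is to diagonalize $L$ by exploiting its product-like structure. Recall $L$ is tripartite with parts $V_1,V_2,V_3$, each a copy of $\{(v_i,v_j):1\le i,j\le d\}$ (after dropping the coordinate that equals $v_0$), so $|V_s|=d^2$ and $|V(L)|=3d^2=a_S$. The adjacency rule between $V_2=\{(v_i,v_j,v_0)\}$ and $V_3=\{(v_{i'},v_0,v_{j'})\}$ is $i=i'$, i.e.\ it only ``sees'' one of the two free coordinates and ignores the other. First I would set up coordinates: write a function on $V_s$ as a $d\times d$ matrix, and observe that the cross-block adjacency operators are, up to the tripartite bookkeeping, of the form (identity on one factor) $\otimes$ (all-ones $J_d$ on the other factor) — the $J_d$ appearing because the coordinate that is ``forgotten'' by the adjacency relation gets summed over. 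Concretely, I expect $A_L$ to act as a $3\times 3$ block matrix whose off-diagonal blocks are $I_d\otimes J_d$, $J_d\otimes I_d$, or a permuted version thereof, matching the cyclic pattern of which coordinate is frozen.

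Next I would use the spectral decomposition $\mathbb{R}^d=\langle\mathbf{1}\rangle\oplus\mathbf{1}^\perp$ on each tensor factor, which is compatible with $J_d$ (eigenvalue $d$ on $\mathbf 1$, $0$ on $\mathbf 1^\perp$) and with $I_d$. This splits $\mathbb{R}^{d}\otimes\mathbb{R}^{d}$ into four joint eigenspaces of $(I\otimes J, J\otimes I)$: the $(\mathbf1,\mathbf1)$-piece of dimension $1$, the $(\mathbf1,\perp)$ and $(\perp,\mathbf1)$-pieces of dimension $d-1$ each, and the $(\perp,\perp)$-piece of dimension $(d-1)^2$. On each of these the three cross-block operators become scalars (each either $0$ or $d$), so $A_L$ restricted to the corresponding $3\times(\dim)$ subspace is $(\text{a }3\times3\text{ scalar matrix})\otimes I$. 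I would then just diagonalize the relevant $3\times3$ matrices:
\begin{itemize}
\item on the $(\mathbf1,\mathbf1)$-piece all three blocks act as $d$, giving the matrix $d(J_3-I_3)$ with eigenvalues $2d$ (once) and $-d$ (twice);
\item on each of the two ``mixed'' pieces, exactly one of the three off-diagonal blocks survives with value $d$ and the pattern is such that one gets a rank-considerations matrix with eigenvalues $d,0,-d$ — careful tracking of \emph{which} block survives in each mixed sector is needed;
\item on the $(\perp,\perp)$-piece all cross-blocks vanish, so $A_L=0$ there, contributing eigenvalue $0$ with multiplicity $3(d-1)^2$.
\end{itemize}

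Finally I would tally multiplicities: eigenvalue $2d$ once; eigenvalue $0$ with multiplicity $3(d-1)^2$ from the $(\perp,\perp)$-sector (plus possibly a contribution of $0$ from the mixed sectors, which must be counted); eigenvalue $d$ from the mixed sectors, totalling $3(d-1)$ after combining the two mixed pieces of dimension $d-1$ each with the right count; and eigenvalue $-d$ from the $(\mathbf1,\mathbf1)$-sector (multiplicity $2$) plus contributions from the mixed sectors, summing to $3d-1$. A consistency check — the multiplicities sum to $1+3(d-1)+3(d-1)^2+(3d-1)=3d^2$, and the trace is $0$ since $L$ is triangle-free in the relevant sense (it has no loops), i.e.\ $2d+3(d-1)d+0-(3d-1)d=0$ — pins down the bookkeeping. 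The main obstacle I anticipate is purely organizational: correctly identifying, in each of the two ``mixed'' eigen-sectors, which of the three cyclically-arranged cross-block operators acts as $d$ versus $0$, since the adjacency relation between $V_s$ and $V_t$ freezes a coordinate asymmetrically; getting this wrong would scramble the split of the $d$'s and $-d$'s among sectors even though the final list would still have to reconcile with the trace and dimension constraints.
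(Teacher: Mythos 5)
Your proposal takes a genuinely different route from the paper. The paper's proof is purely constructive: it simply exhibits, for each of the four eigenvalues $2d,d,0,-d$, an explicit family of eigenvectors ($\mathbf 1$; differences $\chi_{S_{\alpha,1}}-\chi_{S_{\alpha,\beta}}$; support-$4$ vectors of the form $\chi_{(v_1,v_1,v_0)}+\chi_{(v_i,v_j,v_0)}-\chi_{(v_1,v_j,v_0)}-\chi_{(v_i,v_1,v_0)}$; and differences of the $d$-sets $\{(v_i,v_0,v_j)\}_j$ and $\{(v_i,v_j,v_0)\}_j$), and then counts dimensions. Your plan — decompose each tensor factor as $\langle\mathbf1\rangle\oplus\mathbf1^\perp$ and block-diagonalize — is a more systematic approach and does lead to the same answer, so it is a legitimate alternative proof strategy.

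However, there is a concrete error in the middle of your plan, precisely at the place you flag as ``organizational.'' You propose to take, for each of the four tensor types $(\mathbf1,\mathbf1),(\mathbf1,\perp),(\perp,\mathbf1),(\perp,\perp)$, the direct sum of that same type over $V_1\oplus V_2\oplus V_3$, and claim $A_L$ restricted there is a $3\times3$ scalar matrix tensored with the identity. This fails for the two mixed types, because those subspaces are \emph{not} $A_L$-invariant. The reason: the ordered pair of free coordinates is $(2,3)$ for $V_1$, $(1,2)$ for $V_2$, $(1,3)$ for $V_3$, so ``the $(\perp,\mathbf1)$-piece of $V_1$'' is a zero-sum function of coordinate $2$, while ``the $(\perp,\mathbf1)$-piece of $V_2$'' is a zero-sum function of coordinate $1$. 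The cross-block from $V_1$ to $V_2$ sums over coordinate $3$ and returns a function of coordinate $2$, landing in the $(\mathbf1,\perp)$-piece of $V_2$, not the $(\perp,\mathbf1)$-piece. So your ``mixed sector'' leaks across types, and the $3\times3$-scalar-matrix picture does not hold there.

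The fix is to index the six mixed pieces not by $(\perp,\mathbf1)$ vs.\ $(\mathbf1,\perp)$ but by \emph{which of the three underlying coordinates} carries the $\perp$-direction. For each coordinate $k\in\{1,2,3\}$ exactly two of the parts $V_s$ have $k$ as a free coordinate, and the ``zero-sum function of coordinate $k$'' pieces of those two parts form a $2(d-1)$-dimensional invariant subspace on which $A_L$ acts as $d\begin{pmatrix}0&1\\1&0\end{pmatrix}\otimes I_{d-1}$, contributing $d$ and $-d$ each with multiplicity $d-1$. Summing over $k=1,2,3$ gives $3(d-1)$ copies of $d$ and $3(d-1)$ of $-d$; the $(\mathbf1,\mathbf1)$-sector gives, via $d(J_3-I_3)$, the eigenvalue $2d$ once and $-d$ twice; and the $(\perp,\perp)$-sector of dimension $3(d-1)^2$ is annihilated. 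This reproduces the stated multiplicities $1,\,3(d-1),\,3(d-1)^2,\,3d-1$, but note the correct block structure on the mixed part is three $2\times2$ blocks, not one or two $3\times3$ blocks as your sketch assumes.
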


\begin{proof}
Since eigenspaces corresponding to distinct eigenvalues are mutually orthogonal, it suffices to provide a total of $3d^2$ linearly independent eigenvectors with the appropriate eigenvalues.

The simple eigenvalue $2d$ corresponds to the all-$1$'s vector
Since $L$ is connected and $2d$-regular. 
	
The eigenspace of $d$: For $\alpha\in\{1,2,3\}$ and $\beta\in\{1,\ldots,d\}$, let $S_{\alpha, \beta}$ be the set of those vertices in $L$ whose $\alpha$-th coordinate is $v_{\beta}$. Note that the subgraph of $L$ spanned by $S_{\alpha, \beta}$ is a complete bipartite graph $K_{d,d}$. Also, $V_{\alpha}$ is an independent set, so for $\beta\neq\gamma$, there are no edges between $S_{\alpha, \beta}$ and $S_{\alpha, \gamma}$. It follows that for every $\beta\neq 1$, the vector $\chi_{A}-\chi_{B}$  is an eigenvector with eigenvalue $d$, where $A=S_{\alpha, 1}$ and $B=S_{\alpha, \beta}$. It is easily verified that these $3(d-1)$ vectors are  linearly independent.

Here is a list of $3(d-1)^2$ linearly independent vectors in the eigenspace of $0$. Each of these vectors has a support of $4$. For every $i,j\neq 1$ we take the vector with a $1$ coordinate at $(v_1,v_1,v_0), (v_i,v_j,v_0)$ and $-1$ at $(v_1,v_j,v_0),(v_i,v_1,v_0)$. The three cyclic permutations of these vectors are included as well.

Finally we give $3d$ vectors with eigenvalue $-d$. They sum to the zero vector and this is the only linear relation that they satisfy. By omitting one of them we have $3d-1$ linearly independent eigenvectors with eigenvalue $-d$. We describe $d$ of the vectors and get the factor of $3$ by rotations. The $i$-th vector in this list ($i=1,\ldots,d$) has the form $\chi_A-\chi_B$, where $A=\{(v_i,v_0,v_j)|j=1,\ldots,d\}$, and $B=\{(v_i,v_j,v_0)|j=1,\ldots,d\}$.
\end{proof}

\subsection{The case $S=[1,2,3]$} Here $G_{S}$ is $(6d^3(d-1)^3,2(d-1)^2(4d-7))$-regular. It is connected and so are its links. It also has interesting spectral properties, since by \Pref{lambdaGS}, if $G$ is Ramanujan, then $\lambda(G_S)\leq 12 d^{2}(d-1)^{7/2}$.
Actually, a similar conclusion can be drawn whenever $G$ has a substantial spectral gap.

\section{The perspective of high dimensional expansion}\label{sec:hdx}

The study of $(a,b)$-regular graphs can be cast in the language of simplicial complexes. Let us recall some basic facts from that theory. Let $X$ be a simplicial complex, and $\sigma$ a face in $X$. The {\em link} of $\sigma$ in $X$ is the following simplicial complex:
$$X_\sigma=\{\tau \in X\ |\ \sigma\cap\tau=\emptyset,\ \sigma\cup\tau\in X\}.$$
The $i$-th {\em skeleton} $X^{(i)}$ of $X$ is the simplicial complex that is comprised of all faces of $X$ of dimension $\le i$.\\
Associated with a graph $G=(V,E)$ is its {\em clique complex} $\ \mathcal{C}_G$, whose vertex set is $V$ and $S\subseteq V$ is a face of it if and only if $S$ spans a clique in $G$. Hence $G$ is $(a,b)$-regular if and only if the $1$-skeleton $\mathcal{C}_G^{(1)}$ is $a$-regular and the link of every vertex $v\in \mathcal{C}_G^{(2)}$ is a $b$-regular graph. If $G$ is an $(a,b)$-regular graph, then the number of $2$-faces in $\mathcal{C}_G^{(2)}$ is $\frac{abn}{6}$.

This section contains both negative and positive results. The negative results are mainly about the $[1,1,0]$-polygraphs and the positive ones are about $[1,2,3]$-polygraphs.

There is a considerable body of research, mostly quite recent on expansion in high dimensional simplicial complexes. Several different ways were proposed to quantify this notion. For the definitions of cosystolic and coboundary expansion, see e.g., \cite{EvraKaufman}.

\subsection{$[1,1,0]$-polygraphs have poor discrepancy}
For every base graph $G$, there are two sets in $G_{[1,1,0]}$, each containing $\frac{1}{8}$ of the vertices with no edges between them. Namely, let $A\subseteq V(G)$ of size $|A|=\frac{1}{2}|V(G)|$. Clearly, there are no edges between $A^3=\{(a_1,a_2,a_3) \colon a_1,a_2,a_3\in A\}$ and $(A^c)^3$. A similar construction can be given whenever $S$ contains a zero. In contrast, $[1,2,3]$-polygraphs exhibit better discrepancy properties, and in particular have the geometric overlap property, see below.
 
\subsection{Coboundary expansion}
This part is inspired by work in progress of Luria, Gundert and Rosenthal (e.g., Section $3$ of the lecture notes \cite{ConNotes}). They showed that Conlon's hypergraph \cite{Conlon} contains small non-trivial cocycles and thus is not a cosystolic expander and a fortiori not a coboundary expander either. Here we only provide a cocycle which is not a coboundary.

Again, let $G$ be a non-bipartite $d$-regular graph with girth larger then $3$, and $\Gamma=G_{[1,1,0]}$. We exhibit a set $A\subseteq E(\Gamma)$ such that:
\begin{enumerate}
\item Every triangle in $\Gamma $ has exactly two edges from $A$;
\item $A$ is not a cut in $\Gamma$.
\end{enumerate}
It follows that the characteristic function of $A$ is a non-trivial cocycle, implying that the  $2$-skeleton of $\Gamma$'s clique complex has a non-trivial first $\mathbb{F}_2$-cohomology and is thus not a coboundary expander.

The distance profile of every edge in $\Gamma$ is one of three: $(0,1,1)$, $(1,1,0)$ or $(1,0,1)$, and every triangle has exactly one edge of each kind. The set $A$ of those edges whose profile is $(1,1,0)$ or $(1,0,1)$ clearly satisfies condition $(1)$. To show condition $(2)$ we find an odd cycle in the graph $(V(\Gamma),A)$. Since $G$ is non-bipartite, it has an odd cycle, say $v_1,...,v_{\ell},v_1$. But then $$(v_1,v_1,v_1),(v_2,v_2,v_1),...,(v_{\ell},v_{\ell},v_1),(v_1,v_1,v_1)$$ is an odd cycle in $(V(\Gamma),A)$.\\
We note that this argument fails for $S=[1,2,3]$. On the other hand, this argument does work for $S=[1,1,2]$, showing that even a zero-free $S$ need not yield coboundary expansion.

\subsection{Geometric overlap property}
Let $X$ be a $2$-dimensional simplicial complex. Consider an embedding of $V(X)\to \mathbb{R}^2$ with the induced affine extension to $X$'s edges and faces. If for every such an embedding there is a point in $\mathbb{R}^2$ that meets at least an $\alpha$-fraction of the images of $X$'s $2$-faces, we say that $X$ has the $\alpha$-{\em geometric overlap} property. Work in this section and the following one is inspired by \cite{Conlon}. Here is our main theorem on this subject.

\begin{thm} \label{GeoOver}
Let $G$ be a $d$-regular graph with girth larger then $9$, $d>d_0$ and $\lambda(G) < \epsilon_0 d$. Then $G_{[1,2,3]}$ has the $\alpha_0$-geometric overlap property. Here $d_0, \epsilon_0, \alpha_0 >0$ are absolute constants.
\end{thm}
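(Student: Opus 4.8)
The plan is to follow the standard route for establishing geometric overlap, due to Boros--Füredi and Bárány and used in the hypergraph setting by Conlon and others: reduce the geometric statement to a purely combinatorial ``many rainbow triangles'' statement, then verify that combinatorial statement for $G_{[1,2,3]}$ using the spectral and structural information already assembled in the paper. Concretely, Bárány's argument shows that any $2$-complex in which \emph{every} balanced tripartition $V=V_1\sqcup V_2\sqcup V_3$ of the vertex set spans at least a $c$-fraction of all ``transversal'' triples as faces has the $\alpha_0$-geometric overlap property with $\alpha_0=\alpha_0(c)>0$. So it suffices to show: there is an absolute constant $c>0$ such that for every partition of $V(G_{[1,2,3]})=V(G)^3$ into three parts of equal size, the number of triangles of $G_{[1,2,3]}$ with one vertex in each part is at least $c$ times the total number of triangles.

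The heart of the argument is therefore a mixing (discrepancy) estimate for triangles, and this is where the structure of $G_{[1,2,3]}$ developed in Section~\ref{sec:polyg} enters. Using the description of triangles via their centers (the \emph{center} $\bar c$ of a triangle in $(\mathbb{T}_d)_S$, which is a midpoint of each of its three edges, as established just before this subsection), a triangle of $G_{[1,2,3]}$ can be encoded by a midpoint vertex $\mathfrak m\in V(G)^3$ together with three non-backtracking ``legs'' emanating from it whose coordinatewise length-profiles are the three cyclic rotations appearing in the matrix of Claim~\ref{concond} for $[1,2,3]$ — up to the two tree shapes in Figure~\ref{fig:TriG123}. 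Because $[1,2,3]$ contains no zero entry, the relevant legs use a genuinely non-backtracking step in every coordinate; the key point (flagged in the deleted commentary, and the reason $[1,1,0]$ and $[1,1,2]$ fail) is precisely that the edge-to-third-vertex incidence is governed by $A_G^{(t)}$ with all $t\ge 1$, so one can invoke Proposition~\ref{lambdaGS} and Lemma~\ref{Geroni} to bound the relevant second eigenvalues by $O(d^{?}(d-1)^{?})$, a factor $(1-\Omega(1))$ below the degree once $\lambda(G)<\epsilon_0 d$ and $d>d_0$. One then runs the usual expander-mixing argument \emph{twice}: first to show that $|E_{[1,2,3]}(V_i,V_j)|$ is within $O(\epsilon_0)$ of its expected value $\tfrac{a_S}{9}|V|$ for each pair $i,j$ (discrepancy of edges), and then, conditioning on an edge and counting completions to a triangle through the edge--midpoint--apex random walk, to show that the number of transversal triangles is $(1-O(\epsilon_0))$ times the ``expected'' $\tfrac{2}{9}\cdot(\text{total number of triangles})$. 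Choosing $\epsilon_0$ small enough makes this at least $c>0$, as required.

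The main obstacle I expect is the second mixing step: bounding the spectral gap of the relevant ``edge--triangle'' walk on $G_{[1,2,3]}$, rather than of $G_{[1,2,3]}$ itself. One must set up the right bipartite-type incidence operator between edges and apexes (equivalently, the operator $A_G^{(1)}A_G^{(2)}A_G^{(3)}$ or products thereof acting on appropriate tensor factors, matching the two triangle shapes of Figure~\ref{fig:TriG123}), check that its Perron eigenvector is the constant vector so that the expander-mixing lemma applies, and control its second singular value by the same Geronimus-polynomial estimates — this requires a careful bookkeeping of which coordinate carries which of the lengths $1,2,3$ along each leg and over both tree shapes, and verifying that the ``all-$d$ except one coordinate'' evaluations of $\chi_S$ (or of the associated products of $p^{(t)}$) really are the subdominant ones, via Lemma~\ref{Geroni}. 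The girth hypothesis $>9=3(1+2)+\text{slack}$ is exactly what guarantees these local tree computations are valid globally. A secondary, more bookkeeping-type obstacle is translating Bárány's ``fraction of transversal triples'' hypothesis into our normalization (total triangle count $\tfrac{a_Sb_S n}{6}$, via the clique-complex face count noted at the start of Section~\ref{sec:hdx}) and tracking the absolute constants $d_0,\epsilon_0,\alpha_0$ through the chain of inequalities; none of this is conceptually hard but it is where a careless constant could creep in.
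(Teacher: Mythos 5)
Your high-level plan is the right one, and you have correctly identified the crucial structural ingredient — that every directed edge of $G_{[1,2,3]}$ has a canonical midpoint, that the zero-free profile $[1,2,3]$ is exactly what makes this work, and that the spectral control comes from Proposition~\ref{lambdaGS} and Lemma~\ref{Geroni}. However, there are two concrete issues where your route diverges from a correct argument.

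First, the geometric-to-combinatorial reduction you state is not the right one. You invoke a version that asks for many rainbow triangles in every \emph{balanced tripartition} $V=V_1\sqcup V_2\sqcup V_3$ (parts of size $\approx n^3/3$). But Bukh's fan argument, which is what the paper uses, produces three lines through a point $p$ cutting the plane into $6$ sectors each with $\geq \lfloor |X|/6\rfloor$ points, and $p$ is contained only in triangles whose vertices lie in \emph{alternate} sectors. The combinatorial statement you actually need is: for every three pairwise disjoint sets $A,B,C$, each of size $\lfloor n^3/6\rfloor$ (so half the vertex set may lie outside $A\cup B\cup C$), the number of $A,B,C$-transversal triangles is a constant fraction of all triangles. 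The balanced-tripartition hypothesis is strictly weaker and does not imply this; in particular, grouping the six sectors into three parts of size $n^3/3$ will produce transversal triangles with vertices in contiguous sectors, which need not contain $p$.

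Second, for the ``second mixing step'' you propose to bound the spectral gap of an edge-to-apex incidence operator on $G_{[1,2,3]}$ — essentially an edge--triangle walk. This is considerably harder than necessary, and it is not what is needed here (indeed the paper treats edge--triangle walks separately and only for $S=[1,1,0]$). The clean route is to notice that the apexes completing a $G_{[1,2,3]}$-edge $uv$ through its midpoint $\mathfrak m$ are precisely (almost all of) the $G_{[1,1,1]}$-neighbors of $\mathfrak m$: each $\mathfrak m$ has $d^3$ such neighbors and $(d-1)^2(d-2)=d^3-O(d^2)$ of them form a $[1,2,3]$-triangle with $u,v$. So after one application of the expander mixing lemma in $G_{[1,2,3]}$ to lower-bound $|E_{[1,2,3]}(A,B)|$, one forms the \emph{multiset} $M$ of midpoints of directed $A\leftrightarrow B$ edges and applies Conlon's multiset expander mixing lemma (Lemma~\ref{expmixlem}) in $G_{[1,1,1]}$ to the pair $(M,C)$. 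This replaces the ``composite operator $A_G^{(1)}A_G^{(2)}A_G^{(3)}$ with careful coordinate bookkeeping over both tree shapes'' that you were bracing for with a single, already-available eigenvalue bound for $G_{[1,1,1]}$. You would need to fold in the $O(d^2)$-per-midpoint loss from the excluded neighbors and then chase constants, but there is no separate operator whose spectral gap has to be established.
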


\begin{proof} The first ingredient of our argument comes from Bukh's proof of the Boros-F\"uredi theorem \cite{Bukh}. A {\em fan} of three lines in the plane that pass through a point $x$ splits $\mathbb{R}^2$ into $6$ sectors. For every finite $X \subset \mathbb{R}^2$, there is such a fan where each sector contains at least $\lfloor\frac{|X|}{6}\rfloor$ points of $X$. But then $x$ resides in every triangle whose three vertices come from non-contiguous sectors of the fan. 
Thus it suffices to show that if $A,B, C\subset V(G_{[1,2,3]})$ are disjoint subsets of size $\lfloor\frac{|V(G_{[1,2,3]})|}{6}\rfloor=\lfloor\frac{n^3}{6}\rfloor$ each, then a constant fraction of the triangles in $G_{[1,2,3]}$ are in $T(A,B,C)$, i.e., they meet $A, B$ and $C$. 

Using the expander mixing lemma (=EML), we derive an estimate of $|E_{[1,2,3]}(A,B)|$ and show that the density of the $A, B$ edges is very close to the overall edge density of $G_{[1,2,3]}$. We assign a {\em midpoint} to every {\em directed} $A\to B$ edge $u\to v$ (recall subsection \ref{Triangles}). The crucial property of this midpoint is that $d^3-O(d^2)$ of its $d^3$ neighbors {\em in $G_{[1,1,1]}$} form together with $u, v$ a triangle {\em in $G_{[1,2,3]}$}. This gives us a good lower bound on the number of triangles in $G_{[1,2,3]}$ that have exactly one vertex in $A$ and one in $B$. Next we need to show that however we choose $C$, many of these triangles have a vertex also in $C$. To this end, we apply the EML to $M$ and $C$ in $G_{[1,1,1]}$, where $M$ is the multiset of all midpoints created as above. Here $C$ is an arbitrary set of $\lfloor\frac{n^3}{6}\rfloor$ vertices outside $A\cup B$.

\noindent
We turn to carry out this plan now. By \Pref{lambdaGS}, $\lambda(G_{[1,2,3]})\leq 6 \mu d^{2}(d-1)^3$,  where $\mu =\max(\lambda(G),2\sqrt{d-1})$, and the EML yields:
\[
\frac{n^3}{6}(d-1)^3d^2(d+6 \mu)\ge
|E_{[1,2,3]}(A,B)| \geq \frac{n^3}{6}(d-1)^3d^2(d-6 \mu)
\]

Let $u=(u_1,u_2,u_3)\in A$, $v=(v_1,v_2,v_3)\in B$ be neighbors in $G_{[1,2,3]}$ and suppose that their distance profile is $(1,2,3)$, in this order. The vertices $u_2, v_2$ have a unique common neighbor in $G$, called $w$. Also, let $z_1, z_2$ be the vertices on the shortest path from $u_3$ to $v_3$. Then $\frak{m}=(u_1,v_2,z_{1})$ is a midpoint of the directed edge $u\to v$. Let $x=(x_1,x_2,x_3)$ be a neighbor of $\frak{m}$ in $G_{[1,1,1]}$, i.e., $x_1  u_1, x_2 v_2, x_3 z_{1}\in E(G)$. It is easily verified that if in addition $x_1\neq v_1$, $x_2\neq w$, and $x_3\neq u_3,z_{2}$, then $uvx$ is a triangle in $G_{[1,2,3]}$. Clearly $(d-1)^2(d-2)=d^3-O(d^2)$ of the $d^3$ neighbors of $\frak{m}$ satisfy these additional conditions. Figure \ref{fig:TriG123} provides a local view of the three factors of $G_{[1,2,3]}$.
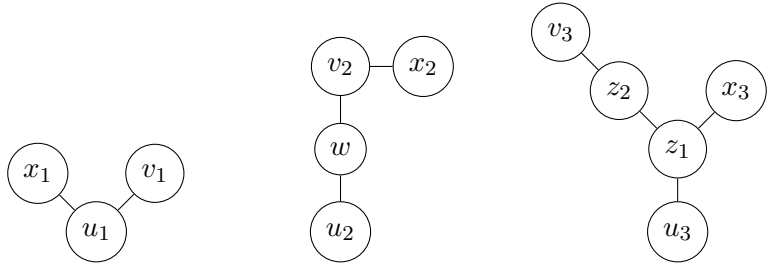
\begin{figure}[]
    \centering

    \caption{A triangle in $G_{[1,2,3]}$ as viewed in $G^{\square 3}$.}
    \label{fig:TriG123}
\begin{center}
\begin{tikzpicture}[scale=1.1]
\node[draw,shape=circle] (v0) at (0:0) {$u_1$};
\node[draw,shape=circle] (v1) at (45:1) {$v_1$};
\node[draw,shape=circle] (v2) at (135:1) {$x_1$};
\draw (v0) to (v2) ;
\draw (v0) to (v1) ;
\end{tikzpicture}
\qquad \qquad 
\begin{tikzpicture}[scale=1.1]
\node[draw,shape=circle] (w0) at (0:0) {$v_2$};
\node[draw,shape=circle] (w1) at (0:1) {$x_2$};
\node[draw,shape=circle] (w2) at (270:1) {$w$};
\node[draw,shape=circle] (w3) at (270:2) {$u_2$};
\draw (w0) -- (w1)
(w0) -- (w2)
(w2) -- (w3);
\end{tikzpicture}
\qquad  \ 
\begin{tikzpicture}[scale=1.1]
\node[draw,shape=circle] (w31) at (0:0) {$z_{1}$};
\node[draw,shape=circle] (u3) at (270:1) {$u_3$};
\node[draw,shape=circle] (x3) at (45:1) {$x_3$};
\node[draw,shape=circle] (w32) at (135:1) {$z_{2}$};
\node[draw,shape=circle] (v3) at (135:2) {$v_3$};
\draw (w31) -- (u3)
(w31) -- (x3)
(w31) -- (w32)
(w32) -- (v3);
\end{tikzpicture}
\end{center}
\end{figure}

Clearly the midpoint we chose for $v\to u$ differs from the one we choose for $u\to v$. Let $M=M(A,B)$ be the multiset of all such midpoints (for both $A\to B$ and $B\to A$ edges). Let $C$ be a set of $\lfloor\frac{n^3}{6}\rfloor$ vertices outside $A\cup B$. To count $M,C$ edges we need a version of the EML that applies as well to multisets of vertices.
\begin{lem}[\cite{Conlon}]\label{expmixlem}
Let $P, Q$ be two multisets of vertices in a $D$-regular $N$-vertex graph $H$. Then:
\[
\left|E(P,Q)-\frac{D}{N}|P||Q|\right|\le\lambda(H)\sqrt{\left(\sum_{x\in P}w_{x}^{2}-\frac{|P|^{2}}{N}\right)\left(\sum_{y\in Q}w_{y}^{2}-\frac{|Q|^{2}}{N}\right)}
\]
where $w_{x}, w_{y}$ is the multiplicity of $x\in P$ resp.\ of $y\in Q$.
\end{lem}
By \Pref{lambdaGS}, $\lambda(G_{[1,1,1]})\leq \mu d^{2}$, hence
\[
|E_{[1,1,1]}(M,C)|
\geq \frac{d^3}{n^3}|M||C|-\mu d^{2}\sqrt{|C|\sum_{y\in M}w_{y}^{2}}.
\]
As noted before, if $uv$ is an edge in $G_{[1,2,3]}$ and $\frak{m}$ is the midpoint we chose, then out of the $d^3$ neighbors that $\frak{m}$ has in $G_{[1,1,1]}$, at least $(d-1)^2(d-2)$ form a $G_{[1,2,3]}$-triangle with $u, v$. Therefore,
\[
|T(A,B,C)|\geq |E_{[1,1,1]}(M,C)|-(d^3-(d-1)^2(d-2))|M|
\]
A long but routine calculation shows that the theorem holds, e.g., with $d_0=1600$, $\epsilon_0=1/20$ and $\alpha_0=1/100$.
\end{proof}

\begin{rem}
\rm
So far we have provided no systematic explanation for the connection between the multiset $S=[1,2,3]$ and $[1,1,1]$. We turn to discuss this issue. Our argument utilizes two properties of $S$:
\begin{enumerate}
    \item All its elements are positive;
    \item Given a list of all the potential length profiles of triangles in $G_S$, one should check whether there is a triangles that has a center with a distance profile from one of the triangle's vertices which is zero-free. 
\end{enumerate} 
These are the only conditions we used about $G_S$. Thus, $[1,1,0]$ fails condition $(1)$, and $[1,1,2]$ does not satisfy $(2)$, but $[1,2,3]$ has such triangles, namely those with our specific choice of midpoints as centers. There are many other examples such as $[2,2,2]$. A nice aspect of the latter example is that for large enough $d$ it has the geometric overlap property even though its link is not even connected.
\end{rem}

\subsection{Mixing of the edge-triangle-edge random walk}
Random walks play a key role in the study of expander graphs, and similar questions are being studied in the high-dimensional realm as well, e.g., \cite{KM}, \cite{LLP}, \cite{DinurKaufman} and \cite{Conlon}. Consider the following random walk on the one-dimensional faces (i.e., edges) of a $2$-dimensional simplicial complex $X$. We move from an edge $e\in X^{(1)}$ to an edge that is chosen uniformly among all edges $f\in X^{(1)}$ with $f\cup e\in X^{(2)}$ (a triangle in $X$). The main issue here is to decide for simplicial complexes of interest whether this walk mixes rapidly. 

Differently stated, this is a walk on $\textrm{Aux}(X)$, a graph with vertex set $X^{(1)}$, where $ef$ is an edge iff $e\cup f\in X^{(2)}$. Given a multiset $S$, we consider $\textrm{Aux}(X)$ for $X=\mathcal{C}_{G_{S}}$, the clique complex of $G_S$. We establish a spectral gap for this graph (and hence rapid mixing of the walk) for $S=[1,1,0]$. A similar, but slightly harder argument applies as well to $S=[1,2,3]$: The duality in this case is between paths of length $6$ and paths of length $2$ and $4$, but the argument works the same.

\begin{lem} \label{edgeexpaux}
Let $G$ be a $d$-regular triangle-free graph with $n$ vertices, where $\lambda(G)=(1-\epsilon)d$, and let $\Gamma=\mathcal{C}_{G_{[1,1,0]}}^{(2)}$. Then $\lambda_2(\textrm{Aux}(\Gamma))\leq (1-\Omega(\epsilon^4))4d$. 
\end{lem}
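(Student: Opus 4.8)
The plan is to convert the desired spectral bound into an edge--expansion estimate for $\textrm{Aux}(\Gamma)$ and then invoke Cheeger's inequality; the square lost in Cheeger is exactly what turns a $\mathrm{poly}(\epsilon)$ edge--expansion bound into the $\Omega(\epsilon^{4})$ in the statement. The first task is to pin down the combinatorial structure of $\textrm{Aux}(\Gamma)$. Since every edge of $G_{[1,1,0]}$ has distance profile one of $(0,1,1),(1,0,1),(1,1,0)$ and every triangle of $G_{[1,1,0]}$ has exactly one edge of each type, $\textrm{Aux}(\Gamma)$ is tripartite with parts $\mathcal E_{1},\mathcal E_{2},\mathcal E_{3}$, where $\mathcal E_{i}$ is the set of edges whose profile is zero in coordinate $i$; such an edge is precisely a pair $(v;\theta)$ with $v\in V(G)$ (the constant coordinate) and $\theta$ an edge of the tensor square $G\otimes G$ (the moves in the two non-constant coordinates). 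Rerunning the $[1,1,0]$--triangle count used earlier shows every edge of $G_{[1,1,0]}$ lies in exactly $2d$ triangles --- $d$ through each of its two $G^{\square 3}$--midpoints --- so $\textrm{Aux}(\Gamma)$ is $4d$--regular with exactly $2d$ edges between each pair of parts, and one can list the $2d$ neighbours of $(v;\theta)\in\mathcal E_{i}$ inside $\mathcal E_{j}$: keep the factor of $\theta$ living in the coordinate shared by $\mathcal E_{i}$ and $\mathcal E_{j}$, replace the constant coordinate $i$ by a $G$--edge incident to $v$ ($d$ choices), and fix coordinate $j$ at one of the two endpoints of the other factor of $\theta$ ($2$ choices).

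Next, for any $U\subseteq V(\textrm{Aux})=E(G_{[1,1,0]})$ one has $|E_{\textrm{Aux}}(U,U^{c})|\geq\#\{\text{triangles of }G_{[1,1,0]}\text{ with edges in both }U\text{ and }U^{c}\}$, because a triangle having one or two of its three edges in $U$ already yields such an $\textrm{Aux}$--edge. So it suffices to show that every $U$ with $|U|\leq\frac12|E(G_{[1,1,0]})|$ cuts $\Omega(\mathrm{poly}(\epsilon)\,d)\cdot|U|$ triangles, and here the plan is a local/global dichotomy with a suitable threshold $\tau=\tau(\epsilon)$. The triangles and edges through a fixed $\bar x\in V(G_{[1,1,0]})$ are the edges and vertices of the link $L$ of \Lref{specL110}, whose spectrum $\{2d,d,0,-d\}$ gives $L$ a constant relative edge expansion ($h(L)\geq\tfrac{2d-d}{2}=\tfrac d2$). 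If a constant fraction of the vertices $\bar x$ are such that $U$ restricted to the edges at $\bar x$ is $\tau$--balanced, then the edge expansion of $L$ already produces $\Omega(d)$ cut triangles at each such $\bar x$. Otherwise $U$ is $\tau$--pure at all but a small fraction of the vertices; setting $W=\{\bar x:\text{most edges at }\bar x\text{ lie in }U\}$, the set $U$ then agrees, up to a $\tau$--fraction, with the edges inside $W$ together with about half of $E_{G_{[1,1,0]}}(W,W^{c})$, and $W$ is balanced because $|U|\leq\frac12|E|$.

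In this second branch one uses that $G_{[1,1,0]}$ is itself a spectral expander: its eigenvalues are the numbers $\lambda_{1}\lambda_{2}+\lambda_{1}\lambda_{3}+\lambda_{2}\lambda_{3}$ over triples of eigenvalues of $G$ (\Sref{sec:polyg}), and a short case check gives $\lambda(G_{[1,1,0]})\leq(3-2\epsilon)d^{2}$, i.e.\ a normalized spectral gap of order $\epsilon$, hence (Cheeger) $h(G_{[1,1,0]})\geq\Omega(\epsilon)\,d^{2}$; since essentially every triangle through a $W$--$W^{c}$ edge is cut (its third vertex produces one ``mostly-$U$'' and one ``mostly-$U^{c}$'' edge), this already gives $\Omega(\epsilon d)\cdot|U|$ cut triangles, while genuinely small $U$ are treated separately using the gallery--connectivity of $\mathcal C_{G_{[1,1,0]}}^{(2)}$ (connected $1$-skeleton with connected links, \Clref{G_S-connected},\,\Clref{G_S_v-connected}) and the constant local expansion. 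Combining the two branches and tracking constants through the calibration of $\tau(\epsilon)$ yields edge expansion $\Omega(\epsilon^{2}d)$ for $\textrm{Aux}(\Gamma)$, and Cheeger's inequality $\lambda_{2}\leq 4d-\frac{h(\textrm{Aux}(\Gamma))^{2}}{8d}$ then gives $\lambda_{2}(\textrm{Aux}(\Gamma))\leq 4d\bigl(1-\Omega(\epsilon^{4})\bigr)$, as claimed.

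The main obstacle is the bookkeeping in the dichotomy: choosing $\tau=\tau(\epsilon)$ so that both regimes close simultaneously (this is where powers of $\epsilon$ are lost), and --- using the explicit description of the $2d$ triangles through an edge from the first step --- upgrading the ``$\tau$--pure'' case into an honest vertex partition $W\sqcup W^{c}$ of $G_{[1,1,0]}$ whose crossing edges genuinely account for the cut triangles, rather than being absorbed into error terms. (The text's remark that ``a similar, but slightly harder, argument applies to $S=[1,2,3]$'', with length-$6$ paths dual to length-$2$ and length-$4$ paths, indicates that the same local/global split is intended there, with the link and $1$-skeleton of $G_{[1,2,3]}$ replacing those of $G_{[1,1,0]}$.)
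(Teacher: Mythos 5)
Your plan is the same at the top level as the paper's: establish an edge‑expansion bound of order $\epsilon^2 d$ for $\textrm{Aux}(\Gamma)$ via a local/global dichotomy (link spectrum from \Lref{specL110} for the local half, the spectral gap of $G_{[1,1,0]}$ for the global half), then apply Cheeger, whose square produces the $\epsilon^4$. The structural facts you record — every edge is in $2d$ triangles, split $d$–$d$ by its two midpoints; $\textrm{Aux}(\Gamma)$ is tripartite and $4d$‑regular; $\lambda(G_{[1,1,0]})\le(3-2\epsilon)d^2$ via the symmetric polynomial $\lambda_1\lambda_2+\lambda_1\lambda_3+\lambda_2\lambda_3$ — are all correct and also appear (explicitly or implicitly) in the paper.

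Where you diverge, and where a genuine gap remains, is the local bookkeeping. You index edges and triangles by the vertices they \emph{pass through}, so each triangle is triple‑counted and each edge is attributed to its two endpoints. The paper instead attributes each triangle to its unique \emph{center} (a vertex not on the triangle), giving a true partition $\mathcal T=\bigsqcup_x\mathcal T_x$, and attributes each edge to its two \emph{midpoints}; crucially the edge $\bar x\bar y$ and its dual $\bar x'\bar y'$ (the two midpoints) swap midpoints/endpoints. This duality is exactly what closes the branch you flag as ``the main obstacle'': for every $\bar x\bar y\in E_{G_{[1,1,0]}}(X,X^c)$ the dual edge lies in both $\mathcal T_{\bar x}$ and $\mathcal T_{\bar y}$ and therefore either contributes to $F_{\bar y}$ (if in $F$) or to $\mathcal T_{\bar x}\setminus F_{\bar x}$ (if not), yielding the clean inequality $\sum_{y\notin X}|F_y|+\sum_{x\in X}|\mathcal T_x\setminus F_x|\ge|E(X,X^c)|$ and hence $\sum_{y\notin X}|F_y|\ge(\epsilon/12)|F|$ after setting $\delta=\epsilon/12$. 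Your version, which asks to ``upgrade the $\tau$‑pure case into an honest vertex partition $W\sqcup W^c$ whose crossing edges genuinely account for the cut triangles,'' is not supplied: knowing that most edges at $\bar x\in W$ lie in $U$ bounds the ``bad'' edges at $\bar x$ by $\tau\cdot3d^2$, which is much larger than the $2d$ triangles through any given crossing edge, so you cannot conclude per‑edge that most of its triangles are cut, and the cross‑edge averaging argument that would fix this is not carried out. The midpoint duality — which you have already computed implicitly when describing the $2d$ triangles through an edge — is the missing mechanism; invoking it would essentially collapse your argument into the paper's.
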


\begin{proof}
Let $F\subseteq E_{[1,1,0]}$ be a set of at most a half of $G_{[1,1,0]}$'s edges and consider the triangles of $G_{[1,1,0]}$ that have edges from $F$. Proving a lower bound on $\textrm{Aux}(\Gamma)$'s edge expansion entails showing that for every such $F$, a constant fraction of these triangles are not contained in $F$.

Let $\Gamma$ denote the set of triangles in $\mathcal{T}$. We freely refer to them either as triangles or as $2$-faces of a complex. Each triangle is associated to its center, so that $\mathcal{T}=\bigsqcup_{x\in \Gamma^{(0)}}\mathcal{T}_x$, where $\mathcal{T}_x$ is the set of those triangles in $\Gamma$ whose center is the vertex $x$ (see Section \ref{Triangles}). We think of $\mathcal{T}_x$ as a simplicial complex with $d^3$ triangles, $3d^2$ edges and $3d$ vertices.

Every edge $xy$ in $G_{[1,1,0]}$ has two midpoints, call them $x'$ and $y'$. It is easy to verify that $x'$ and $y'$ are neighbors in $G_{[1,1,0]}$ and the midpoints of the edge $x'y'$ are $x$ and $y$. This yields a natural {\em duality} $xy\longleftrightarrow x'y'$ among the edges of $G_{[1,1,0]}$. Let $F_x$ be the set of those edges in $F$ that belong to a triangle in $\mathcal{T}_x$. Note that $2|F|=\sum_{x\in \Gamma^{(0)}}|F_x|$, since every edge of $G_{[1,1,0]}$ has exactly two midpoints, each of which is a center of $3d^2$ triangles.

Let $X$ be the set of those vertices $x$ of $\Gamma$ such that the vast majority of edges in $\mathcal{T}_x$ belong to $F$, i.e., $X:=\{x\in \Gamma^{(0)} \ |\ |F_x|\geq (1-\delta)3d^2\}$. We intend to show that with a proper choice of $\delta >0$ there holds 
\begin{equation} \label{ClClCl}
    \sum _{x\not\in X}|F_x| \geq \frac{\epsilon}{12}|F|.
\end{equation}

We can assume that $\sum_{x\in X} |F_x|\geq |F|$, for otherwise $\sum _{x\not\in X}|F_x|\ge|F|$ and (\ref{ClClCl}) clearly holds. Consequently, $|X|\geq \frac{|F|}{3d^2}$, since $|F_x|$ is smaller then the number of edges in $\mathcal{T}_x$ which is $3d^2$ for every $x$. We take $\delta<\frac{1}{4}$, so that: 
\[
\frac{3}{4}|X|\cdot 3d^2\leq|X|≥(1-\delta)3d^2\leq \sum_{x\in X}|F_x|\leq 2|F|\leq  \frac{3d^2n^3}{2}.
\]
Hence $|X|\leq \frac{2n^3}{3}$ and $|X^c|\geq \frac{1}{2}|X|$.
It is well known that a $k$-regular graph whose second eigenvalue is $\mu$ has edge expansion $\ge\frac{k-\mu}{2}$. Since $\lambda (G_{[1,1,0]})= d^2+2d\lambda(G)=3d^2-2\epsilon d^2$, we conclude that 
\[
|E(X,X^c)|\ge\epsilon d^2\min(|X|,|X^c|)\geq \frac{\epsilon d^2}{2}|X| .
\]
To derive an upper bound on $|E(X,X^c)|$, let $xy\in E(X,X^c)$, with $x\in X, y\not\in X$ and let $e$ be the edge that is dual to $xy$.
Clearly $e$ is in both $\mathcal{T}_x$ and $\mathcal{T}_y$, so either  $e\in F$ and $e\in F_y$ or $e\not\in F$ and $e\in\mathcal{T}_x\setminus F_x$. Therefore
\[
\sum_{y\in X^c}|F_y|+\sum_{x\in X}|\mathcal{T}_x\setminus F_x|\ge |E(X,X^c)|.
\]
But if $x\in X$, then $|\mathcal{T}_x\setminus F_x|\leq 3\delta d^2 $. We sum the above inequalities and conclude that
\[
\sum_{y\in X^c}|F_y|\geq\frac{\epsilon d^2}{2}|X|-3\delta d^2|X|\ge (\frac{\epsilon}{6}-\delta)|F|.
\]
By choosing $\delta=\frac{\epsilon}{12}$ we obtain (\ref{ClClCl}).

We proceed to prove the main statement. The fact that $\textrm{Aux}(\mathcal{T}_x)$ is isomorphic to $((\mathbb{T}_d)_S)_{(\xi,\xi,\xi)}$ together with the spectral information in \Lref{specL110}, imply that $\textrm{Aux}(\mathcal{T}_x)$ has edge expansion $\ge\frac{d}{2}$. Hence if $x\in X^c$, then 
\[
|E_{\textrm{Aux}(\Gamma)}(F_x,\mathcal{T}_x\setminus F_x)|\geq \frac{d}{2}\min(|F_x|,|\mathcal{T}_x\setminus F_x|)\geq \frac{\epsilon d}{24}|F_x|.
\]
Therefore,  
\[
\begin{split}
|E_{\textrm{Aux}(\Gamma)}(F,F^c)|&=\sum_{x\in \Gamma^{(0)}}|E_{\textrm{Aux}(\Gamma)}(F_x,\mathcal{T}_x\setminus F_x)|
\geq \sum_{x\in X^c}|E_{\textrm{Aux}(\Gamma)}(F_x,\mathcal{T}_x\setminus F_x)\\
&\ge \sum_{x\in X^c}\frac{\epsilon d}{24}|F_x|
\geq \frac{\epsilon^2}{288}d|F|,
\end{split}
\]
where the last step uses Inequality \ref{ClClCl}. In other words, $\textrm{Aux}(\Gamma)$ has edge expansion $\ge \frac{\epsilon^2}{288}d$. 
But the second eigenvalue of a $k$-regular graph with edge-expansion $h$ is at most $\sqrt{k^2-h^2}$ (see  Appedndix B in \cite{RS07}). Since $\textrm{Aux}(\Gamma)$ is $4d$-regular, this yields
\[
\lambda_2(\textrm{Aux}(\Gamma))\leq (1-\frac{\epsilon^4}{3\cdot 10^6})4d.
\]

\end{proof}

\noindent{In order to control the low end of $\textrm{Aux}(\Gamma)$'s spectrum we recall the following}:

\begin{lem} [\cite{DR}] \label{DRDR}
Let $G=(V,E)$ be an $N$-vertex $D$-regular graph with eigenvalues $\lambda_1\ge\ldots\ge\lambda_N$. For $U\subseteq V$ let $b(U)$ denote the least number of edges that must be removed to make subgraph induced by $U$ bipartite. Then
\[
\lambda_{N}\ge-D+\frac{\Psi^{2}}{4D},
\]
where $$\Psi=\min_{U\ne\emptyset}\frac{b(U)+|E(U,U^{c})|}{|U|}.$$
\end{lem}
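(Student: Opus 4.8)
The statement is a \emph{dual Cheeger inequality} controlling the bottom of the spectrum, and I would prove it following the threshold‑rounding argument behind \cite{DR}: start from an eigenvector $g$ realizing $\lambda_N$ and cut it, by thresholding, into a single vertex set that simultaneously has few monochromatic edges and few boundary edges. Fix a real eigenvector $g$ with $A_G g=\lambda_N g$, where $A_G$ is the adjacency matrix of $G$. The key reformulation is that the quantity to be bounded is the quadratic form of $DI+A_G$:
\[
D+\lambda_N=\frac{\langle g,(DI+A_G)g\rangle}{\langle g,g\rangle}=\frac{\sum_{uv\in E}(g(u)+g(v))^2}{\sum_{v}g(v)^2}.
\]
So it suffices to exhibit a nonempty $U\subseteq V$ with $b(U)+|E(U,U^c)|\le\sqrt{2D(D+\lambda_N)}\,|U|$; this forces $\Psi\le\sqrt{2D(D+\lambda_N)}$, hence $\lambda_N\ge -D+\Psi^2/(2D)\ge -D+\Psi^2/(4D)$.

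To produce such a $U$, I would normalize $g$ so that $\max_v|g(v)|=1$, draw $t\in[0,1]$ uniformly at random, and set $U_t:=\{v:g(v)^2>t\}$, two‑coloured by the sign of $g$. Deleting all same‑sign edges inside $U_t$ makes its induced subgraph bipartite, so $b(U_t)$ is at most the number of such edges. Since $\Pr[v\in U_t]=g(v)^2$, taking expectations edgewise gives
\[
\mathbb{E}|U_t|=\sum_v g(v)^2,\qquad \mathbb{E}\bigl[b(U_t)+|E(U_t,U_t^c)|\bigr]\le\sum_{uv\in E}\Bigl(\mathbf 1[g(u)g(v)>0]\min(g(u)^2,g(v)^2)+|g(u)^2-g(v)^2|\Bigr).
\]
Now one invokes the elementary inequality $\mathbf 1[ab>0]\min(a^2,b^2)+|a^2-b^2|\le |a+b|\,(|a|+|b|)$, valid for all reals $a,b$ (equality when $a,b$ have opposite signs; bounded by $(|a|+|b|)^2$ when they agree). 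Summing over edges and applying Cauchy--Schwarz together with $\sum_{uv}(g(u)+g(v))^2=(D+\lambda_N)\|g\|^2$ and $\sum_{uv}(|g(u)|+|g(v)|)^2\le 2\sum_{uv}(g(u)^2+g(v)^2)=2D\|g\|^2$ yields $\mathbb{E}\bigl[b(U_t)+|E(U_t,U_t^c)|\bigr]\le\sqrt{2D(D+\lambda_N)}\,\mathbb{E}|U_t|$. On the other hand $b(U)+|E(U,U^c)|\ge\Psi|U|$ for every nonempty $U$, and both sides vanish when $U_t=\emptyset$, so integrating over $t$ gives $\Psi\,\mathbb{E}|U_t|\le\sqrt{2D(D+\lambda_N)}\,\mathbb{E}|U_t|$; since $\mathbb{E}|U_t|=\|g\|^2>0$ this gives $\Psi\le\sqrt{2D(D+\lambda_N)}$, as required.

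I expect the only genuinely delicate point to be discovering the right ``test'' inequality: once one commits to bounding every bad edge uniformly by $|g(u)+g(v)|(|g(u)|+|g(v)|)$, the rest is the routine threshold‑and‑average scheme, and the extraction of a good $U$ from the averaging needs nothing beyond the observation that the ratio bound $b(U)+|E(U,U^c)|\ge\Psi|U|$ holds trivially on the empty $U_t$'s. (Incidentally this route even produces the sharper $\lambda_N\ge -D+\Psi^2/(2D)$; the factor $4$ in the statement presumably reflects a coarser estimate in \cite{DR}, and the weaker form is all that is used in the sequel.)
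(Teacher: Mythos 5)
The paper does not prove this lemma: it is stated as a direct citation to Desai and Rao~\cite{DR}, so there is no in-text proof to compare your argument against. Your reconstruction is correct. The reduction to the quadratic form $\sum_{uv\in E}(g(u)+g(v))^2 = (D+\lambda_N)\|g\|^2$ is right, the threshold sets $U_t=\{v: g(v)^2>t\}$ with the sign two-colouring give $\Pr[v\in U_t]=g(v)^2$ and the edgewise bound
\[
\mathbb{E}\bigl[b(U_t)+|E(U_t,U_t^c)|\bigr]\le\sum_{uv\in E}\Bigl(\mathbf 1[g(u)g(v)>0]\min(g(u)^2,g(v)^2)+|g(u)^2-g(v)^2|\Bigr),
\]
and the pointwise test inequality $\mathbf 1[ab>0]\min(a^2,b^2)+|a^2-b^2|\le |a+b|(|a|+|b|)$ checks out in all three sign cases (with equality when $ab\le 0$). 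Cauchy--Schwarz combined with $\sum_{uv}(|g(u)|+|g(v)|)^2\le 2D\|g\|^2$ then delivers $\Psi\le\sqrt{2D(D+\lambda_N)}$, and the final averaging step is sound because $b(U)+|E(U,U^c)|\ge\Psi|U|$ holds for every $U$ (trivially for $U=\emptyset$) while $\mathbb{E}|U_t|=\|g\|^2>0$. This is the standard level-set rounding scheme behind dual Cheeger inequalities of Desai--Rao/Trevisan type; you actually obtain the sharper constant $\Psi^2/(2D)$, and the $\Psi^2/(4D)$ stated in the paper (which is what the subsequent application uses) follows a fortiori. One small polish worth adding: observe explicitly that any $v$ with $g(v)=0$ is almost surely outside $U_t$, so the two-colouring by the sign of $g$ is well defined on all of $U_t$.
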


We can now establish a gap at the bottom of $\textrm{Aux}(\Gamma)$'s spectrum. We consider $U$ either as a set of vertices in $\textrm{Aux}(\Gamma)$, or a set of edges in $G_{[1,1,0]}$.
We separate the proof into two cases: 
\begin{itemize}
    \item When $U$ is very large, and therefore contains many triangles;
    \item When $U$ is not very large in which case we can apply \Clref{edgeexpaux}.
\end{itemize}

We need the following
\begin{claim}
A set $W$ of $w$ edges in $K_{d,d,d}$ contains at least $d(w-2d^2)^+$ triangles. The bound is tight.
\end{claim}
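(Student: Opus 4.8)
The plan is to use a one-line inclusion–exclusion (union bound) over the edges of $K_{d,d,d}$ that are \emph{missing} from $W$. Write $A,B,C$ for the three sides, each of size $d$. Two elementary counting facts drive everything: (i) $K_{d,d,d}$ has exactly $d^3$ triangles, one for each triple $(a,b,c)\in A\times B\times C$, since a triangle must use one vertex from each side; and (ii) every edge of $K_{d,d,d}$ lies in exactly $d$ triangles --- e.g.\ an edge $ab$ with $a\in A$, $b\in B$ lies in the $d$ triangles $\{a,b,c\}$, $c\in C$, and symmetrically for the other two edge types. Note also that $K_{d,d,d}$ has $3d^2$ edges in total.

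First I would bound the number of triangles that are \emph{not} contained in $W$. Each such triangle contains at least one edge of $K_{d,d,d}\setminus W$, and there are $3d^2-w$ such edges, each lying in exactly $d$ triangles by (ii); hence, by the union bound, at most $d(3d^2-w)$ triangles fail to lie in $W$. Subtracting from the total $d^3$ given by (i), the set $W$ contains at least $d^3-d(3d^2-w)=dw-2d^3=d(w-2d^2)$ triangles. Since the number of triangles is trivially nonnegative, it is at least $d(w-2d^2)^+$, which is the claimed bound.

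For tightness I would exhibit, for every $w$, a set $W$ meeting the bound exactly. If $w\le 2d^2$, take $W$ to consist of $w$ edges all incident to the side $A$ (i.e.\ lying inside the bipartite graphs $A$--$B$ and $A$--$C$); since $W$ contains no $B$--$C$ edge it spans no triangle at all, matching $d(w-2d^2)^+=0$. If $2d^2\le w\le 3d^2$, take $W$ to be all $2d^2$ edges incident to $C$ together with an arbitrary set of $w-2d^2$ edges between $A$ and $B$; then a triangle $\{a,b,c\}$ is contained in $W$ if and only if $ab\in W$, so there are exactly $d(w-2d^2)$ of them. (For $w>3d^2$ the statement is vacuous, as $K_{d,d,d}$ has only $3d^2$ edges.)

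There is essentially no hard step: the content is the two counting facts (i), (ii) and the bookkeeping in the tightness examples, so I do not expect any real obstacle. If, in addition, one wanted to control \emph{which} triangles survive --- the kind of localized information used later, e.g.\ in the analysis of $\mathrm{Aux}(\Gamma)$ --- one could instead argue edge by edge via the inequality $|N_W(a)\cap N_W(b)|\ge \deg_W(a)+\deg_W(b)-d$ inside $C$ for each retained $A$--$B$ edge $ab$; but for the bare inequality of the claim and its tightness the union-bound argument above is cleanest.
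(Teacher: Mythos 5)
Your proof is correct and is essentially the same as the paper's: the lower bound via a union bound over the $3d^2-w$ missing edges (each in exactly $d$ triangles) is just a static rephrasing of the paper's dynamic argument that sequentially re-adding missing edges creates at most $d$ new triangles per edge, and your tightness constructions (for $w\le 2d^2$, avoid one $K_{d,d}$; for $w>2d^2$, include two full $K_{d,d}$'s plus $w-2d^2$ edges of the third) are the paper's.
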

\begin{proof}
Tightness is easy. If $w\le 2d^2$, we can have $W$ completely avoid one of the three $K_{d,d}$'s, and therefore be triangle free. When $w> 2d^2$, have $W$ contain two of the $K_{d,d}$'s. Every edge in the third $K_{d,d}$ is in exactly $d$ triangles so $W$ has exactly $d(w-2d^2)$ triangles. The proof of the bound is very similar: Start with any set $W$ of $w> 2d^2$ edges and sequentially add to $W$ every remaining edge in $K_{d,d,d}$. The addition of a new edge creates at most $d$ new triangles, and eventually we reach the whole of $K_{d,d,d}$ with its $d^3$ triangles. If follows that we must have started with at least $d(w-2d^2)$ triangles, as claimed.
\end{proof}
We maintain the same notations: $U_x$ is the set of edges in $U$ that belong to a triangle in $\mathcal{T}_x$, the set of triangles with center $x$. A triangle is associated to its center, and we partition the triangles contained in $U$ according to their various centers. We also recall that the $1$-skeleton of $\mathcal{T}_x$ is a complete tripartite graph $K_{d,d,d}$.

Call vertex $x$ {\em heavy} if $|U_x|\ge\frac{5}{6}|E(K_{d,d,d})|= \frac{5d^2}{2}$, and note that by the above claim, in this case  $U_x$ must contain at least  $\frac{d^3}{2}$ triangles, which is also a lower bound on the number of triangles in $\textrm{Aux}(U_x)$. But all triangles in $\textrm{Aux}(U_x)$ are edge disjoint, so we must remove at least $\frac{d^3}{2}$ edges from $\textrm{Aux}(U_x)$ to make it bipartite.

Also recall that every edge in $E_{[1,1,0]}$ belongs to exactly two triangles. Consequently, if $|U|\geq \frac{71}{72}|E_{[1,1,0]}|$, then at lease $\frac 56$ of the vertices are heavy. Therefore, in this case we must remove at least $\frac{5n^3}{6}\cdot\frac{d^3}{2}=\frac{5d^3n^3}{12}$ edges to make the induced graph on $U$ bipartite. Therefore $$\frac{b(U)}{|U|}\geq \frac{5d^3n^3}{12}\cdot \frac{2}{3d^2n^3}=\frac{5d}{18}.$$

On the other hand, if $|U|\leq \frac{71}{72}|E_{[1,1,0]}|$, then $$|E_{\textrm{Aux}(\Gamma)}(U,U^c)|\geq \frac{d\epsilon^2}{192}\min(|U|,|U^c|)\geq  \frac{d\epsilon^2}{71\cdot192}|U| $$
and therefore $\frac{|E_\textrm{Aux}(U,U^c)|}{|U|}\geq \frac{d\epsilon^2}{2\cdot10^4}$.
We conclude that $\Psi \geq \frac{d\epsilon^2}{2\cdot10^4}$, and  by \Lref{DRDR}, $\lambda_N\geq -4d+\frac{\epsilon^4d}{32\cdot 10^{8}}$.
Since we established an additive gap of size $O(d\epsilon^4)$ both from above and from below for $\textrm{Aux}(\Gamma)$, it follows that the edge-triangle-edge random walk mixes rapidly.

\section{For which $(a,b)$ do large $(a,b)$-regular graphs exist?}\label{sec:geom_const}
This section provides a partial answer to the question in the title of this section. 
If $b=2$ and the links are connected, then every link is a cycle. So, the graph in question is the $1$-skeleton of a triangulated $2$-manifold. A good example with which to start is $a=6$. The Cayley graph of $\mathbb{Z}^2$ with generators $(\pm1,0),(0,\pm1),\pm(1,1)$ is the planar triangular grid. The quotient of this graph $\bmod ~m\mathbb{Z}\times n\mathbb{Z}$, is a $(6,2)$-regular finite triangulation of the torus whose links are connected. We ask for which values of $a$ there exist infinitely many such graphs.

This discussion is closely related to the study of equivelar polyhedral $2$-manifolds and non-singular $\{p,q\}$-patterns on surfaces, a subject on which there exists a considerable body of literature. We only mention \cite{MSW1} and \cite{MSW2} where infinitely many such graphs for $a\geq 7$ are constructed. Some of these constructions are inductive and start from the above triangulations of the torus. Other constructions are iterative and use snub polyhedra of prisms. 

We recall that the \emph{tensor product} $G\otimes H$ of two graphs $G$ and $H$, is a graph with vertex set $V(G)\times V(H)$ where $(u,v)$ and $(u',v')$ are neighbors when both $uu'\in E(G)$ and $vv'\in E(H)$. Therefore its adjacency matrix is the Kronecker tensor product $A_G\otimes A_H$. Note that $G\otimes G$ is isomorphic to $G_{[1,1]}$. Also, if $G$ is $(a,b)$-regular and $H$ is $(a',b')$-regular, then $G\otimes H$ is $(aa',bb')$-regular.

Thus, if $G$ is $(k,2)$-regular, then $G\otimes K_m$ is $(k(m-1),2(m-2))$-regular. This yields arbitrarily large $(k(m-1),2(m-2))$-regular graphs with connected links. This means that the question in the title is answered positively for many $a>b$ and all asymptotic relations between $a$ and $b$.  
\section{Open Questions and Remarks}
Countless questions suggest themselves in this new domain of research. We mention below a few which we view as the most attractive.
\begin{enumerate}
\item {\bf A randomized model:} One of the earliest discoveries in the study of expander graphs is that in essentially every reasonable model of random graphs, and in particular for random $d$-regular graphs, almost all graphs are expanders. It would be very interesting to find a randomized model of $(a,b)$-regular graphs and in particular one where most members are expanders both locally and globally.
\item {\bf Higher-dimensional constructions:} We have touched upon the connections of our subject with the study of expansion in higher-dimensional simplicial complexes. Clearly, $(a,b)$-regularity is a two-dimensional condition, and we know essentially nothing for higher dimensions. Concretely: do $(a,b,c)$-regular graphs exist? Namely for fixed $a>b>c>1$, we ask whether there exist arbitrarily large $a$-regular graphs where the link of every vertex is $b$-regular, and the link of every edge is $c$-regular. We want, moreover, that the whole graph, every vertex link and every edge link be expanders. We stress that no such constructions  based on Ramanujan complexes \cite{LSV} are presently known. There are indications that the situation in dimension two is less rigid than in higher dimensions. Does this translate to some non-existence theorems?

\item {\bf Garland's method} \cite{Garland} is a powerful tool in the study of high-dimensional expansion, e.g., \cite{Openheim}, \cite{Papikian}). In order to apply the method for an $(a,b)$-regular graph $G$, it needs to have the property that the spectrum of every vertex link is contained in $\{-b,b\}\cup [-\beta, \beta]$ for some $\beta < \frac{b}{2}$. In such case, Garland's method asserts that $G$ is also a global expander.
However, some substantial new ideas will be needed to construct such an $(a,b)$-regular graph using only combinatorial arguments. For instance, polygraphs cannot have this property. Indeed, compare what happens when we start from a $d$-regular graph $G$ that is a very good expander vs.\ a very bad one. While $G_S$ inherits $G$'s expansion quality, the links of the two graphs are identical.

\item {\bf Trade off:} The lower bound on $\lambda_2$ from \Tref{tradeoff} is an increasing function of $\delta$. However, we do not know how tight this bound is and whether the {\em best possible} lower bound on $\lambda_2$ increases with $\delta$. \Tref{ABTB} is tight, so the best bounds for $\delta=0$ and $\delta>0$ differ. But whether the same holds as $\delta>0$ increases, we do not know.
\end{enumerate}

\section*{Appendix: Regularity of the links}

Recall that $\Omega$ is its set of all the arrangements of the multiset $S=[l_1,...,l_m]$.
For a positive integer $i$, define $f_i \colon \mathbb{Z}_{\geq 0}^2 \to \mathbb{Z}_{\geq 0}$ as follows: 
\[
f_i (j,k)=
\begin{cases}
0,\ &\textrm{for}\   i+j+k\equiv 1\pmod 2 \\
0,\ &\textrm{for}\ \frac{i+j+k}{2}<\max \{i,j,k\}\\
(d-1)^{\frac{j+k-i}{2}},\ &\textrm{for}\ \frac{i+j+k}{2}=\max \{i,j,k\}\\
(d-2)(d-1)^{\frac{j+k-i}{2}-1},\ &\textrm{otherwise},
\end{cases}
\]
and for $i=0$,
\[
f_0(j,k)=
\begin{cases}
0, &j\ne k\\
\lfloor d(d-1)^{j-1} \rfloor, &j=k.

\end{cases}
\]
Thus, by \Clref{b_S=0}, we conclude that
\[
b_{S}=\sum_{\omega,\omega'\in{\Omega}}\prod_{j=1}^m f_{l_j}(\omega_j,\omega'_j)).
\]

\section*{Acknowledgements:} We thank Irit Dinur, whose questions prompted us to work on these problems. Our conversations with Irit took place during a special year on high dimensional combinatorics held in the Israel Institute of Advanced Studies. We thank the IIAS and the organizers of this beautiful year.

\begin{bibdiv}
\begin{biblist}
\bib{ABLS}{article}{
  author    = {Alon, N.},
  author    = {Benjamini, I.},
  author    = {Lubetzky, E.},
  author    = {Sodin, S.},
  title     = {Non-backtracking random walks mix faster},
  journal   = {Communications in Contemporary Mathematics},
  volume    = {9},
  number    = {4},
  pages = {585--603}
  year      = {2007}
}

\bib{Babai}{article}{
 author= {Babai, L.},
 title= {Local expansion of vertex-transitive graphs and random generation in finite groups}, 
 journal= {Proc. 23rd STOC, ACM Press}, 
 pages= {164--174},
 year={1991}
}

\bib{Bukh}{article}{
  author    = {Bukh, B.},
  title     = {A Point in Many Triangles},
  journal   = {Electr. J. Comb.},
  volume    = {13},
  number    = {1},
  year      = {2006}
}

\bib{ConNotes}{article}{
author= {Chapman, Michael},
title={Conlon's construction of hypergraph expanders},
note={Available at \url{https://cs.huji.ac.il/~michaelchapman/Lecture_11-6-18_HDE.pdf}}
}

 \bib{Conlon}{article}{
 author = {Conlon, D.},
 title = {Hypergraph expanders from Cayley graphs},
 year = {2017},
 note = {Available at \url{https://arxiv.org/abs/1709.10006}}
 }

\bib{DinurKaufman}{article}{
author={Dinur, I.},
author={Kaufman, T.},
title={High dimensional expanders imply agreement expanders},
journal={IEEE 58th Annual Symposium on Foundations of Computer Science},
year={2017}
}
 \bib{DSV}{book}{
  author =	{Davidoff, G.},
  author =	{Sarnak, P.},
  author =	{Valette, A.},
  title =	{Elementary Number Theory, Group Theory and Ramanujan Graphs},
  pages =	{20--24},
  year =	{2003},
  publisher =	{Cambridge university press},
 }
 
 \bib{DR}{article}{
  author = {Desai, M.},
  author = {Rao, V.},
 title = {A characterization of the smallest eigenvalue of a graph},
 journal = {J. Graph Theory},
 year = {1994},
 volume = {18--2},
 pages = {181--194}
 }
 
 \bib{EvraKaufman}{article}{
author={Evra, S.},
author={Kaufman, T.},
title={Bounded degree cosystolic expanders of every dimension},
journal={In Proceedings of
the 48th Annual ACM SIGACT Symposium on Theory
of Computing},
year={2016}
}

 \bib{Garland}{article}{
author={Garland, H.},
title={ $p$-adic curvature and the cohomology of discrete subgroups of $p$-adic groups},
journal={Annals of Mathematics},
year={1973},
volume={97},
number={3},
pages={375--423}
}

\bib{KostaOri}{article}{
author={Golubev, K.},
author={Parzanchevski, O.},
title={ Spectrum and combinatorics of Ramanujan triangle complexes},
journal={Israel Journal of Mathematics},
year= {2019},
 number = {2},
  pages = {583-–612},
   volume = {230}
}

 \bib{HLV}{article}{
  author = {Hoory, Shlomo},
  author= {Linial, Nati} ,
  author= {Wigderson, Avi},
  doi = {10.1090/s0273-0979-06-01126-8},
  journal = {Bull. Amer. Math. Soc.},
  number = {04},
  pages = {439-–562},
  title = {Expander Graphs and Their Applications},
  volume = {43},
  year = {2006}
}

\bib{KM}{article}{
  author    = {Kaufman, T.},
  author = {Mass, D.},
  title     = {High Dimensional Combinatorial Random Walks and Colorful Expansion},
  journal   = {CoRR},
  volume    = {abs/1604.02947},
  year      = {2016},
  url       = {http://arxiv.org/abs/1604.02947},
 
}

\bib{Kaufman_Opp1}{article}{
author={Kaufman, T.},
author={Oppenheim, I.},
title={ Simplicial complexes arising from elementary matrix groups and high dimensional expanders},
year={2017},
note={Available at \url{https://arxiv.org/abs/1710.05304} },
}

\bib{lmn}{article}{
author={Linial, N.}
author={Magen, A.}
author={Naor, A.}
title={Girth and Euclidean distortion},
journal={Geometric {\&} Functional Analysis GAFA},
year={2002},
volume={12},
number={2},
pages={380--394},
issn={1420-8970}
}

\bib{LLP}{article}{
author={Lubetzky, E.},
author={Lubotzky, A.},
author={Parzanchevski, O.},
title={Random walks on Ramanujan complexes and digraphs},
year={2017},
 note = {Available at \url{https://arxiv.org/abs/1702.05452}}
}

\bib{LSV}{article}{
author={Lubotzky, A.},
author={Samuels, B.}
author={Vishne, U.}
title={Ramanujan complexes of tyoe $\tilde A_{d*}$},
journal={Israel journal of Mathematics},
year={2005},
volume={149},
pages={267--299}
}

\bib{MarSpiSri}{article}{
author={ Marcus, A. W.},
author={Spielman, D. A.},
author={Srivastava, N.},
title={Interlacing families I: bipartite Ramanujan graphs of all degrees},
journal = {Ann. of Math.},
year ={2015},
volume= {182-1},
pages={307--325}
}

\bib{MSW1}{article}{
author={McMullen, P.},
author={Schulz, C.},
author={Wills, J.M.},
title={Equivelar polyhedral manifolds in $E^3$},
journal = {Israel J. Math.},
year={1982},
volume ={41},
pages={pp 331-–346}
}

\bib{MSW2}{article}{
author={McMullen, P.},
author={Schulz, C.},
author={Wills, J.M.},
title={Polyhedral $2$-manifolds in $E^3$ with unusually large genus},
journal = {Israel J. Math.},
year={1983},
volume ={46},
pages={pp 127--144}
}

\bib{Openheim}{article}{
author={Oppenheim, I.} 
title={Vanishing of cohomology with coefficients in representations on Banach spaces of groups acting on buildings}
journal={Comment. Math. Helv.}
volume ={92}
pages ={pp 389--428}
year={2017}
}

\bib{Papikian}{article}{
author={Papikian, M.}
title={On Garland’s vanishing theorem for $\textrm{SL}_n$},
journal = {European Journal of Mathematics},
year={2016},
volume ={2},
pages={pp 579--613}
}

\bib{RS07}{article}{
author={Radhakrishnan, J.},
author={Sudan, M.},
title={On Dinur’s Proof of the PCP Theorem},
journal = {Bulletin of th AMS},
year={2007},
volume ={44},
pages={pp 19--61}
}

\bib{sole}{article}{
    AUTHOR = {Sol\'{e}, Patrick},
     TITLE = {The second eigenvalue of regular graphs of given girth},
   JOURNAL = {J. Combin. Theory Ser. B},
  FJOURNAL = {Journal of Combinatorial Theory. Series B},
    VOLUME = {56},
      YEAR = {1992},
    NUMBER = {2},
     PAGES = {239--249},
      ISSN = {0095-8956},
   MRCLASS = {05C50 (05E35)},
  MRNUMBER = {1186757},
MRREVIEWER = {Bojan Mohar},
       URL = {https://doi.org/10.1016/0095-8956(92)90020-X},
}

 \bib{TensorProd}{article}{
 author = {Weichsel, P. M.},
 title = {The Kronecker product of graphs},
 journal = {Proceedings of the American Mathematical Society},
 year = {1962},
 volume = {13},
 pages = {47--52}
 
 }

\end{biblist}
\end{bibdiv}

\end{document}